\tikzset{
  schraffiert/.style={pattern=horizontal lines,pattern color=#1},
  schraffiert/.default=black
}
\newcommand\norm[2]{\left\Vert#1\right\Vert_{#2}}
\newcommand\abs[1]{\left\vert#1\right\vert}
\newcommand\N{\mathbb{N}}
\newcommand\R{\mathbb{R}}
\newcommand{\conv}{\operatorname{conv}}
\newcommand{\clconv}{\operatorname{\overline{\conv}}}
\renewcommand{\subseteq}{\subset}
\DeclareMathAlphabet{\mathpzc}{OT1}{pzc}{m}{it}
\newtheorem{theorem}{Theorem}[section]
\newtheorem{lemma}[theorem]{Lemma}
\newtheorem{proposition}[theorem]{Proposition}
\newtheorem{definition}[theorem]{Definition}
\newtheorem{example}[theorem]{Example}
\crefname{figure}{Figure}{Figures}
\begin{document}

\title{A comparison of first-order methods for the numerical solution of
	or-constrained optimization problems}
\author{%
	Patrick Mehlitz%
	\footnote{%
		Brandenburgische Technische Universität Cottbus--Senftenberg,
		Institute of Mathematics,
		03046 Cottbus,
		Germany,
		\email{mehlitz@b-tu.de},
		\url{https://www.b-tu.de/fg-optimale-steuerung/team/dr-patrick-mehlitz},
		ORCID: 0000-0002-9355-850X%
		}
	}

\publishers{}
\maketitle

\begin{abstract}
	 Mathematical programs with or-constraints form a new class of disjunctive optimization problems
	 with inherent practical relevance. In this paper, we provide a comparison of three different
	 first-order methods for the numerical treatment of this problem class which are inspired by
	 classical approaches from disjunctive programming.
	 First, we study the replacement of the or-constraints as nonlinear inequality constraints using
	 suitable NCP-functions. Second, we transfer the or-constrained program into a mathematical program
	 with switching or complementarity constraints which can be treated with the aid of well-known
	 relaxation methods. Third, a direct Scholtes-type relaxation of the or-constraints is investigated.
	 A numerical comparison of all these approaches which is based on three essentially different model
	 programs from or-constrained optimization closes the paper.
\end{abstract}

\begin{keywords}	
	 Disjunctive programming, Global convergence, NCP-functions, Or-constrained programming, Relaxation methods 
\end{keywords}

\begin{msc}	
	65K05, 90C30, 90C33
\end{msc}

\section{Introduction}\label{sec:introduction}

This paper is dedicated to the comparison of first-order methods for the numerical solution
of the or-constrained optimization problem
\begin{equation}\label{eq:MPOC}\tag{MPOC}
	\begin{aligned}
		f(x)&\,\rightarrow\,\min&&&\\
		g_i(x)&\,\leq\,0&\qquad&i\in\mathcal M&\\
		h_j(x)&\,=\,0&\qquad&j\in\mathcal P&\\
		G_l(x)\,\leq 0\,\lor\,H_l(x)&\,\leq\,0&\qquad&l\in\mathcal Q.&
	\end{aligned}
\end{equation}
Here, the functions 
$f,g_i,h_j,G_l,H_l\colon\R^n\to\R$  are assumed to be continuously differentiable for all 
$i\in\mathcal M:=\{1,\ldots,m\}$, 
$j\in\mathcal P:=\{1,\ldots,p\}$, and 
$l\in\mathcal Q:=\{1,\ldots,q\}$. 
For brevity, $g\colon\R^n\to\R^m$, $h\colon\R^n\to\R^p$, $G\colon\R^n\to\R^q$, and $H\colon\R^n\to\R^q$
are the mappings which possess the component functions 
$g_i$ ($i\in\mathcal M$), $h_j$ ($j\in\mathcal P$), $G_l$ ($l\in\mathcal Q$), 
and $H_l$ ($l\in\mathcal Q$), respectively.
Forthwith, the feasible set of \eqref{eq:MPOC}
will be denoted by $X\subset\R^n$.
Emphasizing that $\lor$ denotes the logical 'or',
the last $q$ constraints in \eqref{eq:MPOC} force $G_l(x)$ \emph{or} $H_l(x)$ to be less or equal to zero 
for all $l\in\mathcal Q$ whenever $x\in\R^n$ is feasible to \eqref{eq:MPOC}. Thus, we will refer to \eqref{eq:MPOC} as a 
\emph{mathematical program with or-constraints}. 

Clearly, \eqref{eq:MPOC} is an instance of logical mathematical programming, see e.g.\ \cite{Hooker2002} for an overview,
and covers several interesting applications e.g.\ from process engineering and scheduling, see 
\cite{Grossmann2002} and references therein. Particularly, or-constraints can be used to avoid
the formulation of so-called Big-$M$-constraints of type
\[
\begin{aligned}
	G_l(x)&\,\leq\, My_l&\quad&l\in\mathcal Q&\\
	H_l(x)&\,\leq\,M(1-y_l)&&l\in\mathcal Q&\\
	y_l&\,\in\,\{0,1\}&&l\in\mathcal Q&
\end{aligned}
\]
which would induce a mixed-integer-regime and the need for an a priori calculation of the constant $M>0$, see \cite{Mehlitz2019}.
Let us note that or-constrained programming with affine data functions is closely related to 
disjunctive programming in the sense of Balas, see \cite{Balas2018}, which means that a linear
function is minimized over the union of convex polyhedral sets. 
The situation where an arbitrary convex function
is minimized over the union of sets which are characterized via convex inequality constraints,
respectively, is discussed in \cite{GrossmannLee2003}. 
Using 
\begin{equation}\label{eq:def_O}
	O:=\{(a,b)\in\mathbb R^2\,|\,a\leq 0\,\lor\,b\leq 0\},
\end{equation} 
which can be written as a union of two 
convex polyhedral sets, \eqref{eq:MPOC} can be represented equivalently by
\[
	\begin{aligned}
		f(x)&\,\rightarrow\,\min&&&\\
		g_i(x)&\,\in\,\R^0_-&\qquad&i\in\mathcal M&\\
		h_j(x)&\,\in\,\{0\}&\qquad&j\in\mathcal P&\\
		(G_l(x),H_l(x))&\,\in\,O&\qquad&l\in\mathcal Q.&
	\end{aligned}
\]
which is a disjunctive program in the sense of \cite{BenkoGfrerer2018,FlegelKanzowOutrata2007}
where $\R^0_-$ denotes the set of all nonpositive real numbers.
As we will see later in \cref{sec:disjunctive_modification}, the model \eqref{eq:MPOC} is closely related to
so-called \emph{mathematical programs with switching constraints} (MPSCs), see \cite{KanzowMehlitzSteck2019,Mehlitz2019},
and \emph{mathematical programs with complementarity constraints} (MPCCs), see e.g.\
\cite{HoheiselKanzowSchwartz2013,LuoPangRalph1996,OutrataKocvaraZowe1998,Ye2005}.
First theoretical investigations which address the model \eqref{eq:MPOC} from the viewpoint of stationarity conditions
and constraint qualifications can be found in \cite[Section~7]{Mehlitz2019}.

This paper is devoted to the numerical treatment of \eqref{eq:MPOC}.
Here, we want to exploit three different ideas from disjunctive programming in order to develop numerical strategies 
for the computational solution of or-constrained programs.
We will focus our attention on the subsequently stated approaches:
\begin{itemize}
	\item reformulation of the or-constraints as nonlinear standard inequality constraints
		using so-called NCP-functions,
	\item reformulation of \eqref{eq:MPOC} as an MPSC or MPCC which can be tackled with the aid of
		relaxation methods from the literature, see e.g.\ \cite{KanzowMehlitzSteck2019, HoheiselKanzowSchwartz2013}, and
	\item direct relaxation of the or-constraints using a Scholtes-type method.
\end{itemize}
Here, we first study the individual qualitative properties of these methods before we provide a quantitative numerical
comparison based on different model problems from or-constrained optimization.

Originally, NCP-functions, where NCP abbreviates \emph{nonlinear complementarity program}, were introduced
in order to replace systems of complementarity constraints by nonlinear and possibly nonsmooth systems of equalities 
which can be solved e.g.\ by suitable Newton-type methods, see e.g.\ \cite{Fischer1992,Leyffer2006}. 
A satisfying overview of NCP-functions and their properties can be found in
\cite{Galantai2012,KanzowYamashitaFukushima1997,SunQi1999}. Here, we exploit the fact that some NCP-functions can be used to
replace or-constraints by nonlinear and possibly nonsmooth systems of inequalities.

As it turns out, one can transfer \eqref{eq:MPOC} into an MPSC or MPCC for the respective price of $2q$ slack variables.
It has been reported in \cite{Mehlitz2019} that this transformation generally comes along with additional local minimizers
of the surrogate MPSC and a similar behavior is at hand when the transformation into an MPCC is considered. 
Here, we additionally study the relationship of the stationary points associated with \eqref{eq:MPOC} and the stationary
points of the surrogate problem. As we will show, both transformations may induce additional stationary points which has
to be taken into account when this solution approach is exploited since first-order methods for the numerical solution
of MPSCs or MPCCs generally compute points satisfying certain problem-tailored stationarity conditions.

Noting that the variational geometry of $X$ is directly related to the variational geometry of the set $O$ from
\eqref{eq:def_O}, the irregularity of the kink in $O$ causes some irregularities in $X$ which may cause essential
trouble when \eqref{eq:MPOC} is solved numerically (e.g.\ via a direct treatment using suitable NCP-functions suggested above).
In order to overcome this potential issue, one could try to regularize this kink using a suitable relaxation approach. 
In the past, Scholtes' relaxation method has turned out to be a robust approach for the numerical solution of
MPCCs, MPSCs, and other models from disjunctive programming, see e.g.\ \cite{HoheiselKanzowSchwartz2013,KanzowMehlitzSteck2019,Scholtes2001}.
That is why we want to adapt it to the setting at hand. For this purpose, we suggest two different approaches based
on the smoothing of the popular Fischer--Burmeister function, see \cite{Fischer1992,Kanzow1996}, 
and a shifted version of the Kanzow--Schwartz function, see \cite{KanzowSchwartz2013}.

The remaining parts of the paper are organized as follows: In \cref{sec:notation}, we comment on the notation used
in the manuscript. Furthermore, we recall some basics from nonlinear programming as well as essential stationarity notions
and constraint qualifications for or-, switching-, and complementarity-constrained programming.
We study the reformulation of or-constraints with the aid of NCP-functions in \cref{sec:direct_treatment}. Additionally,
we investigate how the stationary points of \eqref{eq:MPOC} and its reformulation are related. As we will see, this
heavily depends on the choice of the underlying NCP-function. 
In \cref{sec:disjunctive_modification}, we present two reasonable reformulations of \eqref{eq:MPOC} as an MPSC as well as an
MPCC and study the relationship of the original problem and its surrogate w.r.t.\ minimizers and stationary points, respectively.
Direct Scholtes-type relaxation techniques associated with \eqref{eq:MPOC} which are based on the smoothed Fischer--Burmeister function
as well as the Kanzow--Schwartz function are the topic of \cref{sec:relaxation}. For both approaches, we study the
underlying convergence properties as well as the regularity of the appearing subproblems. 
In \cref{sec:numerics}, we present a quantitative comparison of all these methods based on three different models from
or-constrained programming, namely a nonlinear disjunctive program in the sense of Balas, see \cite{Balas2018}, an optimization
problem whose variables possess so-called gap domains, and an optimal control problem whose controls have to satisfy a 
pointwise or-constraint. Some concluding remarks close the paper in \cref{sec:concluding_remarks}.

\section{Notation and preliminaries}\label{sec:notation}

\subsection{Basic notation}

The subsequently stated tools from variational analysis can be found in 
\cite{Clarke1983,Mordukhovich2006,RockafellarWets1998}.

For a vector $x\in\R^n$, $\norm{x}{2}:=\sqrt{x\cdot x}$ is used to denote its Euclidean norm
where $\cdot$ represents the Euclidean product.
Choosing $\bar x\in\R^n$ and $\varepsilon>0$ arbitrarily, $\mathbb B^\varepsilon(\bar x)$ 
and $\mathbb S^\varepsilon(\bar x)$ denote the closed $\varepsilon$-ball and the
$\varepsilon$-sphere around $\bar x$, respectively.
Let $\mathtt e^n\in\R^n$ be the all-ones-vector, while for each $i\in\{1,\ldots,n\}$, 
$\mathtt e^n_i\in\R^n$ represents the $i$-th unit vector in $\R^n$.
Whenever $A\subset\R^n$ is a nonempty set and $\bar x\in A$ is chosen arbitrarily, then
the closed cone
\[
		\mathcal T_A(\bar x):=\left\{
				d\in\R^n\,\middle|\,
					\begin{aligned}
						&\exists\{x_k\}_{k\in\N}\subset A\,\exists\{\tau_k\}_{k\in\N}\subset\R_+\colon\\
						&\qquad x_k\to \bar x,\,\tau_k\downarrow 0,\,(x_k-\bar x)/\tau_k\to d
					\end{aligned}
			\right\}
\]
is called tangent or Bouligand cone to $A$ at $\bar x$ where $\R_+$ denotes the set of all positive real numbers.
Furthermore, the nonempty, closed, convex cone
\[
	A^\circ:=\{y\in\R^n\,|\,\forall x\in A\colon\, x\cdot y\leq 0\}
\]
is referred to as the polar cone of $A$. It is well known that for any two sets $B_1,B_2\subset\R^n$,
the polarization rule $(B_1\cup B_2)^\circ=B_1^\circ\cap B_2^\circ$ is valid.

Let $\{v^i\}_{i=1}^r,\{w^j\}_{j=1}^s\subset\R^n$ be two families of vectors. 
Then, $\{v^i\}_{i=1}^r\cup\{w^j\}_{j=1}^s$ is said to be positive-linearly dependent if there exist
scalars $\alpha_i\geq 0$ ($i=1,\ldots,r$) and $\beta_j$ ($j=1,\ldots,s$) which satisfy
\[
	0=\mathsmaller\sum\nolimits_{i=1}^r\alpha_iv^i+\mathsmaller\sum\nolimits_{j=1}^s\beta_jw^j
\]
and do not vanish at the same time. 
Supposing that such scalars do not exist, we call $\{v^i\}_{i=1}^r\cup\{w^j\}_{j=1}^s$
positive-linearly independent. Note that positive-linear independence is stable under small
perturbations, see \cite[Lemma~2.2]{KanzowMehlitzSteck2019}.

For a locally Lipschitz continuous functional $\psi\colon\R^n\to\R$ and some point $\bar x\in\R^n$, the (possibly empty)
set 
\[
	\partial^\textup{F}\psi(\bar x)
	:=
	\left\{
		y\in\R^n\,\middle|\,
			\liminf\limits_{x\to\bar x}
				\frac{\psi(x)-\psi(\bar x)-y\cdot(x-\bar x)}{\norm{x-\bar x}{2}}\geq 0
	\right\}
\]
is called Fr\'{e}chet (or regular) subdifferential of $\psi$ at $\bar x$. Based on that, one can define
the Mordukhovich (or basic) subdifferential of $\psi$ at $\bar x$ by
\[
	\partial^\textup{M}\psi(\bar x)
	:=
	\left\{
		y\in\R^n\,\middle|\,
			\begin{aligned}
			&\exists\{x_k\}_{k\in\N}\subset\R^n\,\exists\{y_k\}_{k\in\N}\subset\R^n\colon\\
			&\qquad	x_k\to\bar x,\,y_k\to y,\,y_k\in\partial^\textup{F}\psi(x_k)\,\forall k\in\N
			\end{aligned}
	\right\}.
\]
Finally, the Clarke (or convexified) subdifferential of $\psi$ at $\bar x$ is given by
\[
	\partial^\textup{C}\psi(\bar x):=\clconv\partial^\textup{M}\psi(\bar x)
\]
where $\clconv A$ denotes the closed, convex hull of $A\subset\R^n$.
By construction, we have $\partial^\textup F\psi(\bar x)\subset\partial^\textup{M}\psi(\bar x)\subset\partial^\textup{C}\psi(\bar x)$
and all these sets coincide with the singleton comprising only the gradient of $\psi$ at $\bar x$ whenever
$\psi$ is continuously differentiable at $\bar x$.

\subsection{Preliminaries from nonlinear programming}

Here, we briefly recall basic constraint qualifications from nonlinear programming which can be found in \cite{BazaraaSheraliShetty1993}.
Therefore, let us consider the nonlinear program
\begin{equation}\label{eq:NLP}\tag{NLP}
	\begin{aligned}
		f(x)&\,\to\,\min&&&\\
		g_i(x)&\,\leq\,0&\qquad&i\in\mathcal M&\\
		h_j(x)&\,=\,0&&j\in\mathcal P,&
	\end{aligned}
\end{equation}
i.e.\ we leave the or-constraints in \eqref{eq:MPOC} out of our consideration for a moment.
Let $\tilde X\subset\R^n$ be the feasible set of \eqref{eq:NLP} and fix some point $\bar x\in \tilde X$.
Frequently, we will use the index set of active inequality constraints given by
\[
	I^g(\bar x):=\{i\in\mathcal M\,|\,g_i(\bar x)=0\}.
\]
The linearization cone to $\tilde X$ at $\bar x$ is given by
\[
	\mathcal L_{\tilde X}(\bar x):=
		\left\{
			d\in\R^n\,\middle|\,
				\begin{aligned}
					\nabla g_i(\bar x)\cdot d&\,\leq\,0&&i\in I^g(\bar x)\\
					\nabla h_j(\bar x)\cdot d&\,=\,0&&j\in\mathcal P
				\end{aligned}
		\right\}
\]
and its is well known that $\mathcal T_{\tilde X}(\bar x)\subset\mathcal L_{\tilde X}(\bar x)$ is valid
while the converse inclusion only holds true under validity of a constraint qualification in general.
Let us note that the polar cone of $\mathcal L_{\tilde X}(\bar x)$ is given by
\[
	\mathcal L_{\tilde X}(\bar x)^\circ
	=
	\left\{
		\mathsmaller\sum\nolimits_{i\in I^g(\bar x)}\lambda_i\nabla g_i(\bar x)
		+\mathsmaller\sum\nolimits_{j\in\mathcal P}\rho_j\nabla h_j(\bar x)\,\middle|\,
			\forall i\in I^g(\bar x)\colon\,\lambda_i\geq 0
	\right\}.
\]
Now, recall that LICQ (MFCQ), the \emph{linear independence constraint qualification} 
(the \emph{Mangasarian--Fromovitz constraint qualification}),
holds true for \eqref{eq:NLP} at $\bar x$ whenever the vectors from
\[
	\{\nabla g_i(\bar x)\,|\,i\in I^g(\bar x)\}\cup\{\nabla h_j(\bar x)\,|\,j\in\mathcal P\}
\]
are linearly independent (positive-linearly independent). Furthermore, GCQ, the \emph{Guignard constraint qualification},
is valid at $\bar x$ whenever $\mathcal T_{\tilde X}(\bar x)^\circ=\mathcal L_{\tilde X}(\bar x)^\circ$ holds true.
Clearly, we have
\[
	\textup{LICQ}\quad\Longrightarrow\quad\text{MFCQ}\quad\Longrightarrow\quad\text{GCQ}
\]
and the validity of any of these constraint qualifications at a local minimizer $\bar x$ of \eqref{eq:NLP} implies
that the latter is a Karush--Kuhn--Tucker (KKT) point of \eqref{eq:NLP}, i.e.\ there are multipliers
$\lambda_i\geq 0$ ($i\in I^g(\bar x)$) and $\rho_j$ ($j\in\mathcal P$) which satisfy
\[
	0=\nabla f(\bar x)+\sum\limits_{i\in I^g(\bar x)}\lambda_i\nabla g_i(\bar x)
		+\sum\limits_{j\in\mathcal P}\rho_j\nabla h_j(\bar x).
\]
Let us briefly mention that nonsmooth multiplier rules of KKT-type for the problem \eqref{eq:NLP}
with locally Lipschitz continuous but not necessarily differentiable data functions
which are stated in terms of Mordukhovich's or Clarke's subdifferential can be found in 
\cite{Mordukhovich2006} and \cite{Vinter2000}, respectively.

\subsection{Preliminaries from disjunctive programming}

In this section, we briefly recall some stationarity conditions and constraint qualifications for
three classes of disjunctive programs namely MPOCs, MPSCs, and MPCCs.
\subsubsection{Mathematical programs with or-constraints}

Problems of type \eqref{eq:MPOC} were considered from the viewpoint of disjunctive programming
in \cite[Section~7]{Mehlitz2019} first. In the latter paper, the author introduced reasonable
stationarity notions and constraint qualifications for \eqref{eq:MPOC}.

Let us fix a feasible point $\bar x\in X$ of \eqref{eq:MPOC}. 
Frequently, we will make use of the index sets defined below:
\begin{align*}
	I^{-0}(\bar x)&:=\{l\in\mathcal Q\,|\,G_l(\bar x)<0\land H_l(\bar x)=0\}&
	I^{0-}(\bar x)&:=\{l\in\mathcal Q\,|\,G_l(\bar x)=0 \land H_l(\bar x)<0\}&\\
	I^{-+}(\bar x)&:=\{l\in\mathcal Q\,|\,G_l(\bar x)<0 \land H_l(\bar x)>0\}&
	I^{+-}(\bar x)&:=\{l\in\mathcal Q\,|\,G_l(\bar x)>0 \land H_l(\bar x)<0\}&\\
	I^{0+}(\bar x)&:=\{l\in\mathcal Q\,|\,G_l(\bar x)=0 \land H_l(\bar x)>0\}&
	I^{+0}(\bar x)&:=\{l\in\mathcal Q\,|\,G_l(\bar x)>0 \land H_l(\bar x)=0\}&\\
	I^{--}(\bar x)&:=\{l\in\mathcal Q\,|\,G_l(\bar x)<0 \land H_l(\bar x)<0\}&
	I^{00}(\bar x)&:=\{l\in\mathcal Q\,|\,G_l(\bar x)=0 \land H_l(\bar x)=0\}.&
\end{align*}
Clearly, these index sets provide a disjoint partition of $\mathcal Q$.
Furthermore, we set
\begin{equation}\label{eq:def_I}
	\mathcal I(\bar x):=I^{0+}(\bar x)\cup I^{+0}(\bar x)\cup I^{00}(\bar x)
\end{equation}
for brevity. Now, we are in position to state definitions of MPOC-tailored stationarity
concepts.
\begin{definition}\label{def:stationarities:MPOC}
	Let $\bar x\in X$ be a feasible point of \eqref{eq:MPOC}.
	Then, $\bar x$ is said to be
	\begin{enumerate}
		\item \emph{weakly stationary} (W-stationary) for \eqref{eq:MPOC} whenever there exist multipliers
		$\lambda_i\geq 0$ ($i\in I^g(\bar x)$), $\rho_j$ ($j\in\mathcal P$),
		$\mu_l\geq 0$ ($l\in I^{0+}(\bar x)\cup I^{00}(\bar x)$), and 
		$\nu_l\geq 0$ ($l\in I^{+0}(\bar x)\cup I^{00}(\bar x)$) which satisfy
		\begin{equation}\label{eq:WSt}
			\begin{split}
				&0=\nabla f(\bar x)
					+\sum\limits_{i\in I^g(\bar x)}\lambda_i\nabla g_i(\bar x)
					+\sum\limits_{j\in\mathcal P}\rho_j\nabla h_j(\bar x)\\
				&\qquad 
					+\sum\limits_{l\in I^{0+}(\bar x)\cup I^{00}(\bar x)}\mu_l\nabla G_l(\bar x)
					+\sum\limits_{l\in I^{+0}(\bar x)\cup I^{00}(\bar x)}\nu_l\nabla H_l(\bar x),	
			\end{split}
		\end{equation}
		\item \emph{Mordukhovich-stationary} (M-stationary) for \eqref{eq:MPOC} whenever it is W-stationary
		while the associated multipliers additionally satisfy
		\[
			\forall l\in I^{00}(\bar x)\colon\quad \mu_l\nu_l=0,
		\]
		\item \emph{strongly stationary} (S-stationary) for \eqref{eq:MPOC} whenever it is W-stationary 
		while the associated multipliers additionally satisfy
		\[
			\forall l\in I^{00}(\bar x)\colon\quad \mu_l=0\,\land\,\nu_l=0.
		\]
	\end{enumerate}
\end{definition}

By definition, we obtain the relations
\[
	\text{S-stationarity}\quad\Longrightarrow\quad\text{M-stationarity}\quad
	\Longrightarrow\quad\text{W-stationarity}
\]
between these stationarity notions which are visualized in \cref{fig:St_MPOC}.
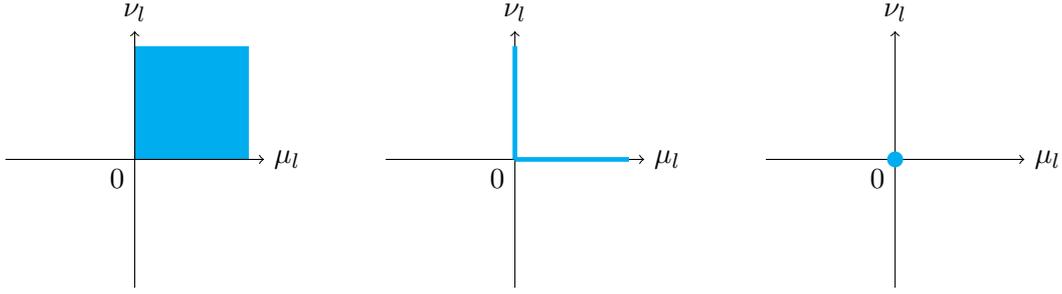
\begin{figure}[h]\centering
  \begin{tikzpicture}
  \fill[color=cyan] (0,1.5)--(1.5,1.5)--(1.5,0)--(0,0);  
  \draw[->] (-1.7,0) -- (1.7,0) node[right] {$\mu_l$};
  \draw[->] (0,-1.7) -- (0,1.7) node[above] {$\nu_l$};
  \node[below left] at (0,0){$0$};
  \draw[->] (3.3,0) -- (6.7,0) node[right] {$\mu_l$};
  \draw[->] (5,-1.7) -- (5,1.7) node[above] {$\nu_l$};
  \draw[very thick, color = cyan] (5,0.01) -- (6.5,0.01);
  \draw[very thick, color = cyan] (5,-0.01) -- (6.5,-0.01);
  \draw[very thick, color = cyan] (5.01,0) -- (5.01,1.5);
  \draw[very thick, color = cyan] (4.99,0) -- (4.99,1.5);
  \node[below left] at (5,0){$0$}; 
  \draw[->] (8.3,0) -- (11.7,0) node[right] {$\mu_l$};
  \draw[->] (10.0,-1.7) -- (10.0,1.7) node[above] {$\nu_l$};  
  \fill[color=cyan] (10,0) circle [radius=3pt];
  \node[below left] at (10,0){$0$};
\end{tikzpicture}
\caption{Geometric visualizations of W-, M-, and S-stationarity for 
	the program \eqref{eq:MPOC} w.r.t.\ an index $l\in I^{00}(\bar x)$.}
\label{fig:St_MPOC}
\end{figure}
We say that MPOC-LICQ (MPOC-MFCQ) is valid at a feasible point $\bar x\in X$ of
\eqref{eq:MPOC} whenever the gradients from
\[
	\begin{split}
		\Bigl[\{\nabla g_i(\bar x)\,|\,i\in I^g(\bar x)\}
			&\cup\{\nabla G_l(\bar x)\,|\,l\in I^{0+}(\bar x)\cup I^{00}(\bar x)\}\\
			&\cup\{\nabla H_l(\bar x)\,|\,l\in I^{+0}(\bar x)\cup I^{00}(\bar x)\}
		\Bigr]\cup\{\nabla h_j(\bar x)\,|\,j\in\mathcal P\}
	\end{split}
\]
are linearly independent (positive-linearly independent).
It has been shown in \cite[Theorem~7.1]{Mehlitz2019} that a local minimizer
of \eqref{eq:MPOC} where MPOC-LICQ holds is an S-stationary point.
By means of standard arguments, it is easy to confirm that the validity of
MPOC-MFCQ only yields M-stationarity of local minimizers of \eqref{eq:MPOC}
in general.

\subsubsection{Mathematical programs with switching constraints}

Let us consider the mathematical program
\begin{equation}\label{eq:MPSC}\tag{MPSC}
	\begin{aligned}
		f(x)&\,\to\,\min&&&\\
		g_i(x)&\,\leq\,0&\qquad&i\in\mathcal M&\\
		h_j(x)&\,=\,0&&j\in\mathcal P&\\
		\tilde G_l(x)\tilde H_l(x)&\,=\,0&&l\in\mathcal Q&
	\end{aligned}
\end{equation}
where $\tilde G_l,\tilde H_l\colon\R^n\to\R$ ($l\in\mathcal Q$)
are continuously differentiable functions.
For later use, let $\tilde G,\tilde H\colon\R^n\to\R^q$ be the maps
possessing the component functions $\tilde G_l$ ($l\in\mathcal Q$) and
$\tilde H_l$ ($l\in\mathcal Q$), respectively.
Note that \eqref{eq:MPSC} results from \eqref{eq:MPOC} by replacing
the $q$ or-constraints by $q$ so-called switching constraints.
That is why we refer to \eqref{eq:MPSC} as a \emph{mathematical
program with switching constraints}. Theoretical and numerical investigations
which address this problem class as well as an overview of underlying 
applications can be found in the recent papers
\cite{KanzowMehlitzSteck2019,Mehlitz2019}.

Let $X_\textup{SC}\subset\R^n$ be the feasible set of \eqref{eq:MPSC}
and fix some point $\bar x\in X_\textup{SC}$. We define 
\begin{align*}
	I^{\tilde G}(\bar x)&:=\{l\in\mathcal Q\,|\,\tilde G_l(\bar x)=0\,\land\,\tilde H_l(\bar x)\neq 0\},\\
	I^{\tilde H}(\bar x)&:=\{l\in\mathcal Q\,|\,\tilde G_l(\bar x)\neq 0\,\land\,\tilde H_l(\bar x)= 0\},\\
	I^{\tilde G\tilde H}(\bar x)&:=\{l\in\mathcal Q\,|\,\tilde G_l(\bar x)=0\,\land\,\tilde H_l(\bar x)= 0\}.
\end{align*}
Clearly, these sets provide a disjoint partition of $\mathcal Q$ and allow
us to state suitable problem-tailored stationarity notions for \eqref{eq:MPSC}.
\begin{definition}\label{def:St_MPSC}
	Let $\bar x\in X_\textup{SC}$ be a feasible point of \eqref{eq:MPSC}.
	Then, $\bar x$ is said to be
	\begin{enumerate}
	\item \emph{weakly stationary} (W$_\textup{SC}$-stationary) for 
	\eqref{eq:MPSC} whenever there exist multipliers $\lambda_i\geq 0$
	($i\in I^g(\bar x)$), $\rho_j$ ($j\in\mathcal P$),
	$\tilde \mu_l$ ($l\in I^{\tilde G}(\bar x)\cup I^{\tilde G\tilde H}(\bar x)$), and
	$\tilde \nu_l$ ($l\in I^{\tilde H}(\bar x)\cup I^{\tilde G\tilde H}(\bar x)$) which satisfy
	\[
		\begin{split}
			&0=\nabla f(\bar x)
				+\sum\limits_{i\in I^g(\bar x)}\lambda_i\nabla g_i(\bar x)
				+\sum\limits_{j\in\mathcal P}\rho_j\nabla h_j(\bar x)\\
			&\qquad
				+\sum\limits_{l\in I^{\tilde G}(\bar x)\cup I^{\tilde G\tilde H}(\bar x)}
					\tilde \mu_l\nabla \tilde G_l(\bar x)
				+\sum\limits_{l\in I^{\tilde H}(\bar x)\cup I^{\tilde G\tilde H}(\bar x)}
					\tilde \nu_l\nabla \tilde H_l(\bar x),
		\end{split}
	\]
	\item \emph{Mordukhovich-stationary} (M$_\textup{SC}$-stationary) for \eqref{eq:MPSC} whenever
	it is W$_\textup{SC}$-stationary while the associated multipliers additionally satisfy
	\[
		\forall l\in I^{\tilde G\tilde H}(\bar x)\colon\quad \tilde \mu_l\tilde \nu_l=0,
	\]
	\item \emph{strongly stationary} (S$_\textup{SC}$-stationary) for \eqref{eq:MPSC} whenever
	it is W$_\textup{SC}$-stationary while the associated multipliers additionally satisfy
	\[
		\forall l\in I^{\tilde G\tilde H}(\bar x)\colon\quad \tilde \mu_l=0\,\land\,\tilde \nu_l=0.
	\]
	\end{enumerate}
\end{definition}

Again, we obtain the relations
\[
	\text{S$_\textup{SC}$-stationary}\quad\Longrightarrow\quad
	\text{M$_\textup{SC}$-stationary}\quad\Longrightarrow\quad
	\text{W$_\textup{SC}$-stationary}
\]
by definition of these stationarity concepts. 
In order to avoid confusion, we used the index SC to emphasize that the above stationarity notions
address \eqref{eq:MPSC} although this is also quite clear from the context.
It should be noted that S$_\textup{SC}$-stationarity is equivalent to the KKT conditions
of \eqref{eq:MPSC} where the switching constraints are interpreted as simple equality constraints.
A visualization of these stationarity concepts is provided in \cref{fig:St_MPSC}.
\begin{figure}[h]\centering
  \begin{tikzpicture}
  \fill[color=cyan] (-1.5,-1.5)--(-1.5,1.5)--(1.5,1.5)--(1.5,-1.5);  
  \draw[->] (-1.7,0) -- (1.7,0) node[right] {$\tilde \mu_l$};
  \draw[->] (0,-1.7) -- (0,1.7) node[above] {$\tilde \nu_l$};
  \node[below left] at (0,0){$0$};
  \draw[->] (3.3,0) -- (6.7,0) node[right] {$\tilde \mu_l$};
  \draw[->] (5,-1.7) -- (5,1.7) node[above] {$\tilde \nu_l$};
  \draw[very thick, color = cyan] (3.5,0.01) -- (6.5,0.01);
  \draw[very thick, color = cyan] (3.5,-0.01) -- (6.5,-0.01);
  \draw[very thick, color = cyan] (5.01,-1.5) -- (5.01,1.5);
  \draw[very thick, color = cyan] (4.99,-1.5) -- (4.99,1.5);
  \node[below left] at (5,0){$0$}; 
  \draw[->] (8.3,0) -- (11.7,0) node[right] {$\tilde \mu_l$};
  \draw[->] (10.0,-1.7) -- (10.0,1.7) node[above] {$\tilde \nu_l$};  
  \fill[color=cyan] (10,0) circle [radius=3pt];
  \node[below left] at (10,0){$0$};
\end{tikzpicture}
\caption{Geometric visualizations of W$_\textup{SC}$-, M$_\textup{SC}$-, and S$_\textup{SC}$-stationarity for 
	the program \eqref{eq:MPSC} w.r.t.\ an index $l\in I^{\tilde G\tilde H}(\bar x)$.}
\label{fig:St_MPSC}
\end{figure}
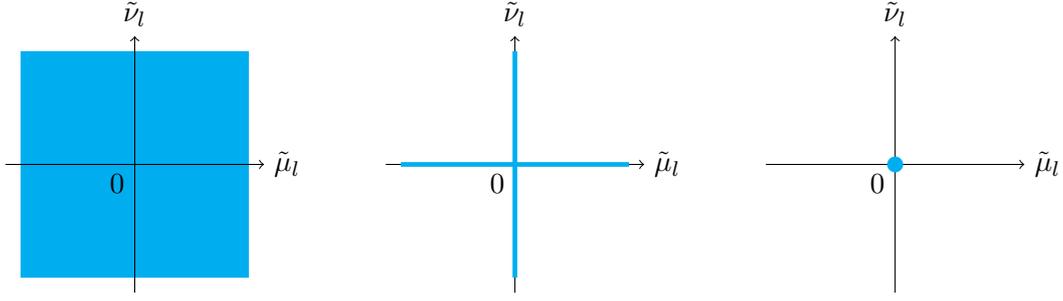

\subsubsection{Mathematical programs with complementarity constraints}

Finally, we would like to mention so-called \emph{mathematical programs with complementarity constraints}
which are optimization problems of the following type:
\begin{equation}\label{eq:MPCC}\tag{MPCC}
	\begin{aligned}
		f(x)&\,\to\,\min&&&\\
		g_i(x)&\,\leq\,0&\qquad&i\in\mathcal M&\\
		h_j(x)&\,=\,0&&j\in\mathcal P&\\
		0\,\leq\,\bar G_l(x)\,\perp\,\bar H_l(x)&\,\geq\,0&&l\in\mathcal Q.&
	\end{aligned}
\end{equation}
Therein, the last $q$ constraints, which are stated w.r.t.\ continuously differentiable functions
$\bar G,\bar H\colon\R^n\to\R^q$ whose components will be addressed by $\bar G_1,\ldots,\bar G_q\colon\R^n\to\R$ and
$\bar H_1,\ldots,\bar H_q\colon\R^n\to\R$, respectively, induce a complementarity regime
since they demand that for each $l\in\mathcal Q$, $G_l(x)$ and $H_l(x)$ are nonnegative while at least one of
those numbers needs to vanish for each feasible point $x\in\R^n$ of \eqref{eq:MPCC}.
During the last decades, complementarity-constrained optimization has been considered eagerly from a
theoretical and numerical point of view due to numerous underlying applications, see e.g.\ 
\cite{LuoPangRalph1996,OutrataKocvaraZowe1998}.

We exploit $X_\textup{CC}\subset\R^n$ in order to denote the feasible set of \eqref{eq:MPCC}.
Let us fix a feasible point $\bar x\in X_\textup{CC}$. Then, the index sets
\begin{align*}
	I^{0+}_\textup{CC}(\bar x)&:=
		\{l\in\mathcal Q\,|\,\bar G_l(\bar x)=0\,\land\,\bar H_l(\bar x)>0\},\\
	I^{+0}_\textup{CC}(\bar x)&:=
		\{l\in\mathcal Q\,|\,\bar G_l(\bar x)>0\,\land\,\bar H_l(\bar x)=0\},\\
	I^{00}_\textup{CC}(\bar x)&:=
		\{l\in\mathcal Q\,|\,\bar G_l(\bar x)=0\,\land\,\bar H_l(\bar x)=0\}
\end{align*}
provide a disjoint partition of $\mathcal Q$. Note that we used the index CC in order to
distinguish the above index sets from their respective counterparts which are related to
\eqref{eq:MPOC}. Next, we recall some stationarity notions from complementarity-constrained
programming, see e.g.\ \cite{Ye2005}. Again, we use the index CC in order to emphasize
that the stationarity notions of interest are related to \eqref{eq:MPCC}.
\begin{definition}\label{def:St_MPCC}
	Let $\bar x\in X_\textup{CC}$ be a feasible point of \eqref{eq:MPCC}.
	Then, $\bar x$ is said to be
	\begin{enumerate}
	\item \emph{weakly stationary} (W$_\textup{CC}$-stationary) for 
	\eqref{eq:MPCC} whenever there exist multipliers $\lambda_i\geq 0$
	($i\in I^g(\bar x)$), $\rho_j$ ($j\in\mathcal P$),
	$\bar \mu_l$ ($l\in I^{0+}_\textup{CC}(\bar x)\cup I^{00}_\textup{CC}(\bar x)$), and
	$\bar \nu_l$ ($l\in I^{+0}_\textup{CC}(\bar x)\cup I^{00}_\textup{CC}(\bar x)$) which satisfy
	\[
		\begin{split}
			&0=\nabla f(\bar x)
				+\sum\limits_{i\in I^g(\bar x)}\lambda_i\nabla g_i(\bar x)
				+\sum\limits_{j\in\mathcal P}\rho_j\nabla h_j(\bar x)\\
			&\qquad
				-\sum\limits_{l\in I^{0+}_\textup{CC}(\bar x)\cup I^{00}_\textup{CC}(\bar x)}
					\bar \mu_l\nabla \bar G_l(\bar x)
				-\sum\limits_{l\in I^{+0}_\textup{CC}(\bar x)\cup I^{00}_\textup{CC}(\bar x)}
					\bar \nu_l\nabla \bar H_l(\bar x),
		\end{split}
	\]
	\item \emph{Clarke-stationary} (C$_\textup{CC}$-stationary) for \eqref{eq:MPCC} whenever
	it is W$_\textup{CC}$-stationary while the associated multipliers additionally satisfy
	\[
		\forall l\in I^{00}_\textup{CC}(\bar x)\colon\quad \bar\mu_l\bar\nu_l\geq 0,
	\]
	\item \emph{Mordukhovich-stationary} (M$_\textup{CC}$-stationary) for \eqref{eq:MPCC} whenever
	it is W$_\textup{CC}$-stationary while the associated multipliers additionally satisfy
	\[
		\forall l\in I^{00}_\textup{CC}(\bar x)\colon\quad \bar \mu_l\bar \nu_l=0\,\lor\,(\bar\mu_l>0\,\land\,\bar\nu_l>0),
	\]
	\item \emph{strongly stationary} (S$_\textup{CC}$-stationary) for \eqref{eq:MPCC} whenever
	it is W$_\textup{CC}$-stationary while the associated multipliers additionally satisfy
	\[
		\forall l\in I^{00}_\textup{CC}(\bar x)\colon\quad \bar \mu_l\geq 0\,\land\,\bar \nu_l\geq 0.
	\]
	\end{enumerate}
\end{definition}

By definition, we have
\[	\begin{split}
	\text{S$_\textup{CC}$-stationary}\quad&\Longrightarrow\quad
	\text{M$_\textup{CC}$-stationary}\\ &\Longrightarrow\quad
	\text{C$_\textup{CC}$-stationary}\quad\Longrightarrow\quad
	\text{W$_\textup{CC}$-stationary}.
	\end{split}
\]
Furthermore, one can check that the S$_\textup{CC}$-stationarity conditions of \eqref{eq:MPCC}
are equivalent to the KKT conditions of the equivalent NLP-model associated with \eqref{eq:MPCC}
where the complementarity constraints are restated as
\[
	\begin{aligned}
	\bar G_l(x)&\,\geq\, 0&\qquad&l\in\mathcal Q&\\
	\bar H_l(x)&\,\geq\, 0&&l\in\mathcal Q&\\
	\bar G(x)\cdot \bar H(x)&\,=\,0.&&&
	\end{aligned}
\]
All introduced stationarity notions are visualized in \cref{fig:St_MPCC}.
\begin{figure}[h]\centering
\scalebox{.75}{  \begin{tikzpicture}
  \fill[color=cyan] (-6.5,-1.5)--(-6.5,1.5)--(-3.5,1.5)--(-3.5,-1.5); 
  \draw[->] (-6.7,0) -- (-3.3,0) node[right] {$\bar \mu_l$};
  \draw[->] (-5,-1.7) -- (-5,1.7) node[above] {$\bar \nu_l$};
  \node[below left] at (-5,0){$0$};
  \fill[color=cyan] (0,1.5)--(1.5,1.5)--(1.5,0)--(0,0); 
  \fill[color=cyan] (0,0)--(-1.5,0)--(-1.5,-1.5)--(0,-1.5); 
  \draw[->] (-1.7,0) -- (1.7,0) node[right] {$\bar \mu_l$};
  \draw[->] (0,-1.7) -- (0,1.7) node[above] {$\bar \nu_l$};
  \node[below left] at (0,0){$0$};
  \fill[color=cyan] (5,0)--(6.5,0)--(6.5,1.5)--(5,1.5);
  \draw[->] (3.3,0) -- (6.7,0) node[right] {$\bar \mu_l$};
  \draw[->] (5,-1.7) -- (5,1.7) node[above] {$\bar \nu_l$};
  \draw[very thick, color = cyan] (3.5,0.01) -- (5,0.01);
  \draw[very thick, color = cyan] (3.5,-0.01) -- (5,-0.01);
  \draw[very thick, color = cyan] (5.01,-1.5) -- (5.01,0);
  \draw[very thick, color = cyan] (4.99,-1.5) -- (4.99,0);
  \node[below left] at (5,0){$0$};
  \fill[color=cyan] (10,0)--(11.5,0)--(11.5,1.5)--(10,1.5); 
  \draw[->] (8.3,0) -- (11.7,0) node[right] {$\bar \mu_l$};
  \draw[->] (10.0,-1.7) -- (10.0,1.7) node[above] {$\bar \nu_l$};  
  \node[below left] at (10,0){$0$};
\end{tikzpicture}}
\caption{Geometric visualizations of W$_\textup{CC}$-, C$_\textup{CC}$-, M$_\textup{CC}$-, and S$_\textup{CC}$-stationarity for 
	the program \eqref{eq:MPCC} w.r.t.\ an index $l\in I^{00}_\textup{CC}(\bar x)$.}
\label{fig:St_MPCC}
\end{figure}
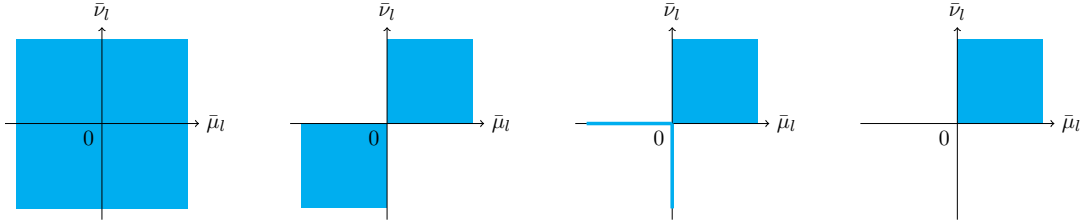

\section{Reformulation of or-constraints using NCP-functions}\label{sec:direct_treatment}

A continuous function $\varphi\colon\R^2\to\R$ which satisfies
\[
	\forall (a,b)\in\mathbb R^2\colon\quad
	\varphi(a,b)=0\,\Longleftrightarrow\,a,b\geq 0\,\land\,ab=0
\]
is referred to as NCP-function. 
By definition, NCP-functions can be used to reformulate complementarity systems as 
(possibly nonsmooth) equalities which is beneficial since the transformed system
can be tackled numerically with the aid of (semismooth) Newton or SQP methods, see e.g.\
\cite{Leyffer2006}.

Clearly, the zero level set of an NCP-function precisely equals the complementarity set
\begin{equation}\label{eq:def_compl_set}
	C:=\{(a,b)\in\R^2\,|\,a,b\geq 0\,\land\,ab=0\}.
\end{equation}
Defining the sets
\[
	A:=\{(a,b)\in\R^2\,|\,a>0\,\land\,b>0\},\qquad
	B:=\{(a,b)\in\R^2\,|\,a<0\,\lor\,b<0\},
\]
Bolzano's theorem yields that each NCP-function $\varphi\colon\R^2\to\R$ has 
precisely one of the following properties:
\begin{enumerate}[leftmargin=6em]
	\item[\textbf{NCP1}:] $\forall (a,b)\in A\cup B\colon\,\varphi(a,b)>0$,
	\item[\textbf{NCP2}:] $\forall (a,b)\in A\cup B\colon\,\varphi(a,b)<0$,
	\item[\textbf{NCP3}:] $\forall (a,b)\in A\colon\,\varphi(a,b)<0\,\land\,\forall (a,b)\in B\colon\,\varphi(a,b)>0$,
	\item[\textbf{NCP4}:] $\forall (a,b)\in A\colon\,\varphi(a,b)>0\,\land\,\forall (a,b)\in B\colon\,\varphi(a,b)<0$.
\end{enumerate}
This already has been mentioned in \cite[Corollary~1]{Galantai2012}.
Noting that $B\cup C=O$ holds for the set $O$ defined in \eqref{eq:def_O},
any NCP-function of type \textbf{NCP4} possesses the zero sublevel set $O$.
Particularly, for any such NCP-function $\varphi\colon\R^2\to\R$, we have
the relation
\[
	\forall (a,b)\in\R^2\colon\quad
	\varphi(a,b)\leq 0\,\Longleftrightarrow\,a\leq 0\,\lor\,b\leq 0\,\Longleftrightarrow\,(a,b)\in O.
\]
This observation yields the following definition.
\begin{definition}\label{def:NCP_OR}
	An NCP-function $\varphi\colon\R^2\to\R$ is said to be \emph{or-compatible} if it
	possesses property \textup{\textbf{NCP4}}.
\end{definition}

Clearly, if $\varphi\colon\R^2\to\R$ is an NCP-function possessing property \textbf{NCP3}, then
$-\varphi$ is an or-compatible NCP-function.
Below, we list three popular NCP-functions which are or-compatible while noting that there exist many
more examples:
\begin{itemize}
	\item the minimum function $\varphi_\textup{min}\colon\R^2\to\R$ given by
		\[
			\forall (a,b)\in\R^2\colon\quad
			\varphi_\textup{min}(a,b):=\min\{a;b\},
		\]
	\item the Fischer--Burmeister-type function $\varphi_\textup{FB}\colon\R^2\to\R$ defined via
		\[
			\forall (a,b)\in\R^2\colon\quad
			\varphi_\textup{FB}(a,b):=a+b-\sqrt{a^2+b^2}
		\]
		which dates back to \cite{Fischer1992}, and
	\item the Kanzow--Schwartz function $\varphi_\textup{KS}\colon\R^2\to\R$ from \cite{KanzowSchwartz2013}
		which is defined by
		\[
			\forall (a,b)\in\R^2\colon\quad
			\varphi_\textup{KS}(a,b):=
				\begin{cases}
					ab	&\text{if }a+b\geq 0,\\
					-\tfrac12(a^2+b^2)	&\text{if }a+b<0.
				\end{cases}
		\]
\end{itemize}
Obviously, $\varphi_\textup{min}$ is nonsmooth at all points from $\{(a,a)\in\R^2\,|\,a\in\R\}$ while
$\varphi_\textup{FB}$ is nonsmooth only at the origin. By construction, the function $\varphi_\textup{KS}$ 
is continuously differentiable, see \cite[Lemma~3.1]{KanzowSchwartz2013}, which makes it rather attractive 
in comparison to other NCP-functions.

For an arbitrary or-compatible NCP-function $\varphi\colon\R^2\to\R$, we now consider the surrogate 
\begin{equation}\label{eq:MPOC_NCP}\tag{MPOC$(\varphi)$}
	\begin{aligned}
		f(x)&\,\rightarrow\,\min&&&\\
		g_i(x)&\,\leq\,0&\qquad&i\in\mathcal M&\\
		h_j(x)&\,=\,0&\qquad&j\in\mathcal P&\\
		\varphi(G_l(x),H_l(x))&\,\leq\,0&\qquad&l\in\mathcal Q&
	\end{aligned}
\end{equation}
which is equivalent to \eqref{eq:MPOC}. Let $\bar x\in\R^n$ be an arbitrary feasible point of
\eqref{eq:MPOC} and, thus, of \eqref{eq:MPOC_NCP}. We set
\[
	I^\varphi(\bar x):=\{l\in\mathcal Q\,|\,\varphi(G_l(\bar x),H_l(\bar x))=0\}.
\]
Since $\varphi$ is or-compatible, we have $I^\varphi(\bar x)=\mathcal I(\bar x)$
for the set $\mathcal I(\bar x)$ defined in \eqref{eq:def_I}.
We now study the relationship between programs \eqref{eq:MPOC} and \eqref{eq:MPOC_NCP}
w.r.t.\ stationary points. 

Noting that \hyperref[eq:MPOC_NCP]{\textup{MPOC}$(\varphi_\textup{KS})$}
is a smooth program, we first investigate this particular model.
\begin{proposition}\label{prop:MPOC_KS}
	A feasible point $\bar x\in X$ of \eqref{eq:MPOC} is S-stationary if and only if it
	is a KKT point of \hyperref[eq:MPOC_NCP]{\textup{MPOC}$(\varphi_\textup{KS})$}.
\end{proposition}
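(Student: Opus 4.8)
The plan is to establish the equivalence by computing the gradient of the smooth function $\varphi_\textup{KS}(G_l(x),H_l(x))$ at the feasible point $\bar x$ on each of the three active index sets comprising $\mathcal I(\bar x)=I^{0+}(\bar x)\cup I^{+0}(\bar x)\cup I^{00}(\bar x)$, and then to match the resulting KKT system of \hyperref[eq:MPOC_NCP]{\textup{MPOC}$(\varphi_\textup{KS})$} with the S-stationarity system from \cref{def:stationarities:MPOC}. Since $\varphi_\textup{KS}$ is continuously differentiable by \cite[Lemma~3.1]{KanzowSchwartz2013}, the surrogate is a genuine smooth \eqref{eq:NLP}, so its KKT conditions read
\[
	0=\nabla f(\bar x)
		+\sum_{i\in I^g(\bar x)}\lambda_i\nabla g_i(\bar x)
		+\sum_{j\in\mathcal P}\rho_j\nabla h_j(\bar x)
		+\sum_{l\in I^\varphi(\bar x)}\kappa_l\,\nabla\!\bigl[\varphi_\textup{KS}(G_l(\cdot),H_l(\cdot))\bigr](\bar x)
\]
with multipliers $\kappa_l\geq 0$, and we recall $I^\varphi(\bar x)=\mathcal I(\bar x)$.

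First I would compute $\nabla\varphi_\textup{KS}$ explicitly and evaluate it at the relevant sign patterns. For $l\in I^{0+}(\bar x)$ we have $G_l(\bar x)=0$, $H_l(\bar x)>0$, hence $a+b>0$ and the chain rule gives $\nabla[\varphi_\textup{KS}\circ(G_l,H_l)](\bar x)=H_l(\bar x)\nabla G_l(\bar x)$; symmetrically, for $l\in I^{+0}(\bar x)$ the gradient is $G_l(\bar x)\nabla H_l(\bar x)$. The crucial case is $l\in I^{00}(\bar x)$, where $G_l(\bar x)=H_l(\bar x)=0$ lies exactly on the boundary between the two branches; here both partial derivatives of $\varphi_\textup{KS}$ vanish (from the branch $ab$ one gets $(b,a)=(0,0)$, and from $-\tfrac12(a^2+b^2)$ one gets $(-a,-b)=(0,0)$), so that $\nabla[\varphi_\textup{KS}\circ(G_l,H_l)](\bar x)=0$. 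This vanishing is precisely what forces the degeneracy associated with S-stationarity.

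The matching then proceeds as follows. Setting $\mu_l:=\kappa_l H_l(\bar x)$ for $l\in I^{0+}(\bar x)$ and $\nu_l:=\kappa_l G_l(\bar x)$ for $l\in I^{+0}(\bar x)$, I obtain multipliers that are nonnegative (since $\kappa_l\geq 0$ and $H_l(\bar x),G_l(\bar x)>0$ on the respective sets), while the $I^{00}(\bar x)$ terms drop out entirely because their gradients are zero. This reproduces exactly the W-stationarity equation \eqref{eq:WSt} together with the extra S-stationarity requirement $\mu_l=\nu_l=0$ for $l\in I^{00}(\bar x)$, proving that a KKT point of the surrogate is S-stationary. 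For the converse I would run the same computation backwards: given an S-stationary point, the multipliers $\mu_l$ on $I^{0+}(\bar x)$ and $\nu_l$ on $I^{+0}(\bar x)$ are recovered as $\kappa_l:=\mu_l/H_l(\bar x)\geq 0$ and $\kappa_l:=\nu_l/G_l(\bar x)\geq 0$, which are well-defined since the denominators are strictly positive; on $I^{00}(\bar x)$ any $\kappa_l\geq 0$ works because the gradient contribution is null, and S-stationarity guarantees there is no leftover $G_l$- or $H_l$-gradient term to account for.

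I do not expect a serious obstacle here; the one point requiring care is the gradient computation on $I^{00}(\bar x)$, where one must verify genuine differentiability of $\varphi_\textup{KS}$ at the origin and that the gradient is the zero vector rather than merely a one-sided derivative. This is exactly the content of \cite[Lemma~3.1]{KanzowSchwartz2013}, which I would invoke rather than reprove. The sign bookkeeping (ensuring the transformed multipliers land in the correct nonnegative ranges) is the only other place to be attentive, but it is routine once the three cases are laid out as above.
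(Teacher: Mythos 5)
Your proposal is correct and follows essentially the same route as the paper's proof: both rely on the composite gradient $\nabla\bigl[\varphi_\textup{KS}(G_l(\cdot),H_l(\cdot))\bigr](\bar x)=H_l(\bar x)\nabla G_l(\bar x)+G_l(\bar x)\nabla H_l(\bar x)$ (which vanishes on $I^{00}(\bar x)$) and the same multiplier rescalings $\mu_l=\kappa_l H_l(\bar x)$, $\nu_l=\kappa_l G_l(\bar x)$ and their inverses $\kappa_l=\mu_l/H_l(\bar x)$, $\kappa_l=\nu_l/G_l(\bar x)$ on $I^{0+}(\bar x)$ and $I^{+0}(\bar x)$, respectively. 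Your explicit three-case gradient computation merely spells out what the paper invokes implicitly via the definition of $\varphi_\textup{KS}$, so the two arguments coincide in substance.
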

\begin{proof}
$[\Longrightarrow]$ Let $\bar x$ be an S-stationary point of \eqref{eq:MPOC}.
Then, we find multipliers $\lambda_i\geq 0$ ($i\in I^g(\bar x)$), $\rho_j$ ($j\in\mathcal P$),
$\mu_l\geq 0$ ($l\in I^{0+}(\bar x)$), and $\nu_l\geq 0$ ($l\in I^{+0}(\bar x)$) such that
\begin{align*}
	0
	&
		=\nabla f(\bar x)
		+\sum\limits_{i\in I^g(\bar x)}\lambda_i\nabla g_i(\bar x)
		+\sum\limits_{j\in\mathcal P}\rho_j\nabla h_j(\bar x)\\
	&\qquad
		+\sum\limits_{l\in I^{0+}(\bar x)}\mu_l\nabla G_l(\bar x)
		+\sum\limits_{l\in I^{+0}(\bar x)}\nu_l\nabla H_l(\bar x)
\end{align*}
holds. Next, we set
\[
	\forall l\in I^{\varphi_\textup{KS}}(\bar x)\colon\quad
		\xi_l:= \begin{cases}
					\mu_l/H_l(\bar x)	&\text{if }l\in I^{0+}(\bar x),\\
					\nu_l/G_l(\bar x)	&\text{if }l\in I^{+0}(\bar x),\\
					0					&\text{if }l\in I^{00}(\bar x),
				\end{cases}
\]
which allows us to rewrite the above equation as
\begin{align*}
	0&
		=\nabla f(\bar x)
		+\sum\limits_{i\in I^g(\bar x)}\lambda_i\nabla g_i(\bar x)
		+\sum\limits_{j\in\mathcal P}\rho_j\nabla h_j(\bar x)\\
	 &\qquad 
	 	+\sum\limits_{l\in I^{\varphi_\textup{KS}}(\bar x)}
			\xi_l\bigl(H_l(\bar x)\nabla G_l(\bar x)+G_l(\bar x)\nabla H_l(\bar x)\bigr).
\end{align*}
By definition of $\varphi_\textup{KS}$ and $\xi_l\geq 0$ for all $l\in I^{\varphi_\textup{KS}}(\bar x)$,
$\bar x$ is a KKT point of \hyperref[eq:MPOC_NCP]{\textup{MPOC}$(\varphi_\textup{KS})$}.\\
$[\Longleftarrow]$ If $\bar x$ is a KKT point of \hyperref[eq:MPOC_NCP]{\textup{MPOC}$(\varphi_\textup{KS})$},
	we find multipliers $\bar\lambda_i\geq 0$ ($i\in I^g(\bar x)$), $\bar\rho_j$ ($j\in\mathcal P$), and
	$\bar\xi_l\geq 0$ ($l\in I^{\varphi_\textup{KS}}(\bar x)$) such that
	\begin{align*}
	0&
		=\nabla f(\bar x)
		+\sum\limits_{i\in I^g(\bar x)}\bar \lambda_i\nabla g_i(\bar x)
		+\sum\limits_{j\in\mathcal P}\bar \rho_j\nabla h_j(\bar x)\\
	 &\qquad 
	 	+\sum\limits_{l\in I^{\varphi_\textup{KS}}(\bar x)}
			\bar \xi_l\bigl(H_l(\bar x)\nabla G_l(\bar x)+G_l(\bar x)\nabla H_l(\bar x)\bigr)
\end{align*}
	is valid. Now, we define $\bar\mu_l:=\bar\xi_l H_l(\bar x)$ ($l\in I^{0+}(\bar x)\cup I^{00}(\bar x)$)
	as well as $\bar\nu_l:=\bar\xi_l G_l(\bar x)$ ($l\in I^{+0}(\bar x)\cup I^{00}(\bar x)$) in order to see
	that $\bar x$ is S-stationary for \eqref{eq:MPOC}.
\end{proof}

The above result justifies to solve the smooth standard nonlinear problem 
\hyperref[eq:MPOC_NCP]{\textup{MPOC}$(\varphi_\textup{KS})$} instead of
the disjunctive program \eqref{eq:MPOC} in order to find S-stationary points of the latter.
However, it needs to be noted that \hyperref[eq:MPOC_NCP]{\textup{MPOC}$(\varphi_\textup{KS})$}
is still a challenging problem due to the combinatorial structure of its feasible set.
Additionally, if $I^{00}(\bar x)\neq\varnothing$ holds true for some feasible point
$\bar x\in\R^n$ of \eqref{eq:MPOC}, then for each $l\in I^{00}(\bar x)$, the gradient of
the map $x\mapsto\varphi_\textup{KS}(G_l(x),H_l(x))$ vanishes at $\bar x$. This particularly means
that popular constraint qualifications like MFCQ or LICQ do not hold at $\bar x$
for \hyperref[eq:MPOC_NCP]{\textup{MPOC}$(\varphi_\textup{KS})$}.
However, it is possible to obtain the following result.
\begin{lemma}\label{lem:MPOC_LICQ_yields_GCQ_KS}
	Let $\bar x\in\R^n$ be a feasible point of \eqref{eq:MPOC} where MPOC-LICQ is valid.
	Then, GCQ holds for \hyperref[eq:MPOC_NCP]{\textup{MPOC}$(\varphi_\textup{KS})$}
	at $\bar x$.
\end{lemma}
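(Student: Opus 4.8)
The plan is to argue entirely with polar cones. Since $\varphi_\textup{KS}$ is or-compatible, \hyperref[eq:MPOC_NCP]{\textup{MPOC}$(\varphi_\textup{KS})$} shares the feasible set $X$ of \eqref{eq:MPOC}, so GCQ at $\bar x$ is exactly the identity $\mathcal T_X(\bar x)^\circ=\mathcal L(\bar x)^\circ$, where $\mathcal L(\bar x)$ is the linearization cone of the surrogate. First I would read off the gradients of the maps $x\mapsto\varphi_\textup{KS}(G_l(x),H_l(x))$ at $\bar x$: as computed in the proof of \cref{prop:MPOC_KS}, each equals $H_l(\bar x)\nabla G_l(\bar x)+G_l(\bar x)\nabla H_l(\bar x)$, and because $I^{\varphi_\textup{KS}}(\bar x)=\mathcal I(\bar x)$ this is the positive multiple $H_l(\bar x)\nabla G_l(\bar x)$ for $l\in I^{0+}(\bar x)$, $G_l(\bar x)\nabla H_l(\bar x)$ for $l\in I^{+0}(\bar x)$, and the zero vector for $l\in I^{00}(\bar x)$. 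Hence $\mathcal L(\bar x)$ is the polyhedral cone cut out by $\nabla g_i(\bar x)\cdot d\le0$ ($i\in I^g(\bar x)$), $\nabla h_j(\bar x)\cdot d=0$ ($j\in\mathcal P$), $\nabla G_l(\bar x)\cdot d\le0$ ($l\in I^{0+}(\bar x)$), and $\nabla H_l(\bar x)\cdot d\le0$ ($l\in I^{+0}(\bar x)$), the indices in $I^{00}(\bar x)$ contributing nothing; its polar $\mathcal L(\bar x)^\circ$ is the finitely generated cone obtained by taking nonnegative combinations of the inequality gradients and arbitrary combinations of the $\nabla h_j(\bar x)$.

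Because $\mathcal T_X(\bar x)\subset\mathcal L(\bar x)$ always holds, polarization already gives $\mathcal L(\bar x)^\circ\subset\mathcal T_X(\bar x)^\circ$, so the whole content is the reverse inclusion. To get it I would introduce the genuinely disjunctive linearization cone $\mathcal L^\textup{MPOC}(\bar x)$, which additionally imposes for each $l\in I^{00}(\bar x)$ the disjunction $\nabla G_l(\bar x)\cdot d\le0\lor\nabla H_l(\bar x)\cdot d\le0$. Letting $\Sigma$ be the finite set of selections $\sigma\colon I^{00}(\bar x)\to\{G,H\}$, this cone decomposes as $\mathcal L^\textup{MPOC}(\bar x)=\bigcup_{\sigma\in\Sigma}P_\sigma$, where $P_\sigma$ enforces in addition $\nabla G_l(\bar x)\cdot d\le0$ when $\sigma(l)=G$ and $\nabla H_l(\bar x)\cdot d\le0$ when $\sigma(l)=H$.

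The key geometric step is $\mathcal L^\textup{MPOC}(\bar x)\subset\mathcal T_X(\bar x)$. For this I would note that each $P_\sigma$ is the linearization cone at $\bar x$ of the standard nonlinear program obtained from \eqref{eq:MPOC} by replacing, near $\bar x$, each locally active or-constraint by a single one-sided inequality ($G_l\le0$ on $I^{0+}(\bar x)$, $H_l\le0$ on $I^{+0}(\bar x)$, and the $\sigma$-selected inequality on $I^{00}(\bar x)$). The feasible set $X_\sigma$ of this branch program satisfies $X_\sigma\cap U\subset X$ on a neighborhood $U$ of $\bar x$, since satisfying one disjunct forces $(G_l(x),H_l(x))\in O$ while the strictly feasible indices place no local restriction. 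The active gradients of the branch program form a subset of the MPOC-LICQ family, hence are linearly independent, so LICQ holds for it at $\bar x$ and in particular $\mathcal T_{X_\sigma}(\bar x)=\mathcal L_{X_\sigma}(\bar x)=P_\sigma$. Thus $P_\sigma=\mathcal T_{X_\sigma}(\bar x)\subset\mathcal T_X(\bar x)$ for each $\sigma$, and taking the union yields $\mathcal L^\textup{MPOC}(\bar x)\subset\mathcal T_X(\bar x)$, whence $\mathcal T_X(\bar x)^\circ\subset\mathcal L^\textup{MPOC}(\bar x)^\circ$.

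It remains to identify $\mathcal L^\textup{MPOC}(\bar x)^\circ$ with $\mathcal L(\bar x)^\circ$, and this is where I expect the main (if routine) work to lie. Applying the polarization rule to the finite union gives $\mathcal L^\textup{MPOC}(\bar x)^\circ=\bigcap_{\sigma\in\Sigma}P_\sigma^\circ$, and each $P_\sigma^\circ$ equals $\mathcal L(\bar x)^\circ$ enlarged by $\cone\{\nabla G_l(\bar x)\mid\sigma(l)=G\}+\cone\{\nabla H_l(\bar x)\mid\sigma(l)=H\}$. The inclusion $\mathcal L(\bar x)^\circ\subset\bigcap_\sigma P_\sigma^\circ$ is immediate, and for the converse I would invoke MPOC-LICQ: every vector in the span of the MPOC-LICQ gradients has a unique representation in them, so if $y\in P_{\sigma_1}^\circ\cap P_{\sigma_2}^\circ$ for two selections differing only at one index $l_0\in I^{00}(\bar x)$, uniqueness forces the coefficients of both $\nabla G_{l_0}(\bar x)$ and $\nabla H_{l_0}(\bar x)$ to vanish. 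Ranging over $l_0$ shows any $y\in\bigcap_\sigma P_\sigma^\circ$ lies in $\mathcal L(\bar x)^\circ$. Chaining $\mathcal T_X(\bar x)^\circ\subset\mathcal L^\textup{MPOC}(\bar x)^\circ=\mathcal L(\bar x)^\circ\subset\mathcal T_X(\bar x)^\circ$ then closes the argument and establishes GCQ.
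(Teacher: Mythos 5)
Your proposal is correct and takes essentially the same route as the paper's own proof: both arguments decompose $X$ near $\bar x$ into branch nonlinear programs indexed by a choice of one-sided inequality on $I^{00}(\bar x)$ (your $X_\sigma$ coincide with the paper's sets $X(\bar x,I)$ under $I=\sigma^{-1}(G)$), use MPOC-LICQ to obtain LICQ for each branch so that tangent and linearization cones coincide, polarize, and then exploit the linear independence inherent in MPOC-LICQ to force the multipliers attached to $\nabla G_l(\bar x)$, $\nabla H_l(\bar x)$ for $l\in I^{00}(\bar x)$ to vanish. The only cosmetic differences are that the paper extracts this last conclusion by subtracting the two representations coming from the extreme branches $I=\varnothing$ and $I=I^{00}(\bar x)$, whereas you compare pairs of selections differing at a single index, and that the paper cites the tangent-cone decomposition as an equality while you verify only the inclusion $\bigcup_\sigma P_\sigma\subset\mathcal T_X(\bar x)$, which is all that is needed.
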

\begin{proof}
	Let $\mathcal L_X^\textup{KS}(\bar x)$ be the linearization cone associated with
	program \hyperref[eq:MPOC_NCP]{\textup{MPOC}$(\varphi_\textup{KS})$} at $\bar x$. 
	By standard arguments, $\mathcal T_X(\bar x)\subset\mathcal L_X^\textup{KS}(\bar x)$
	holds true, and this yields the inclusion 
	$\mathcal L_X^\textup{KS}(\bar x)^\circ\subset\mathcal T_X(\bar x)^\circ$.
	In order to verify that GCQ holds for \hyperref[eq:MPOC_NCP]{\textup{MPOC}$(\varphi_\textup{KS})$} 
	at $\bar x$, we only need to show the opposite inclusion.
	
	Let $I\subset I^{00}(\bar x)$ be arbitrarily chosen.
	We consider the program
	\begin{equation}\label{eq:MPOC_I}\tag{MPOC$(\bar x,I)$}
		\begin{aligned}
			f(x)&\,\to\,\min&&&\\
			g_i(x)&\,\leq\,0&\qquad&i\in\mathcal M\\
			h_j(x)&\,=\,0&\qquad&j\in\mathcal P\\
			G_l(x)&\,\leq\,0&\qquad&l\in I^{0+}(\bar x)\cup I\\
			H_l(x)&\,\leq\,0&\qquad&l\in I^{+0}(\bar x)\cup(I^{00}(\bar x)\setminus I)
		\end{aligned}
	\end{equation}
	whose feasible set will be denoted by $X(\bar x,I)$ in the subsequent considerations. 
	Locally around $\bar x$,
	the family $\{X(\bar x,I)\}_{I\subset I^{00}(\bar x)}$ provides a decomposition of $X$
	which is why the relation
	\[
		\mathcal T_X(\bar x)=\bigcup\limits_{I\subset I^{00}(\bar x)}\mathcal T_{X(\bar x,I)}(\bar x)
	\]
	holds, cf.\ \cite[Lemma~3.1]{FlegelKanzow2005} or 
	\cite[Lemma~5.1]{Mehlitz2019} for related results associated with MPCCs or MPSCs, respectively.
	Noting that the validity of MPOC-LICQ implies that LICQ holds for \eqref{eq:MPOC_I}
	at $\bar x$ for each $I\subset I^{00}(\bar x)$, we can infer
	\[
		\mathcal T_X(\bar x)=\bigcup\limits_{I\subset I^{00}(\bar x)}\mathcal L_{X(\bar x,I)}(\bar x),
	\]
	and polarizing this formula shows
	\[
		\mathcal T_X(\bar x)^\circ=\bigcap\limits_{I\subset I^{00}(\bar x)}\mathcal L_{X(\bar x,I)}(\bar x)^\circ
	\]
	where $\mathcal L_{X(\bar x,I)}(\bar x)$ denotes the linearization cone of \eqref{eq:MPOC_I} at $\bar x$.
	
	Pick $\eta\in\mathcal T_X(\bar x)^\circ$ arbitrarily.
	The above considerations show 
	\[
		\eta\in\mathcal L_{X(\bar x,\varnothing)}(\bar x)^\circ\cap\mathcal L_{X(\bar x,I^{00}(\bar x))}(\bar x)^\circ.
	\]
	Hence, we find multipliers $\lambda^s_i\geq 0$ ($s=1,2$ and $i\in I^g(\bar x)$),
	$\rho^s_j$ ($s=1,2$ and $j\in\mathcal P$), $\mu^s_l\geq 0$ ($s=1,2$ and $l\in I^{0+}(\bar x)$), $\nu^s_l\geq 0$ 
	($s=1,2$ and $l\in I^{+0}(\bar x)$), $\mu^2_l\geq 0$ ($l\in I^{00}(\bar x)$), and $\nu^1_l\geq 0$ ($l\in I^{00}(\bar x)$)
	such that $\eta$ possesses the following representations
	\begin{align*}
		\eta
		&=
		\sum\limits_{i\in I^g(\bar x)}\lambda^1_i\nabla g_i(\bar x)
		+\sum\limits_{j\in\mathcal P}\rho^1_j\nabla h_j(\bar x)\\
		&\qquad
		+\sum\limits_{l\in I^{0+}(\bar x)}\mu^1_l\nabla G_l(\bar x)
		+\sum\limits_{l\in I^{+0}(\bar x)\cup I^{00}(\bar x)}\nu^1_l\nabla H_l(\bar x)\\
		&=
		\sum\limits_{i\in I^g(\bar x)}\lambda^2_i\nabla g_i(\bar x)
		+\sum\limits_{j\in\mathcal P}\rho^2_j\nabla h_j(\bar x)\\
		&\qquad
		+\sum\limits_{l\in I^{0+}(\bar x)\cup I^{00}(\bar x)}\mu^2_l\nabla G_l(\bar x)
		+\sum\limits_{l\in I^{+0}(\bar x)}\nu^2_l\nabla H_l(\bar x).
	\end{align*}
	This particularly shows
	\begin{align*}
		0
		&=
		\sum\limits_{i\in I^g(\bar x)}(\lambda^1_i-\lambda^2_i)\nabla g_i(\bar x)
		+\sum\limits_{j\in\mathcal P}(\rho^1_j-\rho^2_j)\nabla h_j(\bar x)\\
		&\qquad
		+\sum\limits_{l\in I^{0+}(\bar x)}(\mu^1_l-\mu^2_l)\nabla G_l(\bar x)
		+\sum\limits_{l\in I^{+0}(\bar x)}(\nu^1_l-\nu^2_l)\nabla H_l(\bar x)\\
		&\qquad
		-\sum\limits_{l\in I^{00}(\bar x)}\mu^2_l\nabla G_l(\bar x)
		+\sum\limits_{l\in I^{00}(\bar x)}\nu^1_l\nabla H_l(\bar x).
	\end{align*}
	Exploiting the validity of MPOC-LICQ, this leads to $\mu^2_l=\nu^1_l=0$ for all
	$l\in I^{00}(\bar x)$, i.e.\
	\begin{align*}
		\eta
		&=
		\sum\limits_{i\in I^g(\bar x)}\lambda^1_i\nabla g_i(\bar x)
		+\sum\limits_{j\in\mathcal P}\rho^1_j\nabla h_j(\bar x)\\
		&\qquad
		+\sum\limits_{l\in I^{0+}(\bar x)}\mu^1_l\nabla G_l(\bar x)
		+\sum\limits_{l\in I^{+0}(\bar x)}\nu^1_l\nabla H_l(\bar x)
	\end{align*}
	holds true. A simple calculation shows that this means 
	$\eta\in \mathcal L^\textup{KS}_X(\bar x)^\circ$.
\end{proof}

In contrast to \hyperref[eq:MPOC_NCP]{\textup{MPOC}$(\varphi_\textup{KS})$}, the programs
\hyperref[eq:MPOC_NCP]{\textup{MPOC}$(\varphi_\textup{min})$} and 
\hyperref[eq:MPOC_NCP]{\textup{MPOC}$(\varphi_\textup{FB})$} are nonsmooth. 
However, using suitable subdifferential constructions, it is possible to state
KKT-type systems associated with these optimization problems as well.
Noting that the active set $I^\varphi(\bar x)$ is directly related to the complementarity
set $C$ defined in \eqref{eq:def_compl_set}, only subdifferential information of 
$\varphi$ on $C$ is relevant for the 
characterization of the associated KKT systems.
Using Clarke's constructions, see \cite{Clarke1983}, we obtain
\begin{align*}
		\partial^\textup{C}\varphi_\textup{min}(a,b)
		&=
			\begin{cases}
				\{\mathtt e^2_1\}
					&\text{if } a=0\,\land\, b>0,\\
				\{\mathtt e^2_2\}
					&\text{if }a>0\,\land\,b=0,\\
				\conv\left\{\mathtt e^2_1,\mathtt e^2_2\right\}
					&\text{if }a=b=0,
			\end{cases}\\
		\partial^\textup{C}\varphi_\textup{FB}(a,b)
		&=
			\begin{cases}
				\{\mathtt e^2_1\}
					&\text{if } a=0\,\land\, b>0,\\
				\{\mathtt e^2_2\}
					&\text{if }a>0\,\land\,b=0,\\
				\mathbb B^1(\mathtt e^2)
					&\text{if }a=b=0
			\end{cases}
\end{align*}
for all $(a,b)\in C$.  Sharper results can be obtained with Mordukhovich's subdifferential,
see \cite{Mordukhovich2006}, which computes as
\begin{align*}
		\partial^\textup{M}\varphi_\textup{min}(a,b)
		&=
			\begin{cases}
				\{\mathtt e^2_1\}
					&\text{if } a=0\,\land\, b>0,\\
				\{\mathtt e^2_2\}
					&\text{if }a>0\,\land\,b=0,\\
				\left\{\mathtt e^2_1,\mathtt e^2_2\right\}
					&\text{if }a=b=0,
			\end{cases}\\
		\partial^\textup{M}\varphi_\textup{FB}(a,b)
		&=
			\begin{cases}
				\{\mathtt e^2_1\}
					&\text{if } a=0\,\land\, b>0,\\
				\{\mathtt e^2_2\}
					&\text{if }a>0\,\land\,b=0,\\
				\mathbb S^1(\mathtt e^2)
					&\text{if }a=b=0.
			\end{cases}
\end{align*}
Using these formulas and suitable chain rules for the underlying subdifferentials, respective
KKT-type systems associated with \hyperref[eq:MPOC_NCP]{\textup{MPOC}$(\varphi_\textup{min})$} and 
\hyperref[eq:MPOC_NCP]{\textup{MPOC}$(\varphi_\textup{FB})$} can be derived, 
see \cite[Theorem~5.6.2]{Vinter2000} and \cite[Theorem~5.21]{Mordukhovich2006}, respectively.
For simplicity, we refer to these first-order systems as KKT systems again and specify
the underlying subdifferential construction.

The upcoming result which addresses \hyperref[eq:MPOC_NCP]{\textup{MPOC}$(\varphi_\textup{min})$}
can be validated exploiting a similar strategy as used for
the derivation of \cref{prop:MPOC_KS} doing some nearby changes.
That is why its proof is omitted here.
\begin{proposition}\label{prop:MPOC_min}
\begin{enumerate}
	\item A feasible point $\bar x\in\R^n$ of \eqref{eq:MPOC} is W-stationary if and only
	if it is a KKT point of \hyperref[eq:MPOC_NCP]{\textup{MPOC}$(\varphi_\textup{min})$}
	w.r.t.\ Clarke's subdifferential.
	\item A feasible point $\bar x\in\R^n$ of \eqref{eq:MPOC} is M-stationary if and only
	if it is a KKT point of \hyperref[eq:MPOC_NCP]{\textup{MPOC}$(\varphi_\textup{min})$}
	w.r.t.\ Mordukhovich's subdifferential.
\end{enumerate}
\end{proposition}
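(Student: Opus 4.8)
The plan is to imitate the proof of \cref{prop:MPOC_KS}, replacing the smooth chain rule used there by the nonsmooth chain rules underlying the KKT systems recorded above (\cite[Theorem~5.6.2]{Vinter2000} and \cite[Theorem~5.21]{Mordukhovich2006}) and then plugging in the explicit formulas for $\partial^\textup{C}\varphi_\textup{min}$ and $\partial^\textup{M}\varphi_\textup{min}$ on the complementarity set $C$. Writing $\Phi_l(x):=\varphi_\textup{min}(G_l(x),H_l(x))$, I would first record that, since the inner map $x\mapsto(G_l(x),H_l(x))$ is continuously differentiable, these chain rules hold with equality, so that
\[
	\partial^\bullet\Phi_l(\bar x)=\bigl\{s_1\nabla G_l(\bar x)+s_2\nabla H_l(\bar x)\,\big|\,(s_1,s_2)\in\partial^\bullet\varphi_\textup{min}(G_l(\bar x),H_l(\bar x))\bigr\}
\]
for both the Clarke ($\bullet=\textup C$) and the Mordukhovich ($\bullet=\textup M$) subdifferential. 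Because $I^\varphi(\bar x)=\mathcal I(\bar x)=I^{0+}(\bar x)\cup I^{+0}(\bar x)\cup I^{00}(\bar x)$, the associated KKT system of \hyperref[eq:MPOC_NCP]{\textup{MPOC}$(\varphi_\textup{min})$} takes the form $0=\nabla f(\bar x)+\sum_{i\in I^g(\bar x)}\lambda_i\nabla g_i(\bar x)+\sum_{j\in\mathcal P}\rho_j\nabla h_j(\bar x)+\sum_{l\in\mathcal I(\bar x)}\xi_l\zeta_l$ with $\lambda_i\geq 0$, $\xi_l\geq 0$, and $\zeta_l\in\partial^\bullet\Phi_l(\bar x)$, and the whole argument reduces to matching these $\xi_l\zeta_l$ contributions against the $\mu_l$- and $\nu_l$-terms of \eqref{eq:WSt}, one index set at a time.

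For item (i) I would treat the three index sets separately. On $I^{0+}(\bar x)$ one has $\partial^\textup{C}\varphi_\textup{min}=\{\mathtt e^2_1\}$, so $\zeta_l=\nabla G_l(\bar x)$ and the contribution is $\xi_l\nabla G_l(\bar x)$; on $I^{+0}(\bar x)$ one has $\partial^\textup{C}\varphi_\textup{min}=\{\mathtt e^2_2\}$, giving $\xi_l\nabla H_l(\bar x)$. On $I^{00}(\bar x)$, where $\partial^\textup{C}\varphi_\textup{min}=\conv\{\mathtt e^2_1,\mathtt e^2_2\}$, every $\xi_l\zeta_l$ equals $\mu_l\nabla G_l(\bar x)+\nu_l\nabla H_l(\bar x)$ with $\mu_l,\nu_l\geq 0$; conversely any such pair arises from $\xi_l:=\mu_l+\nu_l$ together with the convex combination $\tfrac{\mu_l}{\xi_l}\mathtt e^2_1+\tfrac{\nu_l}{\xi_l}\mathtt e^2_2$ (the degenerate case $\mu_l=\nu_l=0$ being covered by $\xi_l=0$). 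Reading these identifications in both directions turns the Clarke KKT system into the W-stationarity condition \eqref{eq:WSt} and back, which proves the equivalence.

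Item (ii) is obtained by the identical bookkeeping, the only change occurring on $I^{00}(\bar x)$, where now $\partial^\textup{M}\varphi_\textup{min}=\{\mathtt e^2_1,\mathtt e^2_2\}$ rather than its convex hull. Hence for each $l\in I^{00}(\bar x)$ a Mordukhovich KKT point forces $\zeta_l\in\{\nabla G_l(\bar x),\nabla H_l(\bar x)\}$, i.e.\ either $(\mu_l,\nu_l)=(\xi_l,0)$ or $(\mu_l,\nu_l)=(0,\xi_l)$, which is exactly the extra requirement $\mu_l\nu_l=0$ defining M-stationarity; conversely, $\mu_l\nu_l=0$ allows one to select the active extreme point of $\partial^\textup{M}\varphi_\textup{min}$ and to set $\xi_l:=\mu_l+\nu_l$. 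This establishes the second equivalence.

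The one point demanding care is the \emph{equality} in the chain rule: for the Clarke subdifferential equality is immediate from continuous differentiability of the inner map, and for the Mordukhovich subdifferential it again follows from the smoothness of the inner map. Only equality (rather than a mere upper inclusion) lets both implications in each item go through, since an inclusion would translate only one direction of the correspondence; granting this, everything else is the routine multiplier matching that parallels \cref{prop:MPOC_KS}.
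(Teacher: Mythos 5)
Your index-by-index multiplier matching is correct, and it is exactly the ``similar strategy \ldots doing some nearby changes'' that the paper invokes when it omits this proof: on $I^{0+}(\bar x)$ and $I^{+0}(\bar x)$ the subdifferential of $\varphi_\textup{min}$ is a singleton and forces $\mu_l=\xi_l$ resp.\ $\nu_l=\xi_l$, while on $I^{00}(\bar x)$ the passage between $\conv\{\mathtt e^2_1,\mathtt e^2_2\}$ and arbitrary pairs $\mu_l,\nu_l\geq 0$ (Clarke/W-stationarity), resp.\ between $\{\mathtt e^2_1,\mathtt e^2_2\}$ and pairs with $\mu_l\nu_l=0$ (Mordukhovich/M-stationarity), carries both equivalences. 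This mirrors the paper's explicit nonsmooth template, the proof of \cref{prop:MPOC_FB}.

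The one flawed point is your justification of the chain rule \emph{equality}, which you yourself single out as load-bearing. Continuous differentiability of the inner map $x\mapsto(G_l(x),H_l(x))$ does \emph{not} by itself yield equality for either subdifferential; it yields only the inclusion $\partial^\bullet\Phi_l(\bar x)\subset\bigl\{s_1\nabla G_l(\bar x)+s_2\nabla H_l(\bar x)\,\big|\,(s_1,s_2)\in\partial^\bullet\varphi_\textup{min}(G_l(\bar x),H_l(\bar x))\bigr\}$, which suffices for the direction ``KKT point $\Longrightarrow$ stationary'' but not for the converse. Equality in such chain rules needs extra hypotheses, e.g.\ full rank of the Jacobian of $(G_l,H_l)$ at $\bar x$ (not assumed here) or regularity of the outer function, which fails since $\varphi_\textup{min}$ is concave and hence not Clarke-regular at the kink. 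Fortunately this does not sink the proof, for two independent reasons. First, the ``KKT systems w.r.t.\ Clarke's/Mordukhovich's subdifferential'' are, by the convention the paper fixes just before the proposition and uses in the proof of \cref{prop:MPOC_FB}, already stated in the composed form with pairs $(\alpha_l,\beta_l)\in\partial^\bullet\varphi_\textup{min}(G_l(\bar x),H_l(\bar x))$ attached to $\nabla G_l(\bar x)$ and $\nabla H_l(\bar x)$; with that definition your matching is the entire proof and no chain-rule equality is needed. Second, if you insist on the composite formulation, the missing reverse inclusion can be proved directly for $\varphi_\textup{min}$: if $\nabla G_l(\bar x)\neq\nabla H_l(\bar x)$, then each of the sets $\{x\,|\,G_l(x)<H_l(x)\}$ and $\{x\,|\,H_l(x)<G_l(x)\}$ contains points arbitrarily close to $\bar x$ (linearize the difference $G_l-H_l$), on which $\Phi_l$ coincides with the respective smooth branch; hence both $\nabla G_l(\bar x)$ and $\nabla H_l(\bar x)$ arise as limits of nearby Fr\'{e}chet subgradients and lie in $\partial^\textup{M}\Phi_l(\bar x)$, and the Clarke case follows by taking convex hulls. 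If instead $\nabla G_l(\bar x)=\nabla H_l(\bar x)$, the right-hand side is a singleton containing the nonempty set $\partial^\textup{M}\Phi_l(\bar x)$. With this repair, or simply by adopting the paper's definition of the KKT systems, your argument is complete.
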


Finally, we consider the KKT system of \hyperref[eq:MPOC_NCP]{\textup{MPOC}$(\varphi_\textup{FB})$}.
\begin{proposition}\label{prop:MPOC_FB}
	For a feasible point $\bar x\in\R^n$ of \eqref{eq:MPOC}, the following statements are equivalent:
	\begin{enumerate}
		\item[(a)] $\bar x$ is W-stationary,
		\item[(b)] $\bar x$ is a KKT point of \hyperref[eq:MPOC_NCP]{\textup{MPOC}$(\varphi_\textup{FB})$}
			w.r.t.\ Clarke's subdifferential, and
		\item[(c)] $\bar x$ is a KKT point of \hyperref[eq:MPOC_NCP]{\textup{MPOC}$(\varphi_\textup{FB})$}
			w.r.t.\ Mordukhovich's subdifferential.
	\end{enumerate}
\end{proposition}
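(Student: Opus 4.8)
The plan is to show $(a)\Longleftrightarrow(b)$ and $(a)\Longleftrightarrow(c)$; since by construction $\partial^\textup{M}\varphi_\textup{FB}\subset\partial^\textup{C}\varphi_\textup{FB}$ holds, any $(c)\Rightarrow(b)$ direction would be automatic, so the real content lies in verifying that the apparently coarser Clarke-based KKT system already forces W-stationarity. The governing observation is that the active set $I^{\varphi_\textup{FB}}(\bar x)$ equals $\mathcal I(\bar x)=I^{0+}(\bar x)\cup I^{+0}(\bar x)\cup I^{00}(\bar x)$, so the KKT condition reads
\[
	0=\nabla f(\bar x)
		+\sum\limits_{i\in I^g(\bar x)}\lambda_i\nabla g_i(\bar x)
		+\sum\limits_{j\in\mathcal P}\rho_j\nabla h_j(\bar x)
		+\sum\limits_{l\in\mathcal I(\bar x)}\theta_l\cdot\nabla\bigl(\varphi_\textup{FB}(G_l,H_l)\bigr)(\bar x),
\]
with multipliers $\theta_l\geq 0$ and each subgradient term expanded via the chain rule as $\theta_l\bigl(\alpha_l\nabla G_l(\bar x)+\beta_l\nabla H_l(\bar x)\bigr)$, where $(\alpha_l,\beta_l)$ ranges over the relevant subdifferential of $\varphi_\textup{FB}$ at $(G_l(\bar x),H_l(\bar x))$.

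First I would read off, from the subdifferential tables just above the statement, what $(\alpha_l,\beta_l)$ can be on the three active index types. For $l\in I^{0+}(\bar x)$ one has $(\alpha_l,\beta_l)=\mathtt e^2_1=(1,0)$ and for $l\in I^{+0}(\bar x)$ one has $(\alpha_l,\beta_l)=\mathtt e^2_2=(0,1)$; these are singletons, so the Clarke and Mordukhovich constructions agree there and contribute exactly $\theta_l\nabla G_l(\bar x)$ and $\theta_l\nabla H_l(\bar x)$, respectively. The only index type where the two subdifferentials differ is $l\in I^{00}(\bar x)$, where $\partial^\textup{C}\varphi_\textup{FB}(0,0)=\mathbb B^1(\mathtt e^2)$ (the closed unit disk centered at $(1,1)$) while $\partial^\textup{M}\varphi_\textup{FB}(0,0)=\mathbb S^1(\mathtt e^2)$ (its bounding circle). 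The crux is therefore what happens at the $I^{00}$ indices.

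Here is the key step, and it is where I expect the only real subtlety. For $(c)\Rightarrow(a)$ and $(b)\Rightarrow(a)$, each $I^{00}$ term produces $\theta_l(\alpha_l,\beta_l)$ with $\theta_l\geq0$ and $(\alpha_l,\beta_l)$ in the disk (Clarke) or circle (Mordukhovich); in both cases the disk and circle lie in the closed first quadrant shifted to be tangent to the axes, so $\alpha_l\geq0$ and $\beta_l\geq0$. Setting $\mu_l:=\theta_l\alpha_l\geq0$ and $\nu_l:=\theta_l\beta_l\geq0$ recovers precisely the nonnegative multipliers required in the W-stationarity condition \eqref{eq:WSt}, establishing both implications to $(a)$ at once. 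For the converse $(a)\Rightarrow(c)$ (which also yields $(a)\Rightarrow(b)$ since the Mordukhovich system is a restriction of the Clarke one), I start from W-stationary multipliers $\mu_l,\nu_l\geq0$ on the $I^{00}$ indices and must realize each pair as $\theta_l(\alpha_l,\beta_l)$ with $(\alpha_l,\beta_l)\in\mathbb S^1(\mathtt e^2)$ and $\theta_l\geq0$. The obstacle is that $\mathbb S^1(\mathtt e^2)$ is only the circle, not the full quadrant, so not every $(\mu_l,\nu_l)$ in the quadrant is a nonnegative multiple of a point on that circle. I would resolve this by noting that $\mathbb S^1(\mathtt e^2)$ passes through the origin $(0,0)$, through $(2,0)$, $(0,2)$, and $(1{+}\tfrac{1}{\sqrt2},1{+}\tfrac1{\sqrt2})$, and that the cone generated by the circle $\mathbb S^1(\mathtt e^2)$ is exactly the closed first quadrant: indeed $\cone(\mathbb S^1(\mathtt e^2))=\cone(\mathbb B^1(\mathtt e^2))$ because the disk and its boundary circle generate the same cone, and that cone is $\R^2_+$ since the disk is tangent to both axes at the origin while meeting the open first quadrant. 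Hence every $(\mu_l,\nu_l)\in\R^2_+$ can be written as $\theta_l(\alpha_l,\beta_l)$ with $(\alpha_l,\beta_l)\in\mathbb S^1(\mathtt e^2)$ and $\theta_l\geq0$, which is exactly the Mordukhovich KKT form. Assembling the three index types then gives the full equivalence, and I would close by remarking that this circle-generates-the-quadrant phenomenon is precisely what collapses the Clarke, Mordukhovich, and weak stationarity notions for $\varphi_\textup{FB}$, in contrast to $\varphi_\textup{min}$ where the Mordukhovich subdifferential $\{\mathtt e^2_1,\mathtt e^2_2\}$ at the origin is a two-point set whose nonnegative combinations land on the axes only after convexification, separating the W- and M-levels.
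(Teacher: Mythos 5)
Your proposal follows the same architecture as the paper's proof: (c)$\Rightarrow$(b) from the inclusion $\partial^\textup{M}\varphi_\textup{FB}\subset\partial^\textup{C}\varphi_\textup{FB}$, (b)$\Rightarrow$(a) from $\mathbb B^1(\mathtt e^2)\subset\R^2_+$, and (a)$\Rightarrow$(c) by realizing each W-stationary pair $(\mu_l,\nu_l)\in\R^2_+$ at a biactive index as $\theta_l(\alpha_l,\beta_l)$ with $\theta_l\geq 0$ and $(\alpha_l,\beta_l)\in\mathbb S^1(\mathtt e^2)$. The paper verifies this last step by an explicit construction, setting $\xi_l:=\mu_l+\nu_l+\sqrt{2\mu_l\nu_l}$ and $(\alpha_l,\beta_l):=(\mu_l/\xi_l,\nu_l/\xi_l)$ (with $\alpha_l=\beta_l:=1-\tfrac{\sqrt2}{2}$ when $\xi_l=0$), which one checks directly lies on the circle; you instead invoke the cone identity $\cone\bigl(\mathbb S^1(\mathtt e^2)\bigr)=\R^2_+$. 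That identity is true, so your proof is correct in substance, and the two routes are essentially the same argument dressed differently.

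However, the geometric facts you offer in support of the cone identity are wrong and need repair. The circle $\mathbb S^1(\mathtt e^2)$ does \emph{not} pass through $(0,0)$, $(2,0)$, or $(0,2)$ --- all three lie at distance $\sqrt2$ from the center $(1,1)$; it passes through $(1,0)$, $(0,1)$, $(2,1)$, $(1,2)$, and $(1\pm\tfrac{1}{\sqrt2})(1,1)$. Likewise the disk $\mathbb B^1(\mathtt e^2)$ is not ``tangent to both axes at the origin''; it is tangent to the $\mu_l$-axis at $(1,0)$ and to the $\nu_l$-axis at $(0,1)$, and the origin lies strictly \emph{outside} it. Finally, your clause ``because the disk and its boundary circle generate the same cone'' merely restates what is to be shown; the actual reason is precisely that the origin lies outside the disk, so every ray emanating from the origin that meets the disk must cross its boundary circle. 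With the corrected facts the argument does close: the disk is contained in $\R^2_+$, and for any direction $(\cos\phi,\sin\phi)$ with $\phi\in[0,\pi/2]$ the distance from $(1,1)$ to the corresponding ray equals $\abs{\sin\phi-\cos\phi}=\sqrt2\,\bigabs{\sin(\phi-\pi/4)}\leq 1$, so the ray meets the disk (at positive parameter, since $\cos\phi+\sin\phi\geq1$) and hence the circle; together with $0\in\cone\bigl(\mathbb S^1(\mathtt e^2)\bigr)$ this yields $\cone\bigl(\mathbb S^1(\mathtt e^2)\bigr)=\R^2_+$. If you prefer to avoid this geometry altogether, the paper's closed-form scaling $\xi_l=\mu_l+\nu_l+\sqrt{2\mu_l\nu_l}$ accomplishes the same in one line.
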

\begin{proof}
	The implication (c)$\Longrightarrow$(b) is trivial due to 
	$\partial ^\textup{M}\varphi_\textup{FB}(a,b)\subset\partial^\textup C\varphi_\textup{FB}(a,b)$
	for all $(a,b)\in\R^2$. Furthermore, (b)$\Longrightarrow$(a) follows easily by the fact
	$\mathbb B^1(\mathtt e^2)\subset\R^2_+$.
	It remains to show (a)$\Longrightarrow$(c).
	
	Thus, let $\bar x$ be a W-stationary point of \eqref{eq:MPOC}.
	Then, we find multipliers $\lambda_i\geq 0$ ($i\in I^g(\bar x)$), $\rho_j$ ($j\in\mathcal P$),
	$\mu_l\geq 0$ ($l\in I^{0+}(\bar x)\cup I^{00}(\bar x)$), 
	and $\nu_l\geq 0$ ($l\in I^{+0}(\bar x)\cup I^{00}(\bar x)$) such that
	\eqref{eq:WSt} holds. Let us assume $I^{00}(\bar x)\neq\varnothing$ (otherwise, the proof is straightforward).
	Pick an index $l\in I^{00}(\bar x)$ and define $\xi_l:=\mu_l+\nu_l+\sqrt{2\mu_l\nu_l}\geq 0$.
	In case $\xi_l=0$, we set $\alpha_l=\beta_l:=1-\tfrac{\sqrt 2}{2}$.
	Otherwise, we define $\alpha_l:=\mu_l/\xi_l$ and $\beta_l:=\nu_l/\xi_l$. 
	By construction, the relation 
	$(\alpha_l,\beta_l)\in\mathbb S^1(\mathtt e^2)=\partial^\textup{M}\varphi_\textup{FB}(G_l(\bar x),H_l(\bar x))$
	follows. 
	Setting $\xi_l:=\mu_l$, $\alpha_l:=1$, and $\beta_l:=0$ for all $l\in I^{0+}(\bar x)$ 
	as well as $\xi_l:=\nu_l$, $\alpha_l:=0$, and $\beta_l:=1$ for all $l\in I^{+0}(\bar x)$, we have
	\begin{align*}
	&0	=\nabla f(\bar x)
		+\sum\limits_{i\in I^g(\bar x)}\lambda_i\nabla g_i(\bar x)
		+\sum\limits_{j\in\mathcal P}\rho_j\nabla h_j(\bar x)\\
	&\qquad
		+\sum\limits_{l\in I^{\varphi_{\textup{FB}}(\bar x)}}
			\xi_l\bigl(\alpha_l\nabla G_l(\bar x)+\beta_l\nabla H_l(\bar x)\bigr)\\
	&\forall l\in I^{\varphi_\textup{FB}}(\bar x)\colon\,\xi_l\geq 0,\\
	&\forall l\in I^{\varphi_\textup{FB}}(\bar x)\colon\,(\alpha_l,\beta_l)\in 
		\partial^\textup{M}\varphi_\textup{FB}(G_l(\bar x),H_l(\bar x)),
	\end{align*}
	i.e.\ $\bar x$ is a KKT point of \hyperref[eq:MPOC_NCP]{\textup{MPOC}$(\varphi_\textup{FB})$}
	w.r.t.\ Mordukhovich's subdifferential.
\end{proof}

Let us briefly point the reader's attention to the fact that the use of Mordukhovich's 
subdifferential construction w.r.t.\ the function $\varphi$ in the KKT system associated
with \eqref{eq:MPOC_NCP} does not automatically lead to the identification of M-stationary
points of \eqref{eq:MPOC} as \cref{prop:MPOC_FB} demonstrates.

The above \cref{prop:MPOC_KS,prop:MPOC_min,prop:MPOC_FB} suggest to solve \eqref{eq:MPOC_NCP} 
instead of \eqref{eq:MPOC} in order to identify stationary points of the latter. Noting
that at least \hyperref[eq:MPOC_NCP]{\textup{MPOC}$(\varphi_\textup{KS})$} is a smooth
problem, this can be done exploiting standard solvers from nonlinear programming. 
Suitable methods from nonsmooth optimization can be used to tackle
\hyperref[eq:MPOC_NCP]{\textup{MPOC}$(\varphi_\textup{min })$} and
\hyperref[eq:MPOC_NCP]{\textup{MPOC}$(\varphi_\textup{FB})$} numerically.

\section{Tranformation into other disjunctive programs}\label{sec:disjunctive_modification}

\subsection{Relations to switching-constrained programming}\label{sec:SC_ref}

Let us consider the switching-constrained optimization problem
\begin{equation}\label{eq:MPOC_SC}\tag{SC-MPOC}
	\begin{aligned}
		f(x)&\,\to\,\min\limits_{x,y,z}&&&\\
			g_i(x)&\,\leq\,0&\qquad&i\in\mathcal M&\\
			h_j(x)&\,=\,0&&j\in\mathcal P&\\
			y_l,z_l&\,\leq\,0&&l\in\mathcal Q&\\
			(G_l(x)-y_l)(H_l(x)-z_l)&\,=\,0&&l\in\mathcal Q&
	\end{aligned}
\end{equation}
associated with \eqref{eq:MPOC}. One can easily check that for each feasible point $\bar x\in X$
of \eqref{eq:MPOC}, we find $\bar y,\bar z\in\R^q$ such that $(\bar x,\bar y,\bar z)$ is feasible
to \eqref{eq:MPOC_SC}. On the contrary, if $(\tilde x,\tilde y,\tilde z)\in\R^n\times\R^q\times\R^q$
is feasible to \eqref{eq:MPOC_SC}, then $\tilde x$ is feasible to \eqref{eq:MPOC}.
This observation has been used in \cite[Section~7.1]{Mehlitz2019} in order to show that
\eqref{eq:MPOC} and \eqref{eq:MPOC_SC} are somehow equivalent w.r.t.\ global
minimizers while the local minimizers of \eqref{eq:MPOC} can be found among
the local minimizers of \eqref{eq:MPOC_SC}. Moreover, it has been shown that whenever
$(\tilde x,\tilde y,\tilde z)$ is a local minimizer of \eqref{eq:MPOC_SC} where 
$I^{0-}(\tilde x)\cup I^{-0}(\tilde x)\cup I^{00}(\tilde x)=\varnothing$ holds, then $\tilde x$
is a local minimizer of \eqref{eq:MPOC}. These results justify to consider the switching model
\eqref{eq:MPOC_SC} instead of \eqref{eq:MPOC}. 
However, one has to notice that this transformation comes for the price of $2q$ additional slack
variables and potential artificial local minimizers.

Let us compare \eqref{eq:MPOC} and \eqref{eq:MPOC_SC} w.r.t.\ stationary points
since local minimizers of \eqref{eq:MPOC} correspond to local minimizers of
\eqref{eq:MPOC_SC} which satisfy certain stationarity conditions under validity
of constraint qualifications.
It follows from 
\cite[Section~7.2]{Mehlitz2019} that the W-, M-, and S-stationary points
of \eqref{eq:MPOC} can be found among the W$_\textup{SC}$-, M$_\textup{SC}$-, and S$_\textup{SC}$-
stationary points of \eqref{eq:MPOC_SC}. As we will see below,
the converse statement is also true in certain situations.
\begin{proposition}\label{prop:St_sur_SC}
	Let $(\bar x,\bar y,\bar z)\in\R^n\times\R^q\times\R^q$ be feasible to \eqref{eq:MPOC_SC}
	and assume that the index sets $I^{0-}(\bar x)$ and $I^{-0}(\bar x)$ are empty.
	If $(\bar x,\bar y,\bar z)$ is W$_\textup{SC}$-stationary (M$_\textup{SC}$-stationary, S$_\textup{SC}$-stationary) 
	for \eqref{eq:MPOC_SC},
	then it is W-stationary	(M-stationary, S-stationary) for \eqref{eq:MPOC}.
\end{proposition}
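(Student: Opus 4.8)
The plan is to translate the switching-constrained stationarity conditions at $(\bar x,\bar y,\bar z)$ into the or-constrained stationarity conditions at $\bar x$ by carefully matching the index sets and the multipliers attached to the $\nabla\tilde G_l$, $\nabla\tilde H_l$ terms. The first step is to write out explicitly what the switching constraints $\tilde G_l(x,y,z):=G_l(x)-y_l$ and $\tilde H_l(x,y,z):=H_l(x)-z_l$ look like at the feasible point. Since $y_l,z_l\le 0$ and $(G_l(\bar x)-\bar y_l)(H_l(\bar x)-\bar z_l)=0$, I would determine, for each $l\in\mathcal Q$, whether $l$ lies in $I^{\tilde G}(\bar x,\bar y,\bar z)$, $I^{\tilde H}(\bar x,\bar y,\bar z)$, or $I^{\tilde G\tilde H}(\bar x,\bar y,\bar z)$, and relate these to the MPOC index sets $I^{0+}(\bar x)$, $I^{+0}(\bar x)$, $I^{00}(\bar x)$, etc. The hypothesis $I^{0-}(\bar x)=I^{-0}(\bar x)=\varnothing$ is exactly what rules out the pathological configurations where a switching-stationary multiplier would have no home in the MPOC system; this is where that assumption must be used, and I expect checking this bookkeeping to be the main obstacle.

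Next I would exploit the block structure of the gradients. Because $\bar y,\bar z$ enter $\tilde G_l,\tilde H_l$ only through the trivial partial derivatives ($\partial\tilde G_l/\partial y_l=-1$, etc.), the components of the W$_\textup{SC}$-stationarity equation corresponding to the $y$- and $z$-variables decouple from the $x$-component. I expect the $y_l$- and $z_l$-rows (together with the sign conditions $y_l,z_l\le 0$ and their own multipliers $\lambda$ from the constraints $y_l,z_l\le 0$) to force $\tilde\mu_l\ge 0$ and $\tilde\nu_l\ge 0$ on the relevant indices, and to force certain $\tilde\mu_l$ or $\tilde\nu_l$ to vanish when $l$ is not in the corresponding active set. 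The $x$-component of the stationarity equation then reads
\[
	0=\nabla f(\bar x)
		+\sum_{i\in I^g(\bar x)}\lambda_i\nabla g_i(\bar x)
		+\sum_{j\in\mathcal P}\rho_j\nabla h_j(\bar x)
		+\sum_{l}\tilde\mu_l\nabla G_l(\bar x)
		+\sum_{l}\tilde\nu_l\nabla H_l(\bar x),
\]
and the task reduces to showing that, after discarding indices whose coefficients are forced to be zero, the remaining sums range exactly over $I^{0+}(\bar x)\cup I^{00}(\bar x)$ for $\mu$ and $I^{+0}(\bar x)\cup I^{00}(\bar x)$ for $\nu$, with nonnegative coefficients $\mu_l:=\tilde\mu_l$, $\nu_l:=\tilde\nu_l$. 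This would establish W-stationarity of $\bar x$ for \eqref{eq:MPOC}.

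For the M$_\textup{SC}$ and S$_\textup{SC}$ cases, I would simply carry along the extra multiplier conditions on the biactive index set. I expect the biactive set $I^{\tilde G\tilde H}(\bar x,\bar y,\bar z)$ to correspond (under the emptiness hypothesis) precisely to $I^{00}(\bar x)$, so that the condition $\tilde\mu_l\tilde\nu_l=0$ transfers verbatim to $\mu_l\nu_l=0$, yielding M-stationarity, and likewise $\tilde\mu_l=\tilde\nu_l=0$ yields $\mu_l=\nu_l=0$, yielding S-stationarity. Since the multipliers are literally identified, $\mu_l=\tilde\mu_l$ and $\nu_l=\tilde\nu_l$, no new computation is needed beyond verifying the index-set identification. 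The main obstacle therefore is the first paragraph's case analysis: I must confirm that the assumption $I^{0-}(\bar x)=I^{-0}(\bar x)=\varnothing$ is sufficient to guarantee that every switching-stationary configuration maps to a legitimate or-stationary configuration without leftover terms, and that no index of type $I^{--}$, $I^{-+}$, or $I^{+-}$ contributes an active constraint (these should be inactive and thus carry zero multipliers, which I would verify directly from the strict inequalities defining them).
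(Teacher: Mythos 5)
Your first two paragraphs follow the same route as the paper's proof, and the W-case goes through: the $y_l$- and $z_l$-rows of the W$_\textup{SC}$-stationarity system eliminate the multipliers of the bounds $y_l\leq 0$, $z_l\leq 0$ and leave $\tilde\mu_l\geq 0$, $\tilde\mu_l\bar y_l=0$ on $I^{\tilde G}(\bar x,\bar y,\bar z)\cup I^{\tilde G\tilde H}(\bar x,\bar y,\bar z)$ (similarly for $\tilde\nu_l$), so a nonvanishing $\tilde\mu_l$ forces $G_l(\bar x)=\bar y_l=0$ and hence, by $I^{0-}(\bar x)=\varnothing$, an index in $I^{0+}(\bar x)\cup I^{00}(\bar x)$. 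One piece of your bookkeeping is wrong even here: indices in $I^{-+}(\bar x)$, $I^{+-}(\bar x)$, $I^{--}(\bar x)$ are \emph{not} switching-inactive --- for $l\in I^{-+}(\bar x)$ feasibility even forces $\bar y_l=G_l(\bar x)<0$, so $l\in I^{\tilde G}(\bar x,\bar y,\bar z)$ --- rather, their multipliers are annihilated by the complementarity $\tilde\mu_l\bar y_l=0$ with $\bar y_l<0$. The conclusion of the W-case survives this correction.

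The genuine gap is the identification $I^{\tilde G\tilde H}(\bar x,\bar y,\bar z)=I^{00}(\bar x)$ on which your M- and S-cases rest; it is false in both directions. The inclusion ``$\subset$'' fails because of the $I^{--}$-indices just mentioned (harmlessly, for the same complementarity reason). The inclusion ``$\supset$'' fails decisively: for $l\in I^{00}(\bar x)$, feasibility of \eqref{eq:MPOC_SC} only forces $\bar y_l\bar z_l=0$, so $\bar y_l=0>\bar z_l$ is possible, in which case $\tilde H_l(\bar x,\bar y,\bar z)=-\bar z_l>0$ and $l$ lies in $I^{\tilde G}(\bar x,\bar y,\bar z)$ only; then neither the M$_\textup{SC}$-/S$_\textup{SC}$-conditions (which concern only $I^{\tilde G\tilde H}$) nor $\tilde\mu_l\bar y_l=0$ restrict $\tilde\mu_l$ beyond nonnegativity. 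For the M-case this is repairable, and this repair --- not a verbatim transfer --- is the correct argument: for such $l$ only one of the two switching multipliers exists at all, the other is taken as zero in the MPOC certificate, so $\mu_l\nu_l=0$ holds trivially. For the S-case no repair exists, because the surviving $\tilde\mu_l$ may be positive; in fact the S-part of \cref{prop:St_sur_SC} is contradicted by the paper's own \cref{ex:additional_stationary_points}: the point $(\bar x,\bar y,\bar z)=(0,0,0,-2)$ is feasible to \eqref{eq:simple_or_constrained_program_SC}, satisfies $I^{0-}(\bar x)=I^{-0}(\bar x)=\varnothing$, and has empty biactive set, hence is vacuously S$_\textup{SC}$-stationary (with $\tilde\mu_1=2$); yet $\bar x=(0,0)$ has $I^{00}(\bar x)=\{1\}$ and is only M-stationary for \eqref{eq:simple_or_constrained_program}, since S-stationarity there would require $\nabla f(\bar x)=(-2,0)$ to vanish. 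Be aware that the paper's proof stumbles at exactly this spot: display \eqref{eq:biactive_set_MPOC_SC} asserts $I^{GH}(\bar x,\bar y,\bar z)=I^{00}(\bar x)\cup I^{--}(\bar x)$, and it is precisely the false ``$\supset$'' half of that claim which the M- and S-cases invoke (the M-conclusion is still true via the argument above; the S-conclusion is not). A correct S-statement needs the additional hypothesis $\bar y_l=\bar z_l=0$ for all $l\in I^{00}(\bar x)$, which guarantees $I^{00}(\bar x)\subset I^{\tilde G\tilde H}(\bar x,\bar y,\bar z)$.
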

\begin{proof}
	First, we set
	\begin{align*}
		I^{G}(\bar x,\bar y,\bar z)&:=\{l\in\mathcal Q\,|\,G_l(\bar x)=\bar y_l\,\land\,H_l(\bar x)\neq\bar z_l\},\\
		I^{H}(\bar x,\bar y,\bar z)&:=\{l\in\mathcal Q\,|\,G_l(\bar x)\neq\bar y_l\,\land\,H_l(\bar x)=\bar z_l\},\\
		I^{GH}(\bar x,\bar y,\bar z)&:=\{l\in\mathcal Q\,|\,G_l(\bar x)=\bar y_l\,\land\,H_l(\bar x)=\bar z_l\}.
	\end{align*}
	Let $(\bar x,\bar y,\bar z)$ be W$_\textup{SC}$-stationary for \eqref{eq:MPOC_SC}.
	Then, after elimination of the multipliers corresponding to the 
	inequality constraints on the variables $\bar y$ and $\bar z$,
	there are multipliers $\lambda_i\geq 0$ ($i\in I^g(\bar x)$), $\rho_j$ ($j\in\mathcal P$),
	$\mu_l\geq 0$ ($l\in I^G(\bar x,\bar y,\bar z)\cup I^{GH}(\bar x,\bar y,\bar z)$), and
	$\nu_l\geq 0$ ($l\in I^H(\bar x,\bar y,\bar z)\cup I^{GH}(\bar x,\bar y,\bar z)$) which satisfy
	\begin{equation}\label{eq:WSt_sur_SC}
		\begin{split}
		&0=\nabla f(\bar x)+\sum\limits_{i\in I^g(\bar x)}\lambda_i\nabla g_i(\bar x)
			+\sum\limits_{j\in\mathcal P}\rho_j\nabla h_j(\bar x)\\
			&\qquad
			+\sum\limits_{l\in I^G(\bar x,\bar y,\bar z)\cup I^{GH}(\bar x,\bar y,\bar z)}\mu_l\nabla G_l(\bar x)
			+\sum\limits_{l\in I^H(\bar x,\bar y,\bar z)\cup I^{GH}(\bar x,\bar y,\bar z)}\nu_l\nabla H_l(\bar x)\\
		&\forall l\in I^G(\bar x,\bar y,\bar z)\cup I^{GH}(\bar x,\bar y,\bar z)\colon\,\mu_l\bar y_l=0,\\
		&\forall l\in I^H(\bar x,\bar y,\bar z)\cup I^{GH}(\bar x,\bar y,\bar z)\colon\,\nu_l\bar z_l=0.
		\end{split}
	\end{equation}
	Obviously, we have
	\[
		\{l\in I^G(\bar x,\bar y,\bar z)\cup I^{GH}(\bar x,\bar y,\bar z)\,|\,\bar y_l=0\}
		=
		\{l\in\mathcal Q\,|\,G_l(\bar x)=\bar y_l=0\}
		=
		I^{0+}(\bar x)\cup I^{00}(\bar x)
	\]
	and
	\[
		\{l\in I^H(\bar x,\bar y,\bar z)\cup I^{GH}(\bar x,\bar y,\bar z)\,|\,\bar z_l=0\}
		=
		\{l\in\mathcal Q\,|\,H_l(\bar x)=\bar z_l=0\}
		=
		I^{+0}(\bar x)\cup I^{00}(\bar x)
	\]
	from $I^{0-}(\bar x)=I^{-0}(\bar x)=\varnothing$. Thus, the multiplier $\mu_l$ can be positive only for
	indices $l\in I^{0+}(\bar x)\cup I^{00}(\bar x)$ while
	$\nu_l$ can be positive only for $l\in I^{+0}(\bar x)\cup I^{00}(\bar x)$.	
	Consequently, \eqref{eq:WSt_sur_SC} shows that $\bar x$ is W-stationary for \eqref{eq:MPOC}.
	
	Next, we suppose that $(\bar x,\bar y,\bar z)$ is M$_\textup{SC}$-stationary for \eqref{eq:MPOC_SC}.
	Then, the above multipliers additionally need to satisfy
	\[
		\forall l\in I^{GH}(\bar x,\bar y,\bar z)\colon\quad \mu_l\nu_l=0.
	\]
	Clearly, the assumption $I^{0-}(\bar x)=I^{-0}(\bar x)=\varnothing$ yields
	\begin{equation}\label{eq:biactive_set_MPOC_SC}
		I^{GH}(\bar x,\bar y,\bar z)
		=
		\{l\in\mathcal Q\,|\,G_l(\bar x)=\bar y_l\,\land\,H_l(\bar x)=\bar z_l\}
		=
		I^{00}(\bar x)\cup I^{--}(\bar x).
	\end{equation}
	Thus, the above considerations lead to $\mu_l\nu_l=0$ for all $l\in I^{00}(\bar x)$,
	i.e.\ $\bar x$ is M-stationary for \eqref{eq:MPOC}.
	
	Finally, suppose that $(\bar x,\bar y,\bar z)$ is S$_\textup{SC}$-stationary for \eqref{eq:MPOC_SC}.
	In this case, the above multipliers additionally satisfy the condition
	\[
		\forall l\in I^{GH}(\bar x,\bar y,\bar z)\colon\quad \mu_l=0\,\land\,\nu_l=0.
	\]
	Then, \eqref{eq:biactive_set_MPOC_SC} yields that $\mu_l=0$ and $\nu_l=0$ hold for
	all $l\in I^{00}(\bar x)$ which means that $\bar x$ is already S-stationary for
	\eqref{eq:MPOC}.	
\end{proof}

Let us visualize the assertion of \cref{prop:St_sur_SC} by means of the following toy program
taken from \cite{Mehlitz2019}.
\begin{example}\label{ex:additional_stationary_points}
	Consider the simple or-constrained program
		\begin{equation}\label{eq:simple_or_constrained_program}
			\begin{split}
				(x_1-1)^2&\,\to\,\min\\
				x_1\,\leq\,0\,\lor\,x_2&\,\leq\,0.
			\end{split}
		\end{equation}
	The set of its global minimizers is given by $G:=\{(1,x_2)\,|\,x_2\leq 0\}$ while there
	are additional local minimizers at all points from $L:=\{(0,x_2)\,|\,x_2>0\}$. One can
	easily check that all points from $G$ are S-stationary for \eqref{eq:simple_or_constrained_program}
	while the points from $L$ are only M-stationary. Note that there is an additional
	M-stationary point at $(0,0)$ which is not a local minimizer of 
	\eqref{eq:simple_or_constrained_program}.
	
	Now, we consider the switching-constrained surrogate problem 
		\begin{equation}
			\label{eq:simple_or_constrained_program_SC}
			\begin{split}
				(x_1-1)^2&\,\to\,\min\limits_{x,y,z}\\
				y,z&\,\leq\,0\\
				(x_1-y)(x_2-z)&\,=\,0
			\end{split}
		\end{equation}
	associated with \eqref{eq:simple_or_constrained_program}. By construction, all local
	minimizers and stationary points of \eqref{eq:simple_or_constrained_program} can be found
	among the local minimizers and stationary points of \eqref{eq:simple_or_constrained_program_SC}.	
	It has been mentioned in \cite[Example~7.1]{Mehlitz2019} that 
	\eqref{eq:simple_or_constrained_program_SC} possesses local minimizers whose $x$-components do
	not correspond to local minimizers of \eqref{eq:simple_or_constrained_program} e.g.\ at the points
	$(\bar x,\bar y,\bar z):=(0,0,0,-2)$ and $(\tilde x,\tilde y,\tilde z):=(0,-1,0,-2)$.
	Due to \cite[Theorem~7.2]{Mehlitz2019}, these points are M$_\textup{SC}$-stationary for
	\eqref{eq:simple_or_constrained_program_SC} since the latter is a switching-constrained
	program whose feasible region is defined via affine data functions only. 
	As mentioned above, $\bar x$ is an M-stationary point
	of \eqref{eq:simple_or_constrained_program} while $\tilde x$ is not. 
	Finally, observe that $I^{00}(\bar x)=\{1\}$ and $I^{0-}(\tilde x)=\{1\}$ hold.	
\end{example}

Due to the facts discussed above, it is reasonable to focus on the
computation of stationary points of \eqref{eq:MPOC_SC} 
in order to solve \eqref{eq:MPOC}. However, one has to keep in mind that
there are stationary solutions of \eqref{eq:MPOC_SC} that are not 
stationary for \eqref{eq:MPOC}, see \cref{prop:St_sur_SC} and \cref{ex:additional_stationary_points}.

In \cite{KanzowMehlitzSteck2019}, the authors suggest to modify relaxation
techniques for the numerical handling of MPCCs in order to tackle
switching-constrained optimization problems. The presented computational results
depict that adapted global relaxation schemes due to Scholtes, see \cite{Scholtes2001},
as well as Kanzow and Schwartz, see \cite{KanzowSchwartz2013}, are suitable for
that purpose. The adapted method due to Scholtes turned out to be the more robust
one which is why we briefly comment on this approach below. For details, we 
refer the interested reader to \cite{KanzowMehlitzSteck2019}.

For some parameter $t\geq 0$, let us investigate the relaxed nonlinear program
\begin{equation}\label{eq:MPOC_SC_Scholtes}\tag{SC-MPOC$_\textup{S}(t)$}
	\begin{aligned}
		f(x)&\,\to\,\min\limits_{x,y,z}&&&\\
			g_i(x)&\,\leq\,0&\qquad&i\in\mathcal M&\\
			h_j(x)&\,=\,0&&j\in\mathcal P&\\
			y_l,z_l&\,\leq\,0&&l\in\mathcal Q&\\
			-t\,\leq\,(G_l(x)-y_l)(H_l(x)-z_l)&\,\leq\,t&&l\in\mathcal Q.&
	\end{aligned}
\end{equation}
It possesses $m+4q$ inequality and $p$ equality constraints.
Clearly, for positive $t$, the feasible set of \eqref{eq:MPOC_SC_Scholtes} is
a superset of the feasible set associated with \eqref{eq:MPOC_SC}.
Moreover, the family of feasible sets associated with \eqref{eq:MPOC_SC_Scholtes} is
nested w.r.t.\ $t$ in such a way that for $t=0$, the feasible set of \eqref{eq:MPOC_SC}
is restored. Thus, for the numerical solution of \eqref{eq:MPOC_SC}, one
can choose a sequence $\{t_k\}_{k\in\N}$ of positive relaxation parameters converging
to zero and solve the associated relaxed nonlinear problems 
\hyperref[eq:MPOC_SC_Scholtes]{\textup{SC-MPOC}$_\textup{S}(t_k)$} using standard solvers
from nonlinear programming. Supposing that the computed sequence converges, its
limit point is feasible to \eqref{eq:MPOC_SC}. Furthermore, suitable assumptions 
can be imposed to guarantee that this limit point is at least W$_\textup{SC}$-stationary,
i.e.\ this approach is likely to produce W-stationary points of \eqref{eq:MPOC},
see \cite[Theorem~3.2]{KanzowMehlitzSteck2019}.

\subsection{Relations to complementarity-constrained programming}\label{sec:ref_CC}

	Let as consider the complementarity-constrained optimization problem
	\begin{equation}\label{eq:MPOC_CC}\tag{CC-MPOC}
	\begin{aligned}
		f(x)&\,\to\,\min\limits_{x,y,z}&&&\\
			g_i(x)&\,\leq\,0&\qquad&i\in\mathcal M&\\
			h_j(x)&\,=\,0&&j\in\mathcal P&\\
			G_l(x)-y_l&\,\leq\,0&&l\in\mathcal Q&\\
			H_l(x)-z_l&\,\leq\,0&&l\in\mathcal Q&\\
			0\,\leq\,y_l\,\perp\,z_l&\,\geq\,0&&l\in\mathcal Q&
	\end{aligned}
	\end{equation}
	associated with \eqref{eq:MPOC}. 
	Fix an arbitrary feasible point $\bar x\in X$ of \eqref{eq:MPOC} and define
	$\bar y,\bar z\in\R^q$ as stated below:
	\begin{equation}\label{eq:feasibility_MPOC_CC}
		\begin{split}
			\forall l\in\mathcal Q\colon\quad
			\bar y_l&:=
				\begin{cases}
					0		&l\in I^{-0}(\bar x)\cup I^{-+}(\bar x)\cup I^{0+}(\bar x)\cup I^{--}(\bar x)\cup I^{00}(\bar x),\\
					1		&l\in I^{0-}(\bar x),\\
					2G_l(\bar x)	&l\in I^{+-}(\bar x)\cup I^{+0}(\bar x),
				\end{cases}\\
			\bar z_l&:=
				\begin{cases}
					0		&l\in I^{0-}(\bar x)\cup I^{+-}(\bar x)\cup I^{+0}(\bar x)\cup I^{--}(\bar x)\cup I^{00}(\bar x),\\
					1		&l\in I^{-0}(\bar x),\\
					2H_l(\bar x)	&l\in I^{-+}(\bar x)\cup I^{0+}(\bar x).
				\end{cases}
		\end{split}
	\end{equation}
	Clearly, $(\bar x,\bar y,\bar z)$ is feasible to \eqref{eq:MPOC_CC}.
	On the other hand, one can easily check that for each feasible point 
	$(\tilde x,\tilde y,\tilde z)\in\R^n\times\R^q\times\R^q$ of \eqref{eq:MPOC_CC},
	$\tilde x$ is feasible to \eqref{eq:MPOC}.
	
	Based on this observation, we obtain the following result by standard arguments.
	\begin{proposition}\label{prop:relationship_CC}
		\begin{enumerate}
			\item Let $\bar x\in X$ be a locally (globally) optimal solution of \eqref{eq:MPOC}.
				Furthermore, let $\bar y,\bar z\in\R^q$ be the vectors defined in \eqref{eq:feasibility_MPOC_CC}.
				Then, $(\bar x,\bar y,\bar z)$ is a locally (globally) optimal solution of \eqref{eq:MPOC_CC}.
			\item Let $(\tilde x,\tilde y,\tilde z)\in\R^n\times\R^q\times\R^q$ be a globally optimal
				solution of \eqref{eq:MPOC_CC}. Then, $\tilde x$ is a globally optimal solution of \eqref{eq:MPOC}.
		\end{enumerate}
	\end{proposition}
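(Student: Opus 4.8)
The plan is to exploit that \eqref{eq:MPOC} and \eqref{eq:MPOC_CC} share the same objective $f$, which depends on the $x$-variable only, while the two feasibility observations preceding the statement identify the $x$-projection of the feasible set of \eqref{eq:MPOC_CC} with $X$. Let me denote the feasible set of \eqref{eq:MPOC_CC} by $\Gamma\subset\R^n\times\R^q\times\R^q$. The first observation (together with \eqref{eq:feasibility_MPOC_CC}) shows that every $x\in X$ admits a lift $(x,\bar y,\bar z)\in\Gamma$, while the second shows that every $(x,y,z)\in\Gamma$ satisfies $x\in X$; hence $\proj_x\Gamma=X$, where $\proj_x$ projects onto the $x$-component. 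Since the objective attains the common value $f(x)$ at any triple $(x,y,z)$, the optimal values of the two problems coincide and the correspondence of optimal points reduces to bookkeeping on these two facts.

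For the global assertions I would argue directly. In part~1, let $\bar x$ be globally optimal for \eqref{eq:MPOC} and let $(\bar x,\bar y,\bar z)$ be its lift from \eqref{eq:feasibility_MPOC_CC}. For an arbitrary $(x,y,z)\in\Gamma$ one has $x\in X$ by $\proj_x\Gamma=X$, whence $f(x,y,z)=f(x)\geq f(\bar x)=f(\bar x,\bar y,\bar z)$; thus $(\bar x,\bar y,\bar z)$ is globally optimal for \eqref{eq:MPOC_CC}. For part~2, let $(\tilde x,\tilde y,\tilde z)$ be globally optimal for \eqref{eq:MPOC_CC}; then $\tilde x\in X$. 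For arbitrary $x\in X$ I would lift it to some $(x,y,z)\in\Gamma$ via \eqref{eq:feasibility_MPOC_CC} and use global optimality of $(\tilde x,\tilde y,\tilde z)$ to obtain $f(\tilde x)=f(\tilde x,\tilde y,\tilde z)\leq f(x,y,z)=f(x)$, so $\tilde x$ is globally optimal for \eqref{eq:MPOC}.

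The local assertion of part~1 needs a little more care, and this is the only genuinely non-routine point. Let $\bar x$ be a local minimizer of \eqref{eq:MPOC}, so there is a neighborhood $V\subset\R^n$ of $\bar x$ with $f(\bar x)\leq f(x)$ for all $x\in X\cap V$. I would then take the product neighborhood $U:=V\times\R^q\times\R^q$ of $(\bar x,\bar y,\bar z)$ (intersected with a ball if a bounded neighborhood is preferred). For any $(x,y,z)\in\Gamma\cap U$ one has $x\in V$ and, by $\proj_x\Gamma=X$, also $x\in X$; consequently $f(x,y,z)=f(x)\geq f(\bar x)=f(\bar x,\bar y,\bar z)$, so $(\bar x,\bar y,\bar z)$ is a local minimizer of \eqref{eq:MPOC_CC}. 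The crux is merely that proximity of $(x,y,z)$ to $(\bar x,\bar y,\bar z)$ already forces proximity of $x$ to $\bar x$, and both the objective value and membership in $X$ depend on the $x$-coordinate alone, so the auxiliary variables $y,z$ play no role.

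Finally, I would remark on why part~2 is stated only for the global case. The converse local statement fails precisely because the slack variables enrich the feasible geometry: \eqref{eq:MPOC_CC} may possess spurious local minimizers whose $x$-component is not locally optimal for \eqref{eq:MPOC}, in complete analogy with the switching-constrained phenomenon exhibited in \cref{ex:additional_stationary_points}. No such loss occurs at the global level, which is exactly what the projection identity $\proj_x\Gamma=X$ guarantees.
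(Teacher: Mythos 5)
Your proposal is correct and follows exactly the route the paper intends: the paper omits the proof, stating that the result follows ``by standard arguments'' from the two feasibility observations (every $x\in X$ lifts via \eqref{eq:feasibility_MPOC_CC} to a feasible point of \eqref{eq:MPOC_CC}, and every feasible $(x,y,z)$ of \eqref{eq:MPOC_CC} has $x\in X$), which is precisely the projection identity you formalize and then exploit. Your handling of the local case via the product neighborhood $V\times\R^q\times\R^q$ is the natural filling-in of those standard arguments, so there is nothing to add.
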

	
	The upcoming example
	shows that the second statement of \cref{prop:relationship_CC} cannot
	be extended to local minimizers. This observation parallels the one for the switching-constrained
	reformulation of \eqref{eq:MPOC} discussed in \cref{sec:SC_ref}.  
	\begin{example}\label{ex:local_relationship}
		Let us consider \eqref{eq:simple_or_constrained_program}
		as well as its complementarity-constrained reformulation
		\begin{equation}\label{eq:simple_CC_ref}
			\begin{split}
				(x_1-1)^2&\,\to\,\min\limits_{x,y,z}\\
				x_1-y&\,\leq\,0\\
				x_2-z&\,\leq\,0\\
				0\,\leq y\,\perp\,z&\,\geq\,0.
			\end{split}
		\end{equation}
		Using similar arguments as in \cite[Example~7.1]{Mehlitz2019}, one can check that the
		points $(0,-1,0,1)$ and $(0,0,0,1)$ are local minimizers of \eqref{eq:simple_CC_ref} which do 
		not correspond to the minimizers of \eqref{eq:simple_or_constrained_program} 
		characterized in \cref{ex:additional_stationary_points}.
	\end{example}
	
	Similar to \cite[Lemma~7.2]{Mehlitz2019}, we obtain the following result.
	\begin{proposition}\label{prop:local_relationship_CC}
		Let $(\bar x,\bar y,\bar z)\in\R^n\times\R^q\times\R^q$ be a locally optimal solution of \eqref{eq:MPOC_CC}
		and assume that the index sets $I^{-0}(\bar x)$, $I^{0-}(\bar x)$, and $I^{00}(\bar x)$ are empty.
		Then, $\bar x$ is a local minimizer of \eqref{eq:MPOC}.
	\end{proposition}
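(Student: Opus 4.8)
The plan is to argue by a lifting construction. Since $(\bar x,\bar y,\bar z)$ is a local minimizer of \eqref{eq:MPOC_CC}, there is a neighborhood $U\subset\R^n\times\R^q\times\R^q$ of $(\bar x,\bar y,\bar z)$ such that $f(x)\geq f(\bar x)$ holds for every point of $U$ which is feasible to \eqref{eq:MPOC_CC}. It therefore suffices to show that each $x\in X$ sufficiently close to $\bar x$ can be completed to a triple $(x,y,z)$ which is feasible to \eqref{eq:MPOC_CC} and satisfies $(x,y,z)\in U$; then $f(x)\geq f(\bar x)$, and local optimality of $\bar x$ for \eqref{eq:MPOC} follows. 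This mirrors the strategy behind \cite[Lemma~7.2]{Mehlitz2019} for the switching-constrained reformulation.

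First I would read off the local geometry from the emptiness hypothesis. Because $I^{-0}(\bar x)=I^{0-}(\bar x)=I^{00}(\bar x)=\varnothing$, every index $l\in\mathcal Q$ belongs to exactly one of $I^{--}(\bar x)$, $I^{-+}(\bar x)$, $I^{+-}(\bar x)$, $I^{0+}(\bar x)$, or $I^{+0}(\bar x)$, and in each of these the relevant strict sign of $G_l(\bar x)$ or $H_l(\bar x)$ is preserved for all $x$ near $\bar x$ by continuity. Feasibility of $(\bar x,\bar y,\bar z)$ together with $0\leq\bar y_l\perp\bar z_l\geq 0$ moreover forces $\bar y_l=0$ and $\bar z_l\geq H_l(\bar x)>0$ whenever $H_l(\bar x)>0$, and symmetrically $\bar z_l=0$ and $\bar y_l\geq G_l(\bar x)>0$ whenever $G_l(\bar x)>0$.

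Guided by this, I would define the lift for $x$ near $\bar x$ index by index: for $l$ with $H_l(\bar x)>0$ (that is, $l\in I^{-+}(\bar x)\cup I^{0+}(\bar x)$) set $y_l:=0$ and $z_l:=\bar z_l+H_l(x)-H_l(\bar x)$; for $l$ with $G_l(\bar x)>0$ (that is, $l\in I^{+-}(\bar x)\cup I^{+0}(\bar x)$) set $z_l:=0$ and $y_l:=\bar y_l+G_l(x)-G_l(\bar x)$; and for $l\in I^{--}(\bar x)$ simply keep $y_l:=\bar y_l$ and $z_l:=\bar z_l$. Nonnegativity and complementarity of $(y_l,z_l)$ are immediate. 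For the coupling inequalities in the first group one computes $H_l(x)-z_l=H_l(\bar x)-\bar z_l\leq 0$, while $G_l(x)-y_l=G_l(x)\leq 0$, the latter holding because for $l\in I^{0+}(\bar x)$ the or-constraint at the feasible $x$ together with $H_l(x)>0$ forces $G_l(x)\leq 0$, and for $l\in I^{-+}(\bar x)$ one has $G_l(x)<0$ by continuity; the second group is symmetric, and for $l\in I^{--}(\bar x)$ both $G_l(x)$ and $H_l(x)$ remain negative while $\bar y_l,\bar z_l\geq 0$. Since all corrections are continuous in $x$ and vanish at $\bar x$, the triple $(x,y,z)$ tends to $(\bar x,\bar y,\bar z)$ and thus lies in $U$ once $x$ is close enough to $\bar x$, which completes the argument.

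I expect the delicate point to be the continuity of this lift, which is exactly where the three excluded index sets enter. For an index $l\in I^{-0}(\bar x)$ with $\bar y_l>0$ and $\bar z_l=0$, a perturbation rendering $H_l(x)>0$ would force $z_l\geq H_l(x)>0$ and hence $y_l=0$ by complementarity, so the completion would have to jump away from $\bar y_l>0$ and leave $U$; the analogous breakdown occurs for $I^{0-}(\bar x)$ and $I^{00}(\bar x)$, and \cref{ex:local_relationship} confirms that these configurations genuinely produce additional local minimizers of \eqref{eq:MPOC_CC}. The main obstacle is thus the case analysis certifying that, under the stated hypothesis, the complementarity branch active at $\bar x$ can be preserved for all nearby feasible $x$, so that no such jump is ever needed.
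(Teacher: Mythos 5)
Your proof is correct and is essentially the argument the paper intends: the paper omits the proof, deferring to the analogous lifting argument of \cite[Lemma~7.2]{Mehlitz2019}, and your index-by-index completion of a nearby feasible $x$ to a feasible triple $(x,y,z)$ converging to $(\bar x,\bar y,\bar z)$ is exactly that argument carried out for \eqref{eq:MPOC_CC}. Your case analysis (including the use of the emptiness of $I^{-0}(\bar x)$, $I^{0-}(\bar x)$, $I^{00}(\bar x)$ to keep the active complementarity branch stable, as illustrated by \cref{ex:local_relationship}) is complete and correct.
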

	
	Summarizing the above facts, the transformation \eqref{eq:MPOC_CC} comes for the price of $2q$
	slack variables and potential additional local minimizers. These are precisely those disadvantages we
	had to face when using the switching-constrained surrogate \eqref{eq:MPOC_SC}.
	
	Finally, we want to compare \eqref{eq:MPOC} and \eqref{eq:MPOC_CC} w.r.t.\ stationary points.
	The upcoming result shows that we can find the W-, M-, and S-stationary points of \eqref{eq:MPOC}
	among the C$_\textup{CC}$-, M$_\textup{CC}$-, and S$_\textup{CC}$-stationary points of \eqref{eq:MPOC_CC}.
	\begin{proposition}\label{prop:St_MPOC_to_St_CC}
		Let $\bar x\in X$ be a W-stationary (M-stationary, S-stationary) point of \eqref{eq:MPOC}.
		Furthermore, let $\bar y,\bar z\in\R^q$ be the vectors defined in \eqref{eq:feasibility_MPOC_CC}.
		Then, $(\bar x,\bar y,\bar z)$ is C$_\textup{CC}$-stationary (M$_\textup{CC}$-stationary, 
		S$_\textup{CC}$-stationary) for \eqref{eq:MPOC_CC}.
	\end{proposition}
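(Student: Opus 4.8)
The plan is to construct the required multipliers for \eqref{eq:MPOC_CC} directly from the MPOC-multipliers furnished by the assumed stationarity of $\bar x$, exploiting the block structure of the stationarity system of \eqref{eq:MPOC_CC} with respect to the variables $x$, $y$, and $z$. Since $f$, $g$, and $h$ do not depend on $(y,z)$ and each slack-coupling constraint involves only a single component of $y$ or $z$, the $y$- and $z$-parts of the gradient equation will split off componentwise and pin down the complementarity multipliers, leaving an $x$-part that should collapse back onto \eqref{eq:WSt}.

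First I would inspect the values prescribed in \eqref{eq:feasibility_MPOC_CC} in order to identify the active sets at $(\bar x,\bar y,\bar z)$. A routine case distinction over the eight index sets partitioning $\mathcal Q$ shows that $G_l(x)-y_l\leq 0$ is active precisely for $l\in I^{0+}(\bar x)\cup I^{00}(\bar x)$, that $H_l(x)-z_l\leq 0$ is active precisely for $l\in I^{+0}(\bar x)\cup I^{00}(\bar x)$, and that the complementarity index sets of \eqref{eq:MPOC_CC} at $(\bar x,\bar y,\bar z)$ read
\begin{align*}
	I^{0+}_\textup{CC}(\bar x,\bar y,\bar z)&=I^{-0}(\bar x)\cup I^{-+}(\bar x)\cup I^{0+}(\bar x),\\
	I^{+0}_\textup{CC}(\bar x,\bar y,\bar z)&=I^{0-}(\bar x)\cup I^{+-}(\bar x)\cup I^{+0}(\bar x),\\
	I^{00}_\textup{CC}(\bar x,\bar y,\bar z)&=I^{--}(\bar x)\cup I^{00}(\bar x).
\end{align*}

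Next I would write the W$_\textup{CC}$-stationarity system of \eqref{eq:MPOC_CC} blockwise. Denoting by $\gamma_l\geq 0$ ($l\in I^{0+}(\bar x)\cup I^{00}(\bar x)$) and $\delta_l\geq 0$ ($l\in I^{+0}(\bar x)\cup I^{00}(\bar x)$) the multipliers of the active constraints $G_l(x)-y_l\leq 0$ and $H_l(x)-z_l\leq 0$, the $y_l$- and $z_l$-components of the gradient equation yield $\bar\mu_l=-\gamma_l$ and $\bar\nu_l=-\delta_l$ on the biactive indices, while all remaining complementarity multipliers are forced to vanish. Starting from \eqref{eq:WSt}, I would set $\gamma_l:=\mu_l$ and $\delta_l:=\nu_l$ and carry over $\lambda_i,\rho_j$ unchanged; then the $x$-component of the \eqref{eq:MPOC_CC}-system coincides with \eqref{eq:WSt}, so $(\bar x,\bar y,\bar z)$ is W$_\textup{CC}$-stationary with
\[
	\bar\mu_l=-\mu_l\quad(l\in I^{0+}(\bar x)\cup I^{00}(\bar x)),\qquad
	\bar\nu_l=-\nu_l\quad(l\in I^{+0}(\bar x)\cup I^{00}(\bar x)),
\]
all others being zero.

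Finally I would check the refined sign conditions on $I^{00}_\textup{CC}(\bar x,\bar y,\bar z)=I^{--}(\bar x)\cup I^{00}(\bar x)$. On $I^{--}(\bar x)$ both $\bar\mu_l$ and $\bar\nu_l$ vanish, so every additional condition in \cref{def:St_MPCC} holds trivially, and on $I^{00}(\bar x)$ one computes $\bar\mu_l\bar\nu_l=\mu_l\nu_l\geq 0$, which already gives C$_\textup{CC}$-stationarity in the W-stationary case; the M-stationarity condition $\mu_l\nu_l=0$ transfers to $\bar\mu_l\bar\nu_l=0$, and S-stationarity $\mu_l=\nu_l=0$ gives $\bar\mu_l=\bar\nu_l=0\geq 0$, so the three implications follow. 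The main obstacle is the bookkeeping of the first step together with careful tracking of the sign convention in \cref{def:St_MPCC}: because the complementarity multipliers enter the gradient equation with a minus sign, the nonnegative quantities $\mu_l,\nu_l$ become nonpositive values $\bar\mu_l,\bar\nu_l$ on the biactive set, and it is exactly this sign flip that carries the nonnegativity of S-stationarity into the S$_\textup{CC}$-condition and turns $\mu_l\nu_l\geq 0$ into the Clarke condition; everything else is routine verification.
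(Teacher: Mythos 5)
Your proposal is correct and follows essentially the same route as the paper's proof: identify the active and biactive index sets induced by \eqref{eq:feasibility_MPOC_CC}, reuse the MPOC-multipliers $\lambda_i,\rho_j,\mu_l,\nu_l$ for the $x$-part, read off $\bar\mu=-\mu$ and $\bar\nu=-\nu$ (extended by zero) from the $y$- and $z$-components, and check the C/M/S conditions on $I^{00}_\textup{CC}(\bar x,\bar y,\bar z)=I^{--}(\bar x)\cup I^{00}(\bar x)$. Only your phrase that the relations $\bar\mu_l=-\gamma_l$, $\bar\nu_l=-\delta_l$ hold ``on the biactive indices'' is loosely worded --- they hold on all of $I^{0+}(\bar x)\cup I^{00}(\bar x)$ and $I^{+0}(\bar x)\cup I^{00}(\bar x)$, respectively, exactly as your displayed formula then correctly states.
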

	\begin{proof}
		By definition of $\bar y$ and $\bar z$, we obtain
		\begin{align*}
			I^{0+}_\textup{CC}(\bar x,\bar y,\bar z)&=I^{-0}(\bar x)\cup I^{-+}(\bar x)\cup I^{0+}(\bar x),\\
			I^{+0}_\textup{CC}(\bar x,\bar y,\bar z)&=I^{0-}(\bar x)\cup I^{+-}(\bar x)\cup I^{+0}(\bar x),\\
			I^{00}_\textup{CC}(\bar x,\bar y,\bar z)&=I^{--}(\bar x)\cup I^{00}(\bar x).
		\end{align*}
		Since $\bar x$ is W-stationary for \eqref{eq:MPOC}, we find multipliers $\lambda_i\geq 0$ ($i\in I^g(\bar x)$),
		$\rho_j$ ($j\in\mathcal P$), $\mu_l\geq 0$ ($l\in I^{0+}(\bar x)\cup I^{00}(\bar x)$), and
		$\nu_l\geq 0$ ($l\in I^{+0}(\bar x)\cup I^{00}(\bar x)$) which satisfy \eqref{eq:WSt}.
		Now set $\mu_l:=0$ for all $l\in\mathcal Q\setminus(I^{0+}(\bar x)\cup I^{00}(\bar x))$ as well as
		$\nu_l:=0$ for all $l\in\mathcal Q\setminus(I^{+0}(\bar x)\cup I^{00}(\bar x)))$. 
		Furthermore, fix $\bar\mu:=-\mu$ and $\bar\nu:=-\nu$.  
		Then, these multipliers solve 
		\begin{align*}
		&0
		=\nabla f(\bar x)
		+\sum\limits_{i\in I^g(\bar x)}\lambda_i\nabla g_i(\bar x)
		+\sum\limits_{j\in\mathcal P}\rho_j\nabla h_j(\bar x)\\
		&\qquad
		+\sum\limits_{l\in\mathcal Q}\mu_l\nabla G_l(\bar x)
		+\sum\limits_{l\in\mathcal Q}\nu_l\nabla H_l(\bar x),\\
		&0=-\mu-\bar \mu,\qquad 0=-\nu-\bar \nu,\\
		&\forall i\in I^g(\bar x)\colon\,\lambda_i\geq 0,\\
		&0=\mu\cdot (G(\bar x)-\bar y),\qquad 0=\nu\cdot (H(\bar x)-\bar z),\\
		&\forall l\in I^{+0}_\textup{CC}(\bar x,\bar y,\bar z)\colon\,\bar\mu_l=0,\\
		&\forall l\in I^{0+}_\textup{CC}(\bar x,\bar y,\bar z)\colon\,\bar\nu_l=0,\\
		&\forall l\in I^{00}_\textup{CC}(\bar x,\bar y,\bar z)\colon\,\bar\mu_l\bar\nu_l\geq 0
		\end{align*}
		which is the C$_\textup{CC}$-stationarity system of \eqref{eq:MPOC_CC} at $(\bar x,\bar y,\bar z)$, i.e.\
		the latter point is C$_\textup{CC}$-stationary for \eqref{eq:MPOC_CC}.
		
		If $\bar x$ is M-stationary (S-stationary) for \eqref{eq:MPOC}, then the multipliers 
		$\mu$ and $\nu$ from above additionally satisfy $\mu_l\nu_l=0$ ($\mu_l=0$ and $\nu_l=0$)
		for all $l\in I^{00}(\bar x)$. This means that the new multipliers $\bar\mu$ and $\bar\nu$
		particularly satisfy $\bar\mu_l\bar\nu_l=0$ ($\bar\mu_l= 0$ and $\bar\nu_l= 0$) for all
		$l\in I^{00}_\textup{CC}(\bar x,\bar y,\bar z)$ which implies that $(\bar x,\bar y,\bar z)$
		is M$_\textup{CC}$-stationary (S$_\textup{CC}$-stationary) for \eqref{eq:MPOC_CC}.
	\end{proof}
	
	Proceeding in a similar way as used for the proof of \cref{prop:St_sur_SC}, we can validate
	the following result.
	\begin{proposition}\label{prop:St_sur_CC}
		Let $(\bar x,\bar y,\bar z)\in\R^n\times\R^q\times\R^q$ be feasible to \eqref{eq:MPOC_CC}
		and assume that the index sets $I^{0-}(\bar x)$ and $I^{-0}(\bar x)$ are empty.
		If $(\bar x,\bar y,\bar z)$ is C$_\textup{CC}$-stationary (M$_\textup{CC}$-stationary, 
		S$_\textup{CC}$-stationary) for \eqref{eq:MPOC_CC},
		then it is W-stationary	(M-stationary, S-stationary) for \eqref{eq:MPOC}.
	\end{proposition}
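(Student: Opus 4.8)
The plan is to mirror the proof of \cref{prop:St_sur_SC}, using that the complementarity block $0\le y_l\perp z_l\ge 0$ of \eqref{eq:MPOC_CC} fulfils the same bookkeeping role as the switching block of \eqref{eq:MPOC_SC}; the only genuine work is to translate the \eqref{eq:MPOC_CC}-index sets $I^{0+}_\textup{CC}(\bar x,\bar y,\bar z)$, $I^{+0}_\textup{CC}(\bar x,\bar y,\bar z)$, and $I^{00}_\textup{CC}(\bar x,\bar y,\bar z)$ back into the \eqref{eq:MPOC}-index sets under the hypothesis $I^{0-}(\bar x)=I^{-0}(\bar x)=\varnothing$.

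First I would write down the C$_\textup{CC}$-stationarity system of \eqref{eq:MPOC_CC} at $(\bar x,\bar y,\bar z)$, introducing nonnegative multipliers $\sigma_l$ and $\tau_l$ for the active inequalities $G_l(x)-y_l\le 0$ and $H_l(x)-z_l\le 0$ alongside $\lambda_i$, $\rho_j$ and the complementarity multipliers $\bar\mu_l,\bar\nu_l$. Since $f$ is independent of $(y,z)$ and each slack $y_l$ (resp.\ $z_l$) occurs only in $G_l(x)-y_l\le 0$ (resp.\ $H_l(x)-z_l\le 0$) and in its own complementarity constraint, the partial stationarity conditions in the $y_l$- and $z_l$-directions reduce to the identities $\bar\mu_l=-\sigma_l$ and $\bar\nu_l=-\tau_l$ (with the convention that a multiplier absent from the stationarity system is read as zero). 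Feeding these back into the $x$-part of the stationarity equation reproduces exactly \eqref{eq:WSt} under the identification $\mu_l:=\sigma_l\ge 0$ and $\nu_l:=\tau_l\ge 0$; this is the elimination-of-slacks step paralleling the passage to \eqref{eq:WSt_sur_SC}.

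Next I would exploit $I^{0-}(\bar x)=I^{-0}(\bar x)=\varnothing$ to localize the active sets. The multiplier $\sigma_l$ is present only where $G_l(\bar x)=\bar y_l$, and since $\bar y_l\ge 0$ an active index with $\bar y_l>0$ lies in $I^{+0}_\textup{CC}(\bar x,\bar y,\bar z)$, where $\bar\mu_l$ is absent so that the $y_l$-equation forces $\sigma_l=0$; an active index with $\bar y_l=0$ satisfies $G_l(\bar x)=0$, hence $l\in I^{0+}(\bar x)\cup I^{00}(\bar x)$. Thus $\mu_l$ is positive only on $I^{0+}(\bar x)\cup I^{00}(\bar x)$ and, symmetrically, $\nu_l$ only on $I^{+0}(\bar x)\cup I^{00}(\bar x)$, which already delivers the C$_\textup{CC}\Rightarrow$W assertion. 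For the M-part I would restrict to $l\in I^{00}(\bar x)$: if such an index lies in $I^{00}_\textup{CC}(\bar x,\bar y,\bar z)$, then the M$_\textup{CC}$-condition $\bar\mu_l\bar\nu_l=0\lor(\bar\mu_l>0\land\bar\nu_l>0)$ together with $\bar\mu_l=-\mu_l\le 0$ and $\bar\nu_l=-\nu_l\le 0$ excludes the positive branch and yields $\mu_l\nu_l=0$; if instead $l\in I^{0+}_\textup{CC}(\bar x,\bar y,\bar z)\cup I^{+0}_\textup{CC}(\bar x,\bar y,\bar z)$, then one of the inequalities $G_l(x)-y_l\le 0$, $H_l(x)-z_l\le 0$ is inactive, so one of $\mu_l,\nu_l$ already vanishes. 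In both situations $\mu_l\nu_l=0$, i.e.\ M-stationarity holds.

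The delicate point, which I expect to be the main obstacle, is the S-part. There S$_\textup{CC}$-stationarity only imposes $\bar\mu_l\ge 0$ and $\bar\nu_l\ge 0$ on $I^{00}_\textup{CC}(\bar x,\bar y,\bar z)$, and combined with $\bar\mu_l,\bar\nu_l\le 0$ this forces $\mu_l=\nu_l=0$ — but only for those biactive indices that are actually recorded in $I^{00}_\textup{CC}(\bar x,\bar y,\bar z)$. Hence the argument closes precisely when $I^{00}(\bar x)\subset I^{00}_\textup{CC}(\bar x,\bar y,\bar z)$, that is, when $\bar y_l=\bar z_l=0$ for every $l\in I^{00}(\bar x)$; this is the exact analogue of the identity $I^{GH}=I^{00}\cup I^{--}$ invoked in \eqref{eq:biactive_set_MPOC_SC}. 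Securing this inclusion is where feasibility of $(\bar x,\bar y,\bar z)$ together with $I^{0-}(\bar x)=I^{-0}(\bar x)=\varnothing$ must be used in full, since for $l\in I^{00}(\bar x)$ feasibility alone only yields $\bar y_l\bar z_l=0$; once the inclusion is available, the S-case follows verbatim as in the switching argument.
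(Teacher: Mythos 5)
Your handling of the W- and M-parts is correct and is precisely the slack-elimination argument the paper intends when it says to proceed as in \cref{prop:St_sur_SC}: the $y_l$- and $z_l$-components of the stationarity system yield $\bar\mu_l=-\sigma_l$ and $\bar\nu_l=-\tau_l$, the hypothesis $I^{0-}(\bar x)=I^{-0}(\bar x)=\varnothing$ localizes the surviving multipliers to $I^{0+}(\bar x)\cup I^{00}(\bar x)$ and $I^{+0}(\bar x)\cup I^{00}(\bar x)$, and the sign information $\bar\mu_l,\bar\nu_l\leq 0$ excludes the branch $\bar\mu_l>0\,\land\,\bar\nu_l>0$ of the M$_\textup{CC}$-condition.

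The S-part, however, is a genuine gap, and it is one you cannot close: the inclusion $I^{00}(\bar x)\subset I^{00}_\textup{CC}(\bar x,\bar y,\bar z)$ which you correctly isolate as the missing ingredient does not follow from feasibility and $I^{0-}(\bar x)=I^{-0}(\bar x)=\varnothing$, and without it the S-implication is false. Take $n:=1$, $m:=p:=0$, $q:=1$, $f(x):=-x$, $G_1(x):=H_1(x):=x$, so that \eqref{eq:MPOC} reads $\min\{-x\,|\,x\leq 0\}$; the point $\bar x:=0$ satisfies $I^{00}(\bar x)=\{1\}$, $I^{0-}(\bar x)=I^{-0}(\bar x)=\varnothing$, and it is not S-stationary since \eqref{eq:WSt} forces $\mu_1+\nu_1=1$. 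The point $(\bar x,\bar y,\bar z):=(0,1,0)$ is feasible to \eqref{eq:MPOC_CC} with $I^{00}_\textup{CC}(\bar x,\bar y,\bar z)=\varnothing$; the constraint $G_1(x)-y\leq 0$ is inactive there, and W$_\textup{CC}$-stationarity holds with multiplier $\tau:=1$ for $H_1(x)-z\leq 0$ and $\bar\nu_1:=-1$, so the C$_\textup{CC}$-, M$_\textup{CC}$-, and S$_\textup{CC}$-conditions are all satisfied vacuously. Hence $(0,1,0)$ is S$_\textup{CC}$-stationary although $\bar x$ is only M-stationary for \eqref{eq:MPOC}. The root cause is the very identity you point to: the second equality in \eqref{eq:biactive_set_MPOC_SC}, on which the paper's own proof of \cref{prop:St_sur_SC} relies, holds only as the inclusion $\subset$, and the switching analogue $(\bar x,\bar y,\bar z)=(0,0,-1)$ of the above point shows that its converse fails, so the S-assertions of both propositions are incorrect as stated. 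They become true under the additional hypothesis that $\bar y_l=\bar z_l=0$ holds for all $l\in I^{00}(\bar x)$ --- which is satisfied by the canonical slacks from \eqref{eq:feasibility_MPOC_CC}, but not by arbitrary feasible points. Your W- and M-arguments stand; the S-part should be stated and proved under that extra assumption rather than expected to follow from the given ones.
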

	
	In terms of \cref{prop:St_MPOC_to_St_CC,prop:St_sur_CC}, it seems to be promising to focus on
	the computation of stationary points associated with the complementarity-constrained program
	\eqref{eq:MPOC_CC} in order to find stationary points of \eqref{eq:MPOC}. Similarly to the
	switching-constrained approach described in \cref{sec:SC_ref}, we face the difficulty that the
	stationary points of the surrogate program \eqref{eq:MPOC_CC} do not always correspond to 
	stationary points of \eqref{eq:MPOC}. Thus, both approaches share the same qualitative properties.
	
	In order to solve \eqref{eq:MPOC_CC} computationally, it is possible to exploit e.g.\ problem-tailored
	SQP-methods, cf.\ \cite{FletcherLeyfferRalphScholtes2006,Leyffer2006}, or relaxation schemes, 
	see \cite{HoheiselKanzowSchwartz2013} for an overview. 
	Here, we focus on the well-known global relaxation approach of Scholtes, see \cite{Scholtes2001},
	which turned out to be numerical efficiency in comparison with other relaxation methods,
	see \cite{HoheiselKanzowSchwartz2013}.
	For some parameter $t\geq 0$, we consider the nonlinear surrogate problem
	\begin{equation}\label{eq:MPOC_CC_S}\tag{CC-MPOC$_\textup{S}(t)$}
		\begin{aligned}
			f(x)&\,\to\,\min\limits_{x,y,z}&&&\\
			g_i(x)&\,\leq\,0&\qquad&i\in\mathcal M&\\
			h_j(x)&\,=\,0&&j\in\mathcal P&\\
			G_l(x)-y_l&\,\leq\,0&&l\in\mathcal Q&\\
			H_l(x)-z_l&\,\leq\,0&&l\in\mathcal Q&\\
			y_l,z_l&\,\geq \,0&&l\in\mathcal Q&\\
			y_lz_l&\,\leq\,t&&l\in\mathcal Q&
		\end{aligned}
	\end{equation}
	which possesses $m+5q$ inequality and $p$ equality constraints.
	Noting that the feasible sets of \eqref{eq:MPOC_CC_S} form a nested family whose limit
	as $t\downarrow 0$ is the feasible set of \eqref{eq:MPOC_CC}, we can exploit the following
	strategy for the numerical solution of \eqref{eq:MPOC}.
	First, we choose a sequence $\{t_k\}_{k\in\N}$ of positive relaxation parameters converging
	to $0$. Afterwards, we use standard solvers from nonlinear programming to compute solutions
	associated with \hyperref[eq:MPOC_CC_S]{CC-MPOC$_\textup{S}(t_k)$}. The potential limit
	of this sequence is feasible to \eqref{eq:MPOC_CC} and, under some reasonable assumptions,
	a C$_\textup{CC}$-stationary point of this program, see \cite[Section~3.1]{HoheiselKanzowSchwartz2013}. 
	Due to \cref{prop:St_sur_CC}, this strategy is likely
	to produce W-stationary points of \eqref{eq:MPOC}. 
	
	At this point, we want to remark
	that the Scholtes-type relaxation approach from \cref{sec:SC_ref} seems to be numerically
	cheaper since the resulting relaxed surrogate program \eqref{eq:MPOC_SC_Scholtes} generally
	possesses less constraints than \eqref{eq:MPOC_CC_S}. On the other hand, due to the different
	role of the slack variables, the non-linearities in \eqref{eq:MPOC_CC_S} seem to be more
	balanced than in \eqref{eq:MPOC_SC_Scholtes}. A quantitative comparison of both methods is
	provided in \cref{sec:numerics}.

\section{Relaxation of or-constraints}\label{sec:relaxation}

In contrast to complementarity-, vanishing-, switching-, or cardinality-constrained 
programming where essential difficulties arise from the fact that the feasible set is almost
disconnected, or-constrained programs may behave geometrically well in this regard (apart
from pathological cases comprising e.g.\ optimization problems with gap domains, see \cref{sec:ex_gap_domain}).
However, we still need to deal with the combinatorial structure of the feasible set and the
irregularity at the or-kink. As we will see later, a direct treatment as described in
\Cref{sec:direct_treatment} struggles with this issue. Thus a nearby idea is 
a relaxation of this kink. Motivated by the computational results from 
\cite{HoheiselKanzowSchwartz2013,KanzowMehlitzSteck2019}, we perform a global relaxation
and smoothing of the kink using a Scholtes-type approach which is visualized in \cref{fig:Scholtes}.
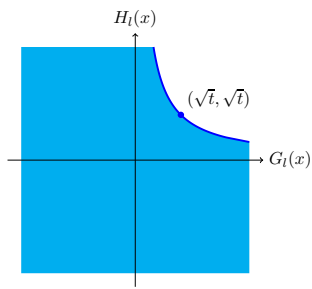
\begin{figure}[h]
\centering
\scalebox{.6}{\begin{tikzpicture}[domain=-2.5:2.5]
  \fill[color=cyan]  (-2.5,-2.5) -- (-2.5,2.5) foreach \x in {40,...,200}{  -- (\x/100,100/\x)} -- (2.5,0.4) -- (2.5,-2.5)
  		(-2.5,-2.5);
  \draw[->] (-2.8,0) -- (2.8,0) node[right] {$G_l(x)$};
  \draw[->] (0,-2.8) -- (0,2.8) node[above] {$H_l(x)$};
  \draw[very thick,color = blue]  (0.4,2.5) foreach \x in {40,...,200}{  -- (\x/100,100/\x)} -- (2.5,0.4);
   \fill[color=blue] (1,1) circle [radius=2pt];
  \node[above right] at (1,1){$(\sqrt t,\sqrt t)$};
 
\end{tikzpicture}}
\caption{Geometric illustration of the Scholtes-type global relaxation approach.}
\label{fig:Scholtes}
\end{figure}
Note that the popular relaxation approach due to Kanzow and Schwartz, 
see \cite{KanzowSchwartz2013,KanzowMehlitzSteck2019},
would only lead to a shift of the kink but preserves its difficult variational structure.
Thus, this idea does not reflect the general intention of this section which is why we do not
consider it here.

Let $t\geq 0$ be a relaxation parameter.
In order to perform the relaxation of our interest, we focus on two modified NCP-functions characterized below.
Note that any other (smoothed) or-compatible NCP-function can be used for this approach for the price of a 
potentially different underlying convergence analysis. 
\begin{itemize}
\item First, we will deal with the smoothed Fischer--Burmeister function 
$\varphi_\textup{FB}^t\colon\R^2\to\R$ given by
\[
	\forall (a,b)\in\R^2\colon\quad
	\varphi^t_\textup{FB}(a,b):=a+b-\sqrt{a^2+b^2+2t}.
\]
The smoothing of the Fischer--Burmeister function has been suggested by Kanzow in 
\cite{Kanzow1996} where $\varphi^t_\textup{FB}$ is used for the
numerical treatment of linear complementarity problems, see \cite{FukushimaLuoPang1998} as well.
The smoothing of NCP-functions in nonlinear complementarity-constrained programming is
the subject of interest in \cite{FacchineiJiangQi1999}.
\item For our second approach, we make use of $\varphi^t_\textup{KS}\colon\R^2\to\R$ given by
\[
	\forall (a,b)\in\R^2\colon\quad
	\varphi^t_\textup{KS}(a,b):=
	\varphi_\textup{KS}(a,b)-t
	=
	\begin{cases}
		ab-t		&\text{if }a+b\geq 0\\
		-\tfrac12(a^2+b^2+2t)	&\text{if }a+b<0.
	\end{cases}
\]
Clearly, this function is related to the NCP-function $\varphi_\textup{KS}$ from \cref{sec:direct_treatment}.
However, since $\varphi_\textup{KS}$ is already smooth, $\varphi^t_\textup{KS}$ cannot be referred to
as a smoothed NCP-function. Instead, $\varphi^t_\textup{KS}$ results from $\varphi_\textup{KS}$ by 
subtracting the offset $t$. In this way, the boundary of the associated zero sublevel set becomes smooth.
That is why we will refer to $\varphi^t_\textup{KS}$ as the
offset Kanzow--Schwartz function. Clearly, $\varphi^t_\textup{KS}$ is continuously
differentiable for each $t\geq 0$ since $\varphi_\textup{KS}$ possesses this property.
\end{itemize}

For some relaxation parameter $t\geq 0$ and a function 
$\varphi^t\in\{\varphi^t_\textup{FB},\varphi^t_\textup{KS}\}$, we now consider the relaxed surrogate
\begin{equation}\label{eq:MPOC_relaxed}\tag{P$(\varphi^t)$}
	\begin{aligned}
		f(x)&\,\rightarrow\,\min&&&\\
		g_i(x)&\,\leq\,0&\qquad&i\in\mathcal M&\\
		h_j(x)&\,=\,0&\qquad&j\in\mathcal P&\\
		\varphi^t(G_l(x),H_l(x))&\,\leq\,0&\qquad&l\in\mathcal Q&
	\end{aligned}
\end{equation}
whose feasible set will be denoted by $X(\varphi^t)$.
Noting that for each $t\geq 0$, one has
\[
	\forall (a,b)\in\R^2\colon\quad
	\varphi^t_\textup{FB}(a,b)\leq 0\,\Longleftrightarrow\,\varphi^t_\textup{KS}(a,b)\leq 0,
\]
the sets $X(\varphi^t_\textup{FB})$ and $X(\varphi^t_\textup{KS})$ are the same.
However, their particular \emph{nonlinear description} differs significantly.
In the lemma below, we summarize the geometrical properties of the family $\{X(\varphi^t)\}_{t\geq 0}$.
The proof of this result is rather standard and, thus, omitted.
\begin{lemma}\label{lem:geometry_relaxed_feasible_sets}
	For $\varphi^t\in\{\varphi^t_\textup{FB},\varphi^t_\textup{KS}\}$, the family
	$\{X(\varphi^t)\}_{t\geq 0}$ possesses the following properties:
	\begin{enumerate}
		\item $X(\varphi^0)=X$,
		\item $0\leq s\leq t\,\Longrightarrow\,X(\varphi^s)\subset X(\varphi^t)$, and
		\item $\bigcap_{t>0}X(\varphi^t)=X$.
	\end{enumerate}
\end{lemma}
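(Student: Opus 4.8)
The plan is to exploit that, among the constraints of \eqref{eq:MPOC_relaxed}, only the last $q$ depend on the relaxation parameter $t$, whereas the constraints involving $g$ and $h$ are fixed. Writing $O_t := \{(a,b) \in \R^2 \,|\, \varphi^t(a,b) \leq 0\}$ for the planar zero sublevel set, we have $X(\varphi^t) = \{x \in \R^n \,|\, g(x) \leq 0,\ h(x) = 0,\ \forall l \in \mathcal Q\colon (G_l(x),H_l(x)) \in O_t\}$, so that all three claims reduce to the corresponding statements about the family $\{O_t\}_{t \geq 0}$. Since the excerpt already records that $\varphi^t_\textup{FB}(a,b) \leq 0 \Leftrightarrow \varphi^t_\textup{KS}(a,b) \leq 0$, the set $O_t$ does not depend on which of the two functions is chosen, and it suffices to argue with whichever representation is more convenient at each step.

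For the first claim I would note that $\varphi^0_\textup{FB} = \varphi_\textup{FB}$ and $\varphi^0_\textup{KS} = \varphi_\textup{KS}$, both of which are or-compatible NCP-functions; as recorded in \cref{sec:direct_treatment}, any or-compatible NCP-function has zero sublevel set $O$ from \eqref{eq:def_O}, whence $O_0 = O$ and therefore $X(\varphi^0) = X$. For the second claim the decisive observation is that $t \mapsto \varphi^t(a,b)$ is nonincreasing for each fixed $(a,b)$: for $\varphi^t_\textup{FB}$ this holds because $t \mapsto \sqrt{a^2+b^2+2t}$ is increasing, and for $\varphi^t_\textup{KS}$ it is immediate from the subtracted offset. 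Consequently, for $0 \leq s \leq t$ one has $\varphi^t(a,b) \leq \varphi^s(a,b)$ pointwise, so $\varphi^s(a,b) \leq 0$ forces $\varphi^t(a,b) \leq 0$; this yields $O_s \subset O_t$ and hence $X(\varphi^s) \subset X(\varphi^t)$.

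For the third claim, the inclusion $X \subset \bigcap_{t>0} X(\varphi^t)$ follows at once from the first two claims. For the reverse inclusion I would pick $x \in \bigcap_{t>0} X(\varphi^t)$, so that $\varphi^t(G_l(x),H_l(x)) \leq 0$ for every $t > 0$ and every $l \in \mathcal Q$; since $t \mapsto \varphi^t(G_l(x),H_l(x))$ is continuous at $t = 0$ with limit $\varphi^0(G_l(x),H_l(x))$, passing to the limit $t \downarrow 0$ preserves the closed inequality and gives $(G_l(x),H_l(x)) \in O_0 = O$. As the $g$- and $h$-constraints are $t$-independent and already met, this shows $x \in X$. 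The argument is entirely routine, which is why the author omits it; the only points requiring a little care are getting the direction of monotonicity right (decreasing $\varphi^t$ produces growing $O_t$) and invoking continuity of $\varphi^t$ in $t$ so that the closed inequality survives the limit in the third claim.
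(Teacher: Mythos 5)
Your proof is correct, and it is precisely the routine argument the paper alludes to when it omits the proof as ``rather standard'': the identities $\varphi^0_\textup{FB}=\varphi_\textup{FB}$ and $\varphi^0_\textup{KS}=\varphi_\textup{KS}$ together with or-compatibility give the first claim, monotonicity of $t\mapsto\varphi^t(a,b)$ gives the nestedness, and continuity of $t\mapsto\varphi^t(a,b)$ at $t=0$ lets the closed inequality survive the limit for the intersection property. Nothing is missing.
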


Due to the above lemma, the following general strategy for the numerical treatment of
\eqref{eq:MPOC} is reasonable. For a sequence $\{t_k\}_{k\in\N}$ of positive relaxation
parameters converging to zero, we solve the relaxed surrogate 
\hyperref[eq:MPOC_relaxed]{P$(\varphi^{t_k}_\textup{FB})$} or
\hyperref[eq:MPOC_relaxed]{P$(\varphi^{t_k}_\textup{KS})$}.
Noting that these problems are standard nonlinear programs, it is reasonable to demand
that we are in position to compute associated KKT points.
If the obtained sequence of points possesses an accumulation point, then the latter is feasible to
\eqref{eq:MPOC}. In the following, we will discuss whether this accumulation point is stationary
for \eqref{eq:MPOC} as well. For that purpose, let us introduce the set
\[
	I^{\varphi^t}(x):=\{l\in\mathcal Q\,|\,\varphi^t(G_l(x),H_l(x))=0\}
\]
for a feasible point $x\in X(\varphi^t)$ of \eqref{eq:MPOC_relaxed}.
Clearly, $I^{\varphi^t}(x)$ comprises all indices corresponding to smoothed or-constraints
active at $x$.

\subsection{The smoothed Fischer--Burmeister function}

Here, we analyze the proposed relaxation scheme w.r.t.\ the 
smoothed Fischer--Burmeister function $\varphi^t_\textup{FB}$. 
First, we characterize its inherent convergence properties.
Afterwards, the regularity of the associated nonlinear
subproblems \hyperref[eq:MPOC_relaxed]{P$(\varphi^{t}_\textup{FB})$}
is discussed in more detail.
\begin{theorem}\label{thm:convergence_FB}
	Let $\{t_k\}_{k\in\N}$ be a sequence of positive relaxation parameters converging to zero.
	For each $k\in\N$, let $x_k\in X(\varphi^{t_k}_\textup{KS})$ be a KKT point of 
	\hyperref[eq:MPOC_relaxed]{P$(\varphi^{t_k}_\textup{FB})$}.
	Suppose that $\{x_k\}_{k\in\N}$ converges to some point $\bar x\in X$ where
	MPOC-MFCQ holds.
	Then, $\bar x$ is a W-stationary point of \eqref{eq:MPOC}.
\end{theorem}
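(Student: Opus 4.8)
The plan is to exploit the chain rule for $\varphi^{t_k}_\textup{FB}$ in the KKT system of \hyperref[eq:MPOC_relaxed]{P$(\varphi^{t_k}_\textup{FB})$} and then to run a standard blow-up argument driven by MPOC-MFCQ. First I would write down the KKT conditions at each $x_k$. Since $t_k>0$, the function $\varphi^{t_k}_\textup{FB}$ is continuously differentiable, so there exist multipliers $\lambda^k_i\geq 0$ ($i\in\mathcal M$), $\rho^k_j$ ($j\in\mathcal P$), and $\eta^k_l\geq 0$ ($l\in\mathcal Q$) obeying the usual complementarity relations with the $g_i$ and with the relaxed or-constraints. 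Abbreviating $r^k_l:=\sqrt{G_l(x_k)^2+H_l(x_k)^2+2t_k}$, the chain rule gives $\nabla_x\varphi^{t_k}_\textup{FB}(G_l(x_k),H_l(x_k))=(1-G_l(x_k)/r^k_l)\nabla G_l(x_k)+(1-H_l(x_k)/r^k_l)\nabla H_l(x_k)$, so on setting $\mu^k_l:=\eta^k_l(1-G_l(x_k)/r^k_l)$ and $\nu^k_l:=\eta^k_l(1-H_l(x_k)/r^k_l)$ the stationarity equation takes the or-constrained form, with these multipliers in place of the last two sums of \eqref{eq:WSt}. Because $r^k_l>\abs{G_l(x_k)}$ and $r^k_l>\abs{H_l(x_k)}$ for $t_k>0$, both prefactors lie in $(0,2)$, whence $\mu^k_l,\nu^k_l\geq 0$.

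Next I would determine, index by index, which multipliers survive in the limit. For an index $l$ whose limiting pair $(G_l(\bar x),H_l(\bar x))$ lies in $I^{--}(\bar x)\cup I^{-+}(\bar x)\cup I^{+-}(\bar x)\cup I^{-0}(\bar x)\cup I^{0-}(\bar x)$, a direct computation shows $\varphi^0_\textup{FB}(G_l(\bar x),H_l(\bar x))<0$; by continuity the relaxed constraint is inactive at $x_k$ for large $k$, so $\eta^k_l=0$ and hence $\mu^k_l=\nu^k_l=0$. Likewise $\lambda^k_i=0$ for $i\notin I^g(\bar x)$. Thus only indices in $\mathcal I(\bar x)=I^{0+}(\bar x)\cup I^{+0}(\bar x)\cup I^{00}(\bar x)$ contribute. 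The decisive asymptotics are that for $l\in I^{0+}(\bar x)$ one has $r^k_l\to H_l(\bar x)>0$, so the prefactor of $\nabla G_l$ tends to $1$ while that of $\nabla H_l$ tends to $0$; consequently the ratio $\nu^k_l/\mu^k_l$ tends to zero. The symmetric statement holds on $I^{+0}(\bar x)$, whereas on $I^{00}(\bar x)$ both prefactors merely stay in $(0,2)$.

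The crux is the boundedness of the sequences $\{\lambda^k_i\}$, $\{\rho^k_j\}$, $\{\mu^k_l\}$, $\{\nu^k_l\}$, which I would obtain by a blow-up argument against MPOC-MFCQ. Supposing the $\ell_1$-norm $\gamma_k$ of the surviving multipliers diverges, dividing the stationarity equation by $\gamma_k$ annihilates the $\nabla f$ term, and a subsequence of the normalized multipliers converges to a limit of $\ell_1$-norm one. Using the asymptotics above, the normalized quantities $\nu^k_l/\gamma_k\to 0$ for $l\in I^{0+}(\bar x)$ and $\mu^k_l/\gamma_k\to 0$ for $l\in I^{+0}(\bar x)$, so the vanishing components carry no mass and the surviving nonnegative coefficients still sum to one. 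Passing to the limit then produces a nontrivial positive-linear combination of the gradients forming the MPOC-MFCQ family that equals zero, contradicting their positive-linear independence; hence $\gamma_k$ is bounded. I expect this boundedness step, together with the verification that the vanishing multiplier components genuinely vanish, to be the main obstacle.

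Finally, with the multipliers bounded I would extract a convergent subsequence with limits $\lambda_i,\rho_j,\mu_l,\nu_l$, pass to the limit in the stationarity equation using continuity of all gradients, and invoke the asymptotics once more to conclude $\nu_l=0$ for $l\in I^{0+}(\bar x)$ and $\mu_l=0$ for $l\in I^{+0}(\bar x)$. The resulting identity is precisely \eqref{eq:WSt} with nonnegative $\mu_l$ ($l\in I^{0+}(\bar x)\cup I^{00}(\bar x)$) and $\nu_l$ ($l\in I^{+0}(\bar x)\cup I^{00}(\bar x)$), so $\bar x$ is W-stationary for \eqref{eq:MPOC}.
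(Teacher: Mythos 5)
Your proof is correct, and its overall architecture matches the paper's: chain rule on the KKT system of \hyperref[eq:MPOC_relaxed]{P$(\varphi^{t_k}_\textup{FB})$}, classification of indices so that only those in $\mathcal I(\bar x)$ survive, a normalization/blow-up argument against MPOC-MFCQ, and a final passage to the limit. There is, however, one genuine difference in the key boundedness step, and it is in your favor. The paper normalizes the \emph{raw} NCP multipliers $\xi^k_l$ (your $\eta^k_l$) together with $\lambda^k,\rho^k$; in that parametrization the limit equation carries the coefficients $\tilde\xi_l\alpha_l$ and $\tilde\xi_l\beta_l$ on the biactive indices $l\in I^{00}(\bar x)$, and positive-linear independence only yields $\tilde\xi_l\alpha_l=\tilde\xi_l\beta_l=0$, which does \emph{not} contradict $\tilde\xi_l\neq 0$ unless one knows that $\alpha_l$ or $\beta_l$ is positive. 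The paper secures this through an extra observation based on feasibility of $x_k$, namely $\alpha^k_l+\beta^k_l=1-\varphi^{t_k}_\textup{FB}(G_l(x_k),H_l(x_k))/r^k_l\geq 1$, so that $\alpha_l+\beta_l\geq 1$ in the limit. You instead normalize the \emph{products} $\mu^k_l=\eta^k_l\alpha^k_l$ and $\nu^k_l=\eta^k_l\beta^k_l$, which are exactly the coefficients appearing in the stationarity equation; consequently, after discarding the asymptotically vanishing components ($\nu^k_l/\gamma_k\to 0$ on $I^{0+}(\bar x)$, $\mu^k_l/\gamma_k\to 0$ on $I^{+0}(\bar x)$), positive-linear independence annihilates all remaining components while the $\ell_1$-norm of the limit is still one --- an immediate contradiction, with no feasibility estimate needed. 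What each approach buys: the paper's version additionally yields boundedness of the raw multipliers $\xi^k_l$, a marginally stronger by-product, whereas your version is leaner because boundedness of the products is all that the final limit requires (for $l\in I^{0+}(\bar x)$, boundedness of $\mu^k_l$ plus $\alpha^k_l\to 1$, $\beta^k_l\to 0$ already forces $\nu^k_l\to 0$, and symmetrically on $I^{+0}(\bar x)$, so the limit identity is precisely \eqref{eq:WSt}). Both arguments are complete; yours simply relocates the treatment of the biactive difficulty from an analytic estimate into the choice of normalization.
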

\begin{proof}
	Since $x_k$ is a KKT point of \hyperref[eq:MPOC_relaxed]{P$(\varphi^{t_k}_\textup{FB})$} for each $k\in\N$, we find
	multipliers $\lambda^k_i\geq 0$ ($i\in I^g(x_k)$), $\rho^k_j$ ($j\in\mathcal P$), and $\xi^k_l\geq 0$ 
	($l\in I^{\varphi^{t_k}_\textup{FB}}(x_k)$) such that
	\begin{align*}
		0
	&
		=\nabla f(x_k)
		+\sum\limits_{i\in I^g(x_k)}\lambda_i^k\nabla g_i(x_k)
		+\sum\limits_{j\in\mathcal P}\rho_j^k\nabla h_j(x_k)\\
	&\qquad
		+\sum\limits_{l\in I^{\varphi^{t_k}_\textup{FB}}(x_k)}
			\xi^k_l\left(\alpha^k_l\nabla G_l(x_k)
				+\beta^k_l\nabla H_l(x_k)\right)
	\end{align*}
	holds where we used
	\[
		\alpha^k_l:=1-\frac{G_l(x_k)}{\sqrt{G_l^2(x_k)+H_l^2(x_k)+2t_k}}\qquad
		\beta^k_l:=1-\frac{H_l(x_k)}{\sqrt{G_l^2(x_k)+H_l^2(x_k)+2t_k}}
	\]
	for all $k\in\N$ and all $l\in\mathcal Q$.
	Noting that $x_k\to\bar x$ holds while all involved mappings are continuous, we may assume
	\begin{align*}
		\forall k\in\N\colon\quad
		I^g(x_k)\subset I^g(\bar x),\qquad 
		I^{\varphi^{t_k}_\textup{FB}}(x_k)\subset \mathcal I(\bar x).
	\end{align*}
	For each $k\in\N$, we formally set $\lambda^k_l:=0$ for all $l\in I^g(\bar x)\setminus I^g(x_k)$
	as well as $\xi^k_l:=0$ for all 
	$l\in\mathcal I(\bar x)\setminus I^{\varphi^{t_k}_\textup{FB}}(x_k)$. This yields
	\begin{equation}\label{eq:KKT_FB_smoothed}
	\begin{aligned}
		0
	&
		=\nabla f(x_k)
		+\sum\limits_{i\in I^g(\bar x)}\lambda_i^k\nabla g_i(x_k)
		+\sum\limits_{j\in\mathcal P}\rho_j^k\nabla h_j(x_k)\\
	&\qquad
		+\sum\limits_{l\in \mathcal I(\bar x)}
			\xi^k_l\left(\alpha^k_l\nabla G_l(x_k)
				+\beta^k_l\nabla H_l(x_k)\right).
	\end{aligned}
	\end{equation}
	
	Note that $\alpha^k_l,\beta^k_l\in(0,2)$ holds for all $k\in\N$ and $l\in\mathcal Q$.
	This means that the sequences $\{\alpha^k_l\}_{k\in\N}$ and $\{\beta^k_l\}_{l\in\N}$ converge
	w.l.o.g.\ to $\alpha_l\in[0,2]$ and $\beta_l\in[0,2]$ for each $l\in \mathcal Q$, respectively.
	By construction, we have $\alpha_l=1$ and $\beta_l=0$ for all $l\in I^{0+}(\bar x)$ while
	$\alpha_l=0$ and $\beta_l=1$ hold true for all $l\in I^{+0}(\bar x)$.
	For each $l\in \mathcal Q$, we have
	\[
		\alpha^k_l+\beta^k_l
		=
		1+1-\frac{G_l(x_k)+H_l(x_k)}{\sqrt{G_l^2(x_k)+H_l^2(x_k)+2t_k}}
		=
		1-\frac{\varphi^t_\textup{FB}(G_l(x_k),H_l(x_k))}{\sqrt{G_l^2(x_k)+H_l^2(x_k)+2t_k}}
		\geq
		1
	\]
	by feasibility of $x_k$ for \hyperref[eq:MPOC_relaxed]{P$(\varphi^{t_k}_\textup{FB})$}.
	Taking the limit, we particularly have $\alpha_l+\beta_l\geq 1$ for all $l\in I^{00}(\bar x)$,
	which yields that $\alpha_l$ or $\beta_l$ is positive for $l\in I^{00}(\bar x)$.
	
	Let us assume that the sequence $\{(\lambda^k_{I^g(\bar x)},\rho^k,\xi_{\mathcal I(\bar x)}^k)\}_{k\in\N}$
	is unbounded. We set
	\[
		\forall k\in\N\colon\quad
		(\tilde\lambda^k_{I^g(\bar x)},\tilde\rho^k,\tilde\xi^k_{\mathcal I(\bar x)})
		:=
		\frac{(\lambda^k_{I^g(\bar x)},\rho^k,\xi_{\mathcal I(\bar x)}^k)}
			{\Vert(\lambda^k_{I^g(\bar x)},\rho^k,\xi_{\mathcal I(\bar x)}^k)\Vert_2}.
	\]
	Thus, $\{(\tilde\lambda^k_{I^g(\bar x)},\tilde\rho^k,\tilde\xi^k_{\mathcal I(\bar x)})\}_{k\in\N}$
	is bounded and converges w.l.o.g.\ to some nonvanishing 
	$(\tilde\lambda_{I^g(\bar x)},\tilde\rho,\tilde \xi_{\mathcal I(\bar x)})$.
	Dividing \eqref{eq:KKT_FB_smoothed} by $\Vert(\lambda^k_{I^g(\bar x)},\rho^k,\xi_{\mathcal I(\bar x)}^k)\Vert_2$
	and taking the limit $k\to\infty$ while respecting the properties of the limits $\alpha,\beta\in\R^q$ as well
	as the continuous differentiability of all involved mappings, we come up with
	\begin{align*}
		0
		&=
		\sum\limits_{i\in I^g(\bar x)}\tilde\lambda_i\nabla g_i(\bar x)
			+\sum\limits_{j\in\mathcal P}\tilde\rho_j\nabla h_j(\bar x)\\
		&\qquad
			+\sum\limits_{l\in I^{0+}(\bar x)}\tilde\xi_l\nabla G_l(\bar x)
			+\sum\limits_{l\in I^{+0}(\bar x)}\tilde\xi_l\nabla H_l(\bar x)
			+\sum\limits_{l\in I^{00}(\bar x)}\tilde\xi_l(\alpha_l\nabla G_l(\bar x)+\beta_l\nabla H_l(\bar x)).
	\end{align*}
	Noting that $\tilde\lambda_i\geq 0$ ($i\in I^g(\bar x)$) and $\tilde\xi_l\geq 0$ ($l\in\mathcal I(\bar x)$)
	holds true, the validity of MPOC-MFCQ yields $\tilde\lambda_i=0$ ($i\in I^g(\bar x)$),
	$\tilde\rho_j=0$ ($j\in\mathcal P$), $\tilde \xi_l=0$ ($l\in I^{0+}(\bar x)\cup I^{+0}(\bar x)$), and
	$\tilde\xi_l\alpha_l=\tilde\xi_l\beta_l=0$ ($l\in I^{00}(\bar x)$). Since $\alpha_l$ or $\beta_l$ is 
	positive for each $l\in I^{00}(\bar x)$, we already have $\tilde \xi_l=0$ ($l\in \mathcal I(\bar x)$).
	Summarizing these observations, the multiplier 
	$(\tilde\lambda_{I^g(\bar x)},\tilde\rho,\tilde\xi_{\mathcal I(\bar x)})$
	vanishes which is a contradiction.
	
	Thus, $\{(\lambda^k_{I^g(\bar x)},\rho^k,\xi_{\mathcal I(\bar x)}^k)\}_{k\in\N}$ is bounded and converges
	w.l.o.g.\ to some multiplier $(\lambda_{I^g(\bar x)},\rho,\xi_{\mathcal I(\bar x)})$.
	Therefore, taking the limit in \eqref{eq:KKT_FB_smoothed} yields 
	\begin{align*}
		0
		&=
		\nabla f(\bar x)+
		\sum\limits_{i\in I^g(\bar x)}\lambda_i\nabla g_i(\bar x)
			+\sum\limits_{j\in\mathcal P}\rho_j\nabla h_j(\bar x)\\
		&\qquad
			+\sum\limits_{l\in I^{0+}(\bar x)}\xi_l\nabla G_l(\bar x)
			+\sum\limits_{l\in I^{+0}(\bar x)}\xi_l\nabla H_l(\bar x)
			+\sum\limits_{l\in I^{00}(\bar x)}\xi_l(\alpha_l\nabla G_l(\bar x)+\beta_l\nabla H_l(\bar x)).
	\end{align*}
	with $\lambda_i\geq 0$ ($i\in I^g(\bar x)$) and $\xi_l\geq 0$ ($l\in\mathcal I(\bar x)$).
	Finally, we set 
	\[\begin{aligned}
		&\forall l\in I^{0+}(\bar x)\cup I^{00}(\bar x)\colon&
		&\mu_l:=
			\begin{cases}
				\xi_l			&l\in I^{0+}(\bar x),\\
				\xi_l\alpha_l	&l\in I^{00}(\bar x)
			\end{cases}&\\
		&\forall l\in I^{+0}(\bar x)\cup I^{00}(\bar x)\colon&
		&\nu_l:=
			\begin{cases}
				\xi_l			&l\in I^{+0}(\bar x),\\
				\xi_l\beta_l	&l\in I^{00}(\bar x)
			\end{cases}&
	\end{aligned}\]
	in order to see that $\bar x$ is a W-stationary point of \eqref{eq:MPOC}.
\end{proof}

At the first glance, the result from \cref{thm:convergence_FB} seems to be 
comparatively weak when taking into account similar investigations for other
classes of disjunctive programs. On the other hand, the fact that the proposed
method produces W-stationary points actually means that local minimizers
of \eqref{eq:MPOC} which \emph{are} only W-stationary can be found by this approach.
Apart from that, the variational geometry of \eqref{eq:MPOC} suggests that
the biactive situation is rather artificial at local minimizers of \eqref{eq:MPOC},
and whenever the biactive set is empty, then all introduced stationarity notions
for \eqref{eq:MPOC} coincide.
 
The following simple example confirms that the results of \cref{thm:convergence_FB}
cannot be strengthened.
\begin{example}\label{ex:limits_of_smoothing_approach}
	Let us consider the simple or-constrained program
	\begin{equation}\label{eq:simple_example}
		\begin{split}
			\tfrac12(x_1-1)^2+\tfrac12(x_2-1)^2&\,\to\,\min\\
			x_1\,\leq\,0\,\lor\,x_2&\,\leq\,0.
		\end{split}
	\end{equation}
	Its globally optimal solutions are given by $(1,0)$ and $(0,1)$ and these
	points are S-stationary. Furthermore, the point $\bar x:=(0,0)$ is W-stationary
	but no local minimizer of \eqref{eq:simple_example}. 
	
	Let us consider the associated program \hyperref[eq:MPOC_relaxed]{P$(\varphi^{t}_\textup{FB})$}
	for some $t\in (0,1]$. One can easily check that $x(t):=(\sqrt t,\sqrt t)$ is a
	KKT point of the latter. Taking the limit $t\downarrow 0$, we have $x(t)\to\bar x$.
	Note that MPOC-LICQ is valid at $\bar x$.
	This means that we cannot strengthen the assertion of \cref{thm:convergence_FB}.
\end{example}

In order to guarantee that the local minimizers associated with the nonlinear program
\hyperref[eq:MPOC_relaxed]{P$(\varphi^{t}_\textup{FB})$} are KKT points, a constraint
qualification needs to be imposed on the latter problem. As we will show below, the
validity of MPOC-MFCQ at some feasible point of \eqref{eq:MPOC} implies that standard 
MFCQ is valid in a neighborhood of this point w.r.t.\ 
\hyperref[eq:MPOC_relaxed]{P$(\varphi^{t}_\textup{FB})$}.
This way, the assumptions of \cref{thm:convergence_FB} turn out to be quite natural.
Particularly, the need for KKT points associated with 
\hyperref[eq:MPOC_relaxed]{P$(\varphi^{t}_\textup{FB})$}
is not restrictive since MPOC-MFCQ is demanded to hold at the associated limit point.
\begin{proposition}\label{prop:regularity_FB}
	Let $\bar x\in X$ be a feasible point of \eqref{eq:MPOC} where MPOC-MFCQ is valid.
	Then, there  exists a neighborhood $U\subset\R^n$ of $\bar x$ such that MFCQ holds
	for \hyperref[eq:MPOC_relaxed]{P$(\varphi^t_\textup{FB})$} at all points from
	$X(\varphi^t_\textup{FB})\cap U$ for all $t>0$.
\end{proposition}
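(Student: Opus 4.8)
The plan is to argue by contradiction, combining a normalization/compactness argument with the stability of positive-linear independence and with the explicit coefficient formulas already used in the proof of \cref{thm:convergence_FB}. Suppose the claim fails. Taking the neighborhoods $\mathbb B^{1/k}(\bar x)$, I obtain for each $k\in\N$ a parameter $t_k>0$ and a point $x_k\in X(\varphi^{t_k}_\textup{FB})\cap\mathbb B^{1/k}(\bar x)$ at which MFCQ is violated for \hyperref[eq:MPOC_relaxed]{P$(\varphi^{t_k}_\textup{FB})$}; in particular $x_k\to\bar x$. Violation of MFCQ means the active constraint gradients are positive-linearly dependent, so there are multipliers $\lambda^k_i\geq 0$ ($i\in I^g(x_k)$), $\rho^k_j$ ($j\in\mathcal P$), and $\xi^k_l\geq 0$ ($l\in I^{\varphi^{t_k}_\textup{FB}}(x_k)$), which I normalize so that the stacked vector has Euclidean norm one, satisfying
\[
	0=\sum\limits_{i\in I^g(x_k)}\lambda^k_i\nabla g_i(x_k)+\sum\limits_{j\in\mathcal P}\rho^k_j\nabla h_j(x_k)+\sum\limits_{l\in I^{\varphi^{t_k}_\textup{FB}}(x_k)}\xi^k_l\bigl(\alpha^k_l\nabla G_l(x_k)+\beta^k_l\nabla H_l(x_k)\bigr),
\]
with $\alpha^k_l,\beta^k_l$ the chain-rule coefficients from the proof of \cref{thm:convergence_FB}. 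Since there are only finitely many possible active sets, I pass to a subsequence along which $I^g(x_k)\equiv I^g_\star$ and $I^{\varphi^{t_k}_\textup{FB}}(x_k)\equiv I_\star$ are constant; the normalized multipliers then lie in a fixed Euclidean space.

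The first step is to localize these active sets. Passing to the limit in $g_i(x_k)=0$ gives $I^g_\star\subset I^g(\bar x)$. For the relaxed or-constraints I use that, for $t>0$, the equation $\varphi^t_\textup{FB}(G_l(x),H_l(x))=0$ is equivalent to $G_l(x)H_l(x)=t$ together with $G_l(x)+H_l(x)\geq 0$, hence forces $G_l(x)>0$ and $H_l(x)>0$ at every active index. Letting $k\to\infty$ yields $G_l(\bar x)\geq 0$ and $H_l(\bar x)\geq 0$, and since $\bar x$ is feasible for \eqref{eq:MPOC} this gives $I_\star\subset\mathcal I(\bar x)$.

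The crucial point — and the step I expect to be the main obstacle — is that the relaxation parameter must vanish along the sequence whenever any or-constraint is active. Indeed, for $l\in I_\star$ we have $t_k=G_l(x_k)H_l(x_k)\to G_l(\bar x)H_l(\bar x)=0$, since $G_l(\bar x)H_l(\bar x)=0$ for every $l\in\mathcal I(\bar x)$. With $t_k\to 0$ the coefficients can be evaluated in the limit: for $l\in I_\star\cap I^{0+}(\bar x)$ the denominator tends to $H_l(\bar x)>0$ and $(\alpha^k_l,\beta^k_l)\to(1,0)$, for $l\in I_\star\cap I^{+0}(\bar x)$ one gets $(\alpha^k_l,\beta^k_l)\to(0,1)$, and for $l\in I_\star\cap I^{00}(\bar x)$ the denominator degenerates, but the active-set identity $\alpha^k_l+\beta^k_l=1$ (valid wherever $\varphi^{t_k}_\textup{FB}=0$) survives in the limit, so any subsequential limits satisfy $\alpha_l+\beta_l=1$ with $\alpha_l,\beta_l\geq 0$. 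This is exactly what guarantees that each limiting constraint gradient lands inside the MPOC-MFCQ family, namely a pure $\nabla G_l(\bar x)$ for $l\in I^{0+}(\bar x)$, a pure $\nabla H_l(\bar x)$ for $l\in I^{+0}(\bar x)$, and a nonnegative combination of $\nabla G_l(\bar x)$ and $\nabla H_l(\bar x)$ for $l\in I^{00}(\bar x)$. (If $I_\star=\varnothing$, this analysis is vacuous and the conclusion below uses only the $g$- and $h$-gradients.)

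Finally, the normalized multipliers are bounded, hence converge along a further subsequence to some nonvanishing $(\lambda,\rho,\xi)$ with $\lambda_i,\xi_l\geq 0$. Passing to the limit in the displayed identity produces
\[
	0=\sum\limits_{i\in I^g_\star}\lambda_i\nabla g_i(\bar x)+\sum\limits_{j\in\mathcal P}\rho_j\nabla h_j(\bar x)+\sum\limits_{l\in I_\star\cap I^{0+}(\bar x)}\xi_l\nabla G_l(\bar x)+\sum\limits_{l\in I_\star\cap I^{+0}(\bar x)}\xi_l\nabla H_l(\bar x)+\sum\limits_{l\in I_\star\cap I^{00}(\bar x)}\xi_l\bigl(\alpha_l\nabla G_l(\bar x)+\beta_l\nabla H_l(\bar x)\bigr).
\]
All coefficients multiplying $G$- and $H$-gradients are nonnegative, so this is a nonnegative/free combination of the MPOC-MFCQ gradients equal to zero. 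Positive-linear independence forces $\lambda_i=0$, $\rho_j=0$, $\xi_l=0$ for $l\in I_\star\cap(I^{0+}(\bar x)\cup I^{+0}(\bar x))$, and $\xi_l\alpha_l=\xi_l\beta_l=0$ for $l\in I_\star\cap I^{00}(\bar x)$; summing the latter two and invoking $\alpha_l+\beta_l=1$ gives $\xi_l=0$ there as well. Thus $(\lambda,\rho,\xi)=0$, contradicting its unit norm, and the asserted uniform MFCQ follows.
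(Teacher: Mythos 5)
Your proof is correct, but it is organized differently from the paper's. The paper argues directly: it first invokes the stability of positive-linear independence under small perturbations (\cite[Lemma~2.2]{KanzowMehlitzSteck2019}) to fix a neighborhood $U$ on which the MPOC-MFCQ gradient family --- evaluated at the nearby point $x$ itself, not at $\bar x$ --- remains positive-linearly independent, and then shows that any vanishing nonnegative combination of the active gradients of \hyperref[eq:MPOC_relaxed]{P$(\varphi^t_\textup{FB})$} at $x$ decomposes over that perturbed family, forcing all multipliers to vanish. You instead argue by contradiction along a sequence $x_k\to\bar x$, normalize the dependence multipliers, and pass to the limit, so that positive-linear independence is needed only at $\bar x$; in effect you inline the compactness argument that underlies the cited stability lemma, which makes your proof self-contained at the cost of subsequence bookkeeping. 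Both proofs rest on the same core computations: active relaxed constraints localize into $\mathcal I(\bar x)$, and the chain-rule coefficients $(\alpha_l,\beta_l)$ are nonnegative with $\alpha_l+\beta_l\geq 1$ (equal to $1$ at active indices), so the limiting gradients land inside the MPOC-MFCQ family. One genuine merit of your write-up is the explicit observation that activity of a relaxed or-constraint forces $t_k=G_l(x_k)H_l(x_k)\to 0$; this is precisely what makes the conclusion uniform over all $t>0$, a point the paper handles only implicitly through its ``if $U$ is chosen sufficiently small'' approximations of $\alpha_l$ and $\beta_l$ at active indices.
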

\begin{proof}
	Invoking \cite[Lemma~2.2]{KanzowMehlitzSteck2019}, we find a neighborhood $U$ of $\bar x$ such that the union
	\begin{align*}
		\Bigl[\{\nabla g_i(x)\,|\,i\in I^g(\bar x)\}
			&\cup\{\nabla G_l(x)\,|\,l\in I^{0+}(\bar x)\cup I^{00}(\bar x)\}\\
			&\cup\{\nabla H_l(x)\,|\,l\in I^{+0}(\bar x)\cup I^{00}(\bar x)\}\Bigr]
			\cup\{\nabla h_j(x)\,|\,j\in\mathcal P\}	
	\end{align*}
	is positive-linearly independent for each $x\in U$ since MPOC-MFCQ is valid and all appearing functions
	are continuously differentiable. 
	
	Now, fix $t>0$ as well as $x\in X(\varphi^t_\textup{FB})\cap U$.
	If $U$ is small enough, we have $I^g(x)\subset I^g(\bar x)$ and $I^{\varphi^t_\textup{FB}}(x)\subset \mathcal I(\bar x)$ 
	by continuity of $g$, $G$, $H$, and $\varphi^t_\textup{FB}$. Let us set
	\[
		\alpha_l:=1-\frac{G_l(x)}{\sqrt{G_l^2(x)+H_l^2(x)+2t}}\qquad
		\beta_l:=1-\frac{H_l(x)}{\sqrt{G_l^2(x)+H_l^2(x)+2t}}
	\]
	for all $l\in\mathcal Q$. By construction, it holds $\alpha_l,\beta_l\in(0,2)$. 
	Furthermore, $\alpha_l+\beta_l\geq 1$ holds for all $l\in\mathcal Q$
	since $x$ is feasible to  \hyperref[eq:MPOC_relaxed]{P$(\varphi^t_\textup{FB})$}. 
	Hence, $\alpha_l$ or $\beta_l$ is positive
	for each $l\in\mathcal Q$. Clearly, we have
	\[
		\begin{aligned}
			&\forall l\in I^{0+}(\bar x)\cap I^{\varphi^t_\textup{FB}}(x)\colon\quad& &\alpha_l\neq 0\quad& &\beta_l\approx 0&\\
			&\forall l\in I^{+0}(\bar x)\cap I^{\varphi^t_\textup{FB}}(x)\colon\quad& &\alpha_l\approx 0\quad& &\beta_l\neq 0
		\end{aligned}
	\]
	if $U$ is chosen sufficiently small.
	Thus, we may assume that the union
	\begin{equation}\label{eq:pos_lin_ind_FB}
		\begin{aligned}
			\Bigl[	&\{\nabla g_i(x)\,|\,i\in I^g(\bar x)\}\\
					&\cup\{\alpha_l\nabla G_l(x)+\beta_l\nabla H_l(x)\,|\,l\in (I^{0+}(\bar x)\cup I^{+0}(\bar x))\cap I^{\varphi^t_\textup{FB}}(x)\}\\
					&\cup\{\nabla G_l(x)\,|\,l\in I^{00}(\bar x)\cap I^{\varphi^t_\textup{FB}}(x)\}\\
					&\cup\{\nabla H_l(x)\,|\,l\in I^{00}(\bar x)\cap I^{\varphi^t_\textup{FB}}(x)\}\Bigr]
			\cup\{\nabla h_j(x)\,|\,j\in\mathcal P\}
		\end{aligned}
	\end{equation}
	is positive-linearly independent.
	
	Now, suppose that there are multipliers $\lambda_i\geq 0$ ($i\in I^g(x)$), $\rho_j$ ($j\in\mathcal P$), and $\xi_l\geq 0$
	($l\in I^{\varphi^t_\textup{FB}}(x)$) such that
	\begin{align*}
		0
		&=
		\sum\limits_{i\in I^g(x)}\lambda_i\nabla g_i( x)
			+\sum\limits_{j\in\mathcal P}\rho_j\nabla h_j(x)
			+\sum\limits_{l\in I^{\varphi^t_\textup{FB}}(x)}
			\xi_l\left(\alpha_l\nabla G_l(x)+\beta_l\nabla H_l(x)\right)
	\end{align*}
	is valid. This is equivalent to
	 \begin{align*}
		0
		&=
		\sum\limits_{i\in I^g(x)}\lambda_i\nabla g_i( x)
			+\sum\limits_{j\in\mathcal P}\rho_j\nabla h_j(x)\\
		&\qquad
			+\sum\limits_{l\in (I^{0+}(\bar x)\cup I^{+0}(\bar x))\cap I^{\varphi^t_\textup{FB}}(x)}
			\xi_l\left(\alpha_l\nabla G_l(x)+\beta_l\nabla H_l(x)\right)\\
		&\qquad
			+\sum\limits_{l\in I^{00}(\bar x)\cap I^{\varphi^t_\textup{FB}}(x)}
				\xi_l\alpha_l\nabla G_l(x)
			+\sum\limits_{l\in I^{00}(\bar x)\cap I^{\varphi^t_\textup{FB}}(x)}\xi_l\beta_l\nabla H_l(x)
	\end{align*}
	since we have $I^{\varphi^t_\textup{FB}}(x)\subset \mathcal I(\bar x)$
	by choice of $U$. The positive-linear independence of the union in \eqref{eq:pos_lin_ind_FB} 
	and $I^g(x)\subset I^g(\bar x)$ yield
	$\lambda_i=0$ ($i\in I^g(x)$), $\rho_j=0$ ($j\in \mathcal P$), $\xi_l=0$ 
	($l\in(I^{0+}(\bar x)\cup I^{+0}(\bar x))\cap I^{\varphi^t_\textup{FB}}(x)$), 
	$\xi_l\alpha_l=0$ ($l\in I^{00}(\bar x)\cap I^{\varphi^t_\textup{FB}}(x)$), as well as
	$\xi_l\beta_l=0$ ($I^{00}(\bar x)\cap I^{\varphi^t_\textup{FB}}(x)$).
	Noting that $\alpha_l$ or $\beta_l$ is positive, we can infer $\xi_l=0$ for all indices
	$l\in I^{00}(\bar x)\cap I^{\varphi^t_\textup{FB}}(x)$,
	i.e.\ $\xi_l=0$ holds for all $l\in I^{\varphi^t_\textup{FB}}(x)$.
	Consequently, MFCQ holds for \hyperref[eq:MPOC_relaxed]{P$(\varphi^t_\textup{FB})$}
	at $x$.	
\end{proof}

\subsection{The offset Kanzow--Schwartz function}

Now, we investigate the proposed relaxation scheme in terms of the function $\varphi^t_\textup{KS}$.
Recalling that the problems \hyperref[eq:MPOC_relaxed]{P$(\varphi^{t}_\textup{FB})$} and
\hyperref[eq:MPOC_relaxed]{P$(\varphi^{t}_\textup{KS})$} possess the same feasible sets, it is reasonable to
believe that the qualitative properties of this method to not significantly differ from the relaxation
approach involving the smoothed Fischer--Burmeister function $\varphi^t_\textup{FB}$.
In order to check this, let us review \cref{ex:limits_of_smoothing_approach} first. 
Indeed, it is not difficult to see that $x(t):=(\sqrt t,\sqrt t)$ is a KKT point of the program
\hyperref[eq:MPOC_relaxed]{P$(\varphi^{t}_\textup{KS})$} associated with \eqref{eq:simple_example}
for each $t\in(0,1]$ again. Since we have $x(t)\to\bar x$ as $t\downarrow 0$ where $\bar x:=(0,0)$ is
a W-stationary point of \eqref{eq:simple_example} where MPOC-LICQ holds, the above conjecture seems to be
confirmed.

Fix $t>0$ and some $\tilde x\in X(\varphi^t_\textup{KS})$. Assume that $I^{\varphi^t_\textup{KS}}(\tilde x)$ is nonempty.
Then, for each $l\in I^{\varphi^t_\textup{KS}}(\tilde x)$, the mapping $x\mapsto \varphi^t_\textup{KS}(G_l(x),H_l(x))$
behaves bilinear w.r.t.\ $G_l(x)$ and $H_l(x)$ for arguments from a neighborhood of $\tilde x$ since $G_l$ and $H_l$
are continuous functions.
Particularly, the relaxed subproblem \hyperref[eq:MPOC_relaxed]{P$(\varphi^{t}_\textup{KS})$} corresponds to
a classical Scholtes-type relaxation locally around $\tilde x$ in the sense of the particular underlying 
nonlinear description of the relaxed feasible set.
That is why the proofs of the upcoming results, which characterize the convergence behavior of the suggested
relaxation scheme as well as the regularity of the associated nonlinear subproblems, directly follow by 
reprising the arguments used in \cite[Section~3.1]{HoheiselKanzowSchwartz2013} and
\cite[Section~3]{KanzowMehlitzSteck2019} in the context of MPCCs and MPSCs, respectively, while doing some
problem-tailored but nearby adjustments. 
\begin{theorem}\label{thm:convergence_KS}
	Let $\{t_k\}_{k\in\N}$ be a sequence of positive relaxation parameters converging to zero.
	For each $k\in\N$, let $x_k\in X(\varphi^{t_k}_\textup{KS})$ be a KKT point of 
	\hyperref[eq:MPOC_relaxed]{P$(\varphi^{t_k}_\textup{KS})$}.
	Suppose that $\{x_k\}_{k\in\N}$ converges to some point $\bar x\in X$ where
	MPOC-MFCQ holds.
	Then, $\bar x$ is a W-stationary point of \eqref{eq:MPOC}.
\end{theorem}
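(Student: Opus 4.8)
The plan is to follow the proof of \cref{thm:convergence_FB} verbatim wherever possible, changing only the multiplier bookkeeping to accommodate the bilinear structure of $\varphi^t_\textup{KS}$. First I would observe that on the active set $I^{\varphi^{t_k}_\textup{KS}}(x_k)$ the constraint reduces to $\varphi^{t_k}_\textup{KS}(G_l(x_k),H_l(x_k))=G_l(x_k)H_l(x_k)-t_k=0$; since $t_k>0$ this forces $G_l(x_k),H_l(x_k)>0$, so $x_k$ lies strictly inside the smooth bilinear branch, and the gradient of the $l$-th relaxed constraint equals $H_l(x_k)\nabla G_l(x_k)+G_l(x_k)\nabla H_l(x_k)$, exactly as in \cref{prop:MPOC_KS}. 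Writing the KKT system of \hyperref[eq:MPOC_relaxed]{P$(\varphi^{t_k}_\textup{KS})$} with multipliers $\lambda^k_i\geq 0$, $\rho^k_j$, and $\xi^k_l\geq 0$, I would introduce the \emph{split} multipliers $\mu^k_l:=\xi^k_l H_l(x_k)\geq 0$ and $\nu^k_l:=\xi^k_l G_l(x_k)\geq 0$. By continuity, $G_l(x_k),H_l(x_k)>0$, and $t_k\downarrow 0$, every active index satisfies $l\in\mathcal I(\bar x)$ for $k$ large, so after zero-padding the stationarity equation reads $0=\nabla f(x_k)+\sum_{i\in I^g(\bar x)}\lambda^k_i\nabla g_i(x_k)+\sum_{j\in\mathcal P}\rho^k_j\nabla h_j(x_k)+\sum_{l\in\mathcal I(\bar x)}\mu^k_l\nabla G_l(x_k)+\sum_{l\in\mathcal I(\bar x)}\nu^k_l\nabla H_l(x_k)$.

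The decisive step is to establish boundedness of the restricted vector $\{(\lambda^k,\rho^k,\mu^k,\nu^k)\}_{k\in\N}$ --- crucially of the \emph{split} multipliers, not of the raw $\xi^k$. Arguing by contradiction, I would divide the stationarity equation by the Euclidean norm $s_k$ of this vector and pass to a convergent subsequence of the normalized multipliers, with nonzero limit $(\tilde\lambda,\tilde\rho,\tilde\mu,\tilde\nu)$. The key point is that for $l\in I^{+0}(\bar x)$ one has $\mu^k_l=\nu^k_l\,H_l(x_k)/G_l(x_k)$ with $H_l(x_k)\to 0$ and $G_l(x_k)\to G_l(\bar x)>0$, whence $\tilde\mu_l=0$; symmetrically $\tilde\nu_l=0$ for $l\in I^{0+}(\bar x)$. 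Therefore the limiting relation involves only the gradient families occurring in MPOC-MFCQ, i.e.\ $\nabla g_i(\bar x)$ ($i\in I^g(\bar x)$), $\nabla h_j(\bar x)$ ($j\in\mathcal P$), $\nabla G_l(\bar x)$ ($l\in I^{0+}(\bar x)\cup I^{00}(\bar x)$) with nonnegative coefficients, and $\nabla H_l(\bar x)$ ($l\in I^{+0}(\bar x)\cup I^{00}(\bar x)$) with nonnegative coefficients. Positive-linear independence then forces $(\tilde\lambda,\tilde\rho,\tilde\mu,\tilde\nu)=0$, contradicting that its norm equals $1$.

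With boundedness in hand, I would extract a convergent subsequence of $\{(\lambda^k,\rho^k,\mu^k,\nu^k)\}$ and pass to the limit in the stationarity equation using $x_k\to\bar x$ and continuous differentiability of all data. Running the same ratio argument once more --- now without the normalization $s_k$ --- yields $\mu_l=0$ for $l\in I^{+0}(\bar x)$ and $\nu_l=0$ for $l\in I^{0+}(\bar x)$, so the surviving terms are precisely those of \eqref{eq:WSt}, and the remaining sign conditions $\lambda_i,\mu_l,\nu_l\geq 0$ are preserved in the limit. Hence $\bar x$ is W-stationary for \eqref{eq:MPOC}.

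The principal obstacle, and the sole genuine deviation from the proof of \cref{thm:convergence_FB}, is the degeneracy of the Kanzow--Schwartz constraint gradient at biactive indices: for $l\in I^{00}(\bar x)$ the combination $H_l(x_k)\nabla G_l(x_k)+G_l(x_k)\nabla H_l(x_k)$ collapses to zero as $x_k\to\bar x$. Normalizing the raw multipliers $\xi^k$ would annihilate the biactive gradients in the limit and leave MPOC-MFCQ without purchase; it is exactly the passage to the split multipliers $\mu^k_l,\nu^k_l$ that keeps $\nabla G_l(\bar x)$ and $\nabla H_l(\bar x)$ ($l\in I^{00}(\bar x)$) visible to the constraint qualification. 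This is the problem-tailored adjustment alluded to when reprising the Scholtes-type arguments of \cite[Section~3.1]{HoheiselKanzowSchwartz2013} and \cite[Section~3]{KanzowMehlitzSteck2019}.
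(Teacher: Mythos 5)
Your proof is correct and is essentially the argument the paper has in mind: the paper omits a detailed proof of this theorem, delegating it to the Scholtes-type convergence analyses of \cite[Section~3.1]{HoheiselKanzowSchwartz2013} and \cite[Section~3]{KanzowMehlitzSteck2019}, whose central device is precisely your passage from the raw multipliers $\xi^k_l$ to the split multipliers $\mu^k_l=\xi^k_l H_l(x_k)$, $\nu^k_l=\xi^k_l G_l(x_k)$ before normalizing and invoking MPOC-MFCQ. Your closing observation correctly pinpoints why this split is the ``problem-tailored but nearby adjustment'' the paper alludes to: in the smoothed Fischer--Burmeister proof the gradient coefficients satisfy $\alpha^k_l,\beta^k_l\in(0,2)$ with $\alpha^k_l+\beta^k_l\geq 1$, so normalization by the raw multipliers suffices there, whereas for $\varphi^{t}_\textup{KS}$ the coefficients $H_l(x_k),G_l(x_k)$ collapse simultaneously at biactive indices (and the raw $\xi^k_l$ may genuinely blow up, as in \cref{ex:limits_of_smoothing_approach}), which is exactly what your split-multiplier normalization repairs.
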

\begin{proposition}\label{prop:regularity_KS}
	Let $\bar x\in X$ be a feasible point of \eqref{eq:MPOC} where MPOC-MFCQ is valid.
	Then, there  exists a neighborhood $U\subset\R^n$ of $\bar x$ such that MFCQ holds
	for \hyperref[eq:MPOC_relaxed]{P$(\varphi^t_\textup{KS})$} at all points from
	$X(\varphi^t_\textup{KS})\cap U$ for all $t>0$.
\end{proposition}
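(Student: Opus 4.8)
The plan is to mirror the proof of \cref{prop:regularity_FB} almost verbatim, replacing the derivative data of $\varphi^t_\textup{FB}$ by that of the offset Kanzow--Schwartz function and exploiting that, for active indices, $\varphi^t_\textup{KS}$ reduces to its bilinear Scholtes-type branch. First I would invoke \cite[Lemma~2.2]{KanzowMehlitzSteck2019} to fix a neighborhood $U$ of $\bar x$ on which the MPOC-MFCQ gradient family stays positive-linearly independent, using that MPOC-MFCQ holds at $\bar x$ and all data are continuously differentiable. Then I would fix $t>0$ and $x\in X(\varphi^t_\textup{KS})\cap U$ and, shrinking $U$ if necessary, guarantee $I^g(x)\subset I^g(\bar x)$ together with $I^{\varphi^t_\textup{KS}}(x)\subset\mathcal I(\bar x)$ by continuity; here one uses that a feasible point of \eqref{eq:MPOC} never satisfies $G_l(x)>0\land H_l(x)>0$, so no index with both components positive survives near $\bar x$.

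The KS-specific ingredient, playing the role of the bounds $\alpha_l,\beta_l\in(0,2)$ and $\alpha_l+\beta_l\geq 1$ in the Fischer--Burmeister argument, is the following observation. For $t>0$ the branch $G_l(x)+H_l(x)<0$ of $\varphi^t_\textup{KS}$ cannot produce an active constraint, since there $\varphi^t_\textup{KS}=-\tfrac12(G_l^2(x)+H_l^2(x)+2t)<0$; hence every $l\in I^{\varphi^t_\textup{KS}}(x)$ satisfies $G_l(x)H_l(x)=t>0$ and $G_l(x)+H_l(x)\geq 0$, which forces $G_l(x)>0$ and $H_l(x)>0$. Consequently the gradient of each active constraint equals $H_l(x)\nabla G_l(x)+G_l(x)\nabla H_l(x)$ with \emph{both} coefficients strictly positive, i.e.\ locally the relaxed problem is a genuine bilinear Scholtes-type relaxation, as anticipated in the discussion preceding \cref{thm:convergence_KS}.

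I would then build the same augmented family as in \eqref{eq:pos_lin_ind_FB}: for $l\in(I^{0+}(\bar x)\cup I^{+0}(\bar x))\cap I^{\varphi^t_\textup{KS}}(x)$ I keep the combined gradient $H_l(x)\nabla G_l(x)+G_l(x)\nabla H_l(x)$ as one vector, whereas for $l\in I^{00}(\bar x)\cap I^{\varphi^t_\textup{KS}}(x)$ I list $\nabla G_l(x)$ and $\nabla H_l(x)$ separately. The reason for the splitting is that for $l\in I^{00}(\bar x)$ both $G_l(x)$ and $H_l(x)$ tend to $0$ as $x\to\bar x$, so the single combined gradient degenerates to zero and cannot by itself yield $\xi_l=0$; listing the two gradients separately circumvents this. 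Since $G_l(x)\to 0$, $H_l(x)\to H_l(\bar x)>0$ for $l\in I^{0+}(\bar x)$ (and symmetrically for $I^{+0}(\bar x)$), the combined vectors converge to the positive multiples $H_l(\bar x)\nabla G_l(\bar x)$, resp.\ $G_l(\bar x)\nabla H_l(\bar x)$, of the corresponding MPOC-MFCQ gradients; a positive rescaling preserves positive-linear independence, and a further application of \cite[Lemma~2.2]{KanzowMehlitzSteck2019} makes the augmented family positive-linearly independent on a possibly smaller $U$.

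Finally, I would run the MFCQ test: given nonnegative multipliers $\lambda_i$, $\xi_l$ and free $\rho_j$ with $\sum_{i}\lambda_i\nabla g_i(x)+\sum_j\rho_j\nabla h_j(x)+\sum_{l}\xi_l\bigl(H_l(x)\nabla G_l(x)+G_l(x)\nabla H_l(x)\bigr)=0$, I rewrite each $l\in I^{00}(\bar x)$ term as $(\xi_l H_l(x))\nabla G_l(x)+(\xi_l G_l(x))\nabla H_l(x)$ with nonnegative coefficients and apply positive-linear independence of the augmented family. This forces all $\lambda_i,\rho_j$ to vanish, $\xi_l=0$ for $l\in(I^{0+}(\bar x)\cup I^{+0}(\bar x))\cap I^{\varphi^t_\textup{KS}}(x)$, and $\xi_l H_l(x)=\xi_l G_l(x)=0$ for $l\in I^{00}(\bar x)\cap I^{\varphi^t_\textup{KS}}(x)$; since $G_l(x),H_l(x)>0$, the latter also gives $\xi_l=0$, so MFCQ holds at $x$. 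I expect the only genuinely delicate point to be the uniformity in $t$ of the perturbation argument for the augmented family: one must ensure that the closeness of the combined gradients to the MPOC-MFCQ vectors is controlled by $x\in U$ alone. This holds because activity together with $x\approx\bar x$ forces the small component ($G_l(x)$ for $l\in I^{0+}(\bar x)$, $H_l(x)$ for $l\in I^{+0}(\bar x)$) to be small irrespective of $t$, while the $I^{00}(\bar x)$ degeneration is absorbed by the splitting.
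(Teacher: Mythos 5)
Your proposal is correct and follows essentially the route the paper intends: the paper omits this proof, pointing to the local bilinearity of $\varphi^t_\textup{KS}$ at active points and to the Scholtes-type regularity arguments, and your argument realizes exactly this by mirroring the proof of \cref{prop:regularity_FB} — the observation that activity forces $G_l(x)H_l(x)=t$ with $G_l(x),H_l(x)>0$, the splitting of the $I^{00}(\bar x)$ gradients, and the perturbation stability from \cite[Lemma~2.2]{KanzowMehlitzSteck2019} are precisely the intended ingredients. The uniformity in $t$ is handled correctly, since the smallness of the perturbing coefficients is controlled by $x\in U$ alone.
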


Due to the above results, the qualitative properties of the proposed relaxation scheme do not depend on the
actual choice of the underlying function from $\{\varphi^t_\textup{FB},\varphi^t_\textup{KS}\}$.
However, we note that the nonlinearities hidden within these two functions are essentially different which
is why we want to investigate the quantitative properties of the respective resulting relaxation method in numerical
practice, see \cref{sec:numerics}.

\section{Numerical results}\label{sec:numerics}

In this section, we are going to compare the solution approaches discussed in 
\cref{sec:direct_treatment,sec:disjunctive_modification,sec:relaxation} by means
of different instances of or-constrained programming. 
Particularly, we are going to investigate the direct replacement of the or-constraints
by means of nonlinear inequalities induced by the Kanzow--Schwartz function $\varphi_\textup{KS}$,
see \cref{sec:direct_treatment}, the reformulation of the or-constrained program
as an MPSC or MPCC which then is treated with the aid of suitable Scholtes-type 
relaxation methods, see \cref{sec:disjunctive_modification}, and the direct
Scholtes-type relaxation approach based on the smoothed Fischer--Burmeister function
$\varphi_\textup{FB}^t$ and the offset Kanzow--Schwartz function $\varphi^t_\textup{KS}$
discussed in \cref{sec:relaxation}. 
The following problems, which are chosen from model classes with significant practical relevance, 
will serve as the benchmark for our numerical comparison:
\begin{enumerate}
	\item a nonlinear disjunctive program in the sense of Balas, see \cref{sec:ex_disjunctive_programming},
	\item an optimization problem where the domains of the underlying variables possess gaps, 
		see \cref{sec:ex_gap_domain}, and
	\item an or-constrained optimal control problem of the non-stationary heat equation in two
		spacial dimensions, see \cref{sec:ex_optimal_control}.
\end{enumerate}

For each of these examples, we first discuss the underlying problem structure. Afterwards, the 
numerical results are presented. In order to present a reasonable quantitative comparison of the
five discussed computational methods, we make use of performance profiles, see \cite{DolanMore2002},
based on computed function values. Note that we do not use time as an performance index here since
the transformation of the or-constrained program into an MPSC or MPCC comes for the cost of several
slack variables and additional constraints whose respective number depends linearly on the number of
original or-constraints. Thus, we can expect that the other approaches would clearly outrun these 
two methods w.r.t.\ computation time. 
In order to guarantee that the nonlinear surrogate programs which arise from the different solution 
methods we want to compare can be tackled with the same NLP solver, we decided only to use the smooth
Kanzow--Schwartz NCP-function for the direct reformulation of the or-constraints, cf.\ \cref{sec:direct_treatment}. 
Furthermore, we would like to mention that the use of other
relaxation methods for MPSCs and MPCCs, see \cite{KanzowMehlitzSteck2019,HoheiselKanzowSchwartz2013},
is possible when using the approach from \cref{sec:disjunctive_modification} but, as it turned out, 
does not yield results that differ significantly from those ones obtained via the Scholtes-type
relaxations. 

\subsection{Implementation}

The subsequently described numerical experiments were carried out using MATLAB R2018a.
For our comparison, we exploited the five algorithms stated below:
\begin{itemize}[leftmargin=8em]
	\item[\textbf{IPOPT}:] the IPOPT interior-point algorithm from \cite{WaechterBiegler2006}
		is applied to the NLP which results from \eqref{eq:MPOC} by reformulating all
		or-constraints with the aid of the smooth Kanzow--Schwartz function, see \cref{sec:direct_treatment},
	\item[\textbf{ScholtesSC}:] the Scholtes-type relaxation method which is applied to
		a switching-constrained reformulation \eqref{eq:MPOC_SC} of \eqref{eq:MPOC}, see \cref{sec:SC_ref},
	\item[\textbf{ScholtesCC}:] the relaxation method of Scholtes is applied to a
		complementarity-constrained reformulation \eqref{eq:MPOC_CC} of \eqref{eq:MPOC}, see \cref{sec:ref_CC},
	\item[\textbf{smoothedFB}:] the direct relaxation method from \cref{sec:relaxation}
		using the smoothed Fischer--Burmeister function, and
	\item[\textbf{offsetKS}:] the direct relaxation method from \cref{sec:relaxation}
		which exploits the offset Kanzow-Schwartz function.
\end{itemize}
Each of these algorithms is called via user-supplied gradients of objective and constraint functions.
We use the global stopping tolerance $10^{-4}$ for IPOPT's stopping tolerance in case of algorithm
\textbf{IPOPT} and for the maximum or-constraint violation
\[
	\max\{\max\{0,\min\{G_l(x),H_l(x)\}\}\,|\,l\in\mathcal Q\}
\]
in case of the other four methods.
In order to allow a comparison of the computational results, the relaxed subproblems arising in the
methods \textbf{ScholtesSC}, \textbf{ScholtesCC}, \textbf{smoothedFB}, and \textbf{offsetKS} are
solved with IPOPT as well. Here, the internal stopping tolerance of IPOPT is set to $10^{-6}$. 
For all these relaxation
approaches, the relaxation parameter is chosen to be $t_k:=0.01^k$ for each $k\in\mathbb N$, and the
algorithm is automatically terminated whenever $t_k$ drops below $10^{-8}$. 

Since we aim for a fair quantitative comparison of these five methods, we cannot rely on computation
time since by construction, the numerical effort of these approaches is essentially different.
Instead, we focus our attention on the comparison of computed function values (w.r.t.\ different starting 
points) with the globally optimal function value in order to classify 
the robustness of the suggested methods. In light of the fact that or-constrained
programs are likely to possess a substantial amount of local minimizers which are not globally optimal,
this is a reasonable approach. Here, we make use of the quantity
\begin{equation}\label{eq:performance_metric}
	Q_\delta(x^a_s):=
		\begin{cases}
			f(x^a_s)-f_\textup{min}+\delta	&\text{if }x^a_s\text{ is feasible within tolerance},\\
			+\infty &\text{otherwise}
		\end{cases}
\end{equation}
as the underlying metric for the resulting performance profiles. 
Above, we used $x^a_s$ in order to denote the final iterate of a run of algorithm $a\in\mathcal A$ with
\[
	\mathcal A:=\{\textbf{IPOPT},\textbf{ScholtesSC},\textbf{ScholtesCC},\textbf{smoothedFB},\textbf{offsetKS}\}
\]
for the starting point associated with the index $s\in\mathcal S$. If unknown, a reasonable approximate
of the global minimal function value $f_\textup{min}$ needs to be determined. Finally, $\delta\geq 0$
is an additional parameter which reduces sensitivity to numerical accuracy.
Using the metric $Q_\delta$ defined above, the resulting performance ratio is given by
\[
	\forall s\in\mathcal S\,\forall a\in\mathcal A\colon\quad
		r_{s,a}:=\frac{Q_\delta(x^a_s)}{\min\{Q_\delta(x^\alpha_s)\,|\,\alpha\in\mathcal A\}}.
\]
In our performance profiles, we plot the illustrative parts of the curves $\rho_a\colon[1,\infty)\to[0,1]$
given by
\[
	\forall \tau\in[1,\infty)\colon\quad
		\rho_a(\tau):=\frac{\abs{\{s\in\mathcal S\,|\,r_{s,a}\leq\tau\}}}{\abs{\mathcal S}}
\]
for each algorithm $a\in\mathcal A$ where $\abs{\cdot}$ denotes the cardinality of a set.
Thus, $\rho_a(\tau)$ may be interpreted as the probability that the final iterate produced by algorithm $a$ 
has a function value which is not worse than $\tau$-times the best computed function value w.r.t.\ all
algorithms from $\mathcal A$.

\subsection{Numerical experiments}

In this section, we present the numerical results associated with three prominent instances of
or-constrained programming. 

\subsubsection{Disjunctive programming}\label{sec:ex_disjunctive_programming}

Let us define sets $X_1,X_2\subset\R^3$ as stated below:
\begin{align*}
	X_1&:=\{x\in\R^3\,|\,x_1\geq 4,\,x_1+(x_2-2)^2+(x_3+2)^2\geq 5\},\\
	X_2&:=\{x\in\R^3\,|\,x_1^2+x_2^2\leq x_3,\,(x_1-1)^2+x_2^2+x_3\geq 1,\,x_2\leq 0\}.
\end{align*}
Now, we consider the nonlinear program
\begin{align*}
	(x_1-1)^2+(x_2-2)^2+(x_3+2)^2&\,\to\,\min\\
	x&\,\in\,X_1\cup X_2
\end{align*}
which can be interpreted as an instance of disjunctive programming in the sense of Balas, see \cite{Balas2018}.
One can easily check that its global minimizer is given by $\bar x:=(0,0,0)$ which possesses the minimal
function value $f_\textup{min}=9$. Note that this program possesses additional local minimizers which are not globally
optimal at all points from the set $\{(1,0,1)\}\cup\{(4,x_2,x_3)\in\R^3\,|\,(x_2-2)^2+(x_3+2)^2=1\}$. 
Introducing two slack variables $u,v\in\R$, we can equivalently restate the program of interest as the
or-constrained problem
\begin{equation}\label{eq:ex_disjunctive}
	\begin{split}
		(x_1-1)^2+(x_2-2)^2+(x_3+2)^2&\,\to\,\min\limits_{x,u,v}\\
			4-x_1-u&\,\leq\,0\\
			5-x_1-(x_2-2)^2-(x_3+2)^2-u&\,\leq\,0\\
			x_1^2+x_2^2-x_3-v&\,\leq\,0\\
			1-(x_1-1)^2-x_2^2-x_3-v&\,\leq\,0\\
			x_2-v&\,\leq\,0\\
			u\,\leq\,0\,\lor\,v&\,\leq\,0
	\end{split}
\end{equation}
which can be processed by our five algorithms. We use $500$ starting points whose $x$-components are
randomly chosen from $[0,4]$ while $u$ and $v$ are random scalars from $[-1,0]$.
The resulting performance profile for $\delta:=1$ can be found in \cref{fig:PerfProf_disjunctive}.
As we can see, the relaxation methods reliably compute the best function value and identify
the actual global minimizer in most of the cases. There is no significant difference between
the direct relaxation methods and those ones which are applied to surrogate reformulations of
\eqref{eq:ex_disjunctive}. All these algorithms do not outrun \textbf{IPOPT}
whose performance is also quite good since it finds the best function value in more than $85\%$
of the cases. This is, however, not surprising since problem \eqref{eq:ex_disjunctive} possesses just
one or-constraint.

\begin{figure}[h]\centering
\includegraphics[width=8.0cm]{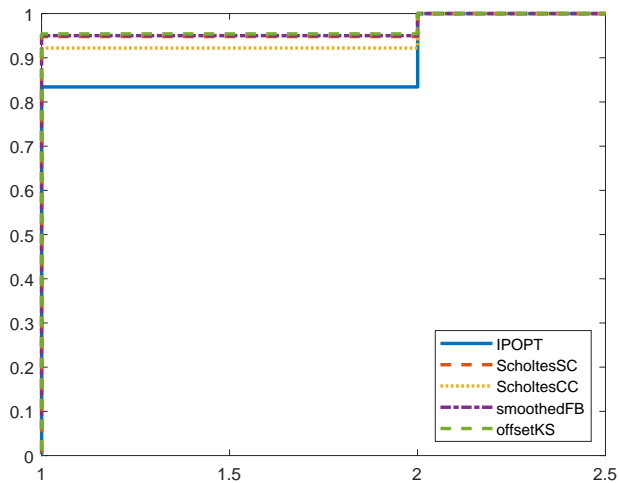}
\caption{Performance profile for the disjunctive program from \cref{sec:ex_disjunctive_programming}.}
\label{fig:PerfProf_disjunctive}
\end{figure}

\subsubsection{Optimization problems with gap domains}\label{sec:ex_gap_domain}

In contrast to standard box-constrained programming, it may happen that variables need to be chosen
such that they \emph{do not} belong to a critical interval. One may think of physical quantities needing to
stay away from given critical values or situations in production planning where a certain amount of products 
has to be bought \emph{or} sold. 
In order to model such constraints, we fix vectors $\ell,u\in\R^n$ satisfying $\ell<u$ and 
consider the system
\begin{equation}\label{eq:gap_domain}
	x_l\leq \ell_l\,\lor\,x_l\geq u_l\qquad l=1,\ldots,n.
\end{equation}
These constraints induce so-called \emph{gap domains} which are heavily disconnected. 
Here, the feasible set crumbles into $2^n$ branches.
Consequently, the underlying optimization problem is likely to possess several local minimizers which are
not globally optimal. We note that due to $\ell<u$, the biactive set $I^{00}(x)$ is empty for all
feasible points $x$ of the underlying or-constrained optimization problems. 
This means that all the introduced stationarity notions, see \cref{def:stationarities:MPOC}, coincide
for programs with or-constraints of type \eqref{eq:gap_domain}.

For a random vector $a\in[0,1]^{50}$ sorted in ascending order with at least $15$ entries which are greater than $0.5$, 
we consider the optimization problem 
\begin{equation}\label{eq:ex:gap_domain}
	\begin{aligned}
		\mathsmaller\sum\nolimits_{l=1}^{50}(x_l-a_l)^2&\,\to\,\min&&&\\
		\mathsmaller\sum\nolimits_{l=1}^{50}x_l&\,\leq\,15&&&\\
		x_l\,\leq\,0\,\lor\,x_l&\,\geq\,1&\qquad&l=1,\ldots,50
	\end{aligned}
\end{equation}
whose variables possess gap domains. By construction, its globally minimal function value is given by
\[
	f_\textup{min}=\mathsmaller\sum\nolimits_{l=1}^{35}a_l^2+\mathsmaller\sum\nolimits_{l=36}^{50}(1-a_l)^2.
\]

For our experiments, we challenged our algorithms with $500$ randomly chosen starting points from $[-1,2]^{50}$.
Two resulting performance profiles for $\delta:=1$ with differently scaled $\tau$-axes 
can be found in \cref{fig:PerfProf_gap_domain}.
\begin{figure}[h]\centering
\includegraphics[width=6.5cm]{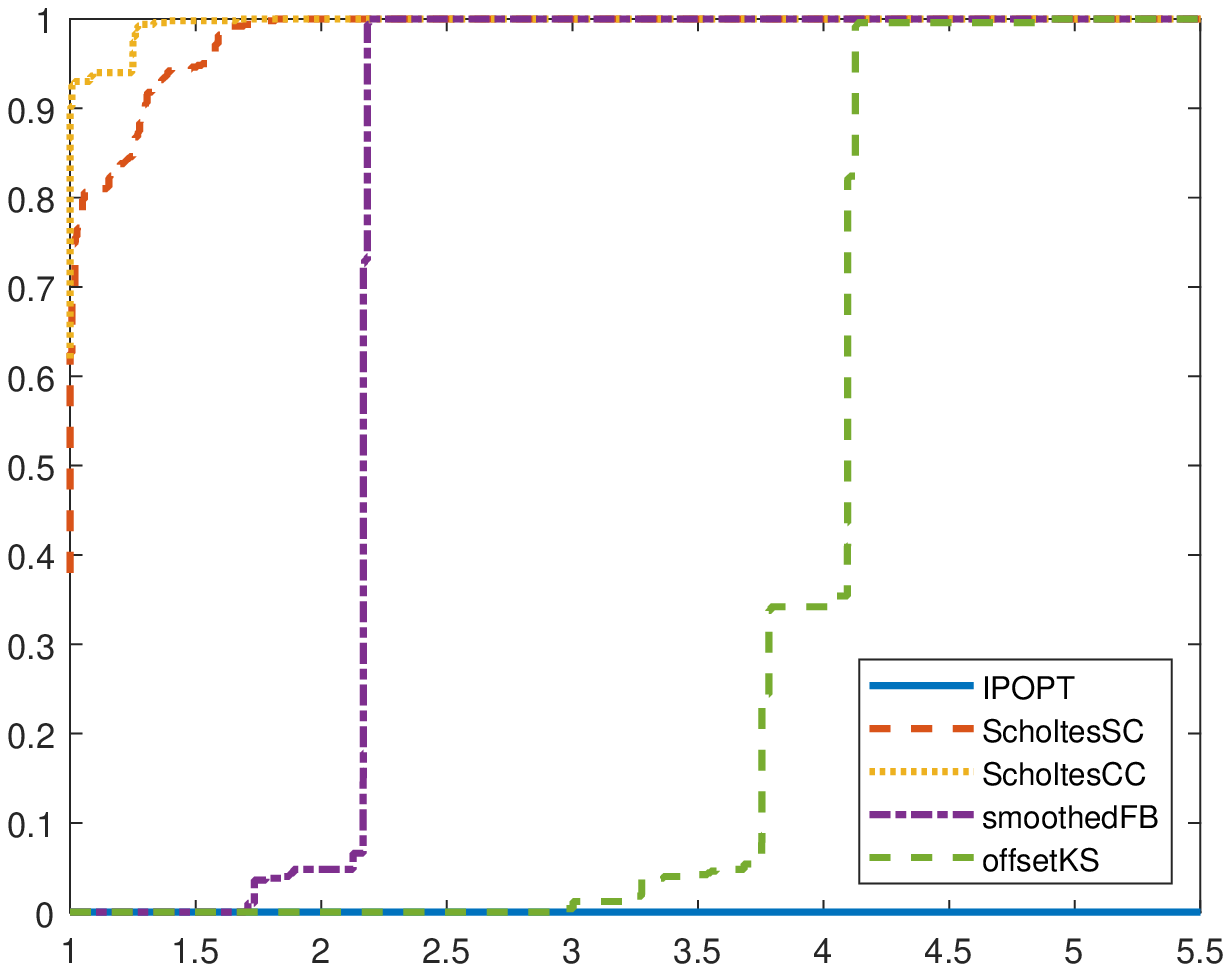}
\hspace{1em}
\includegraphics[width=6.5cm]{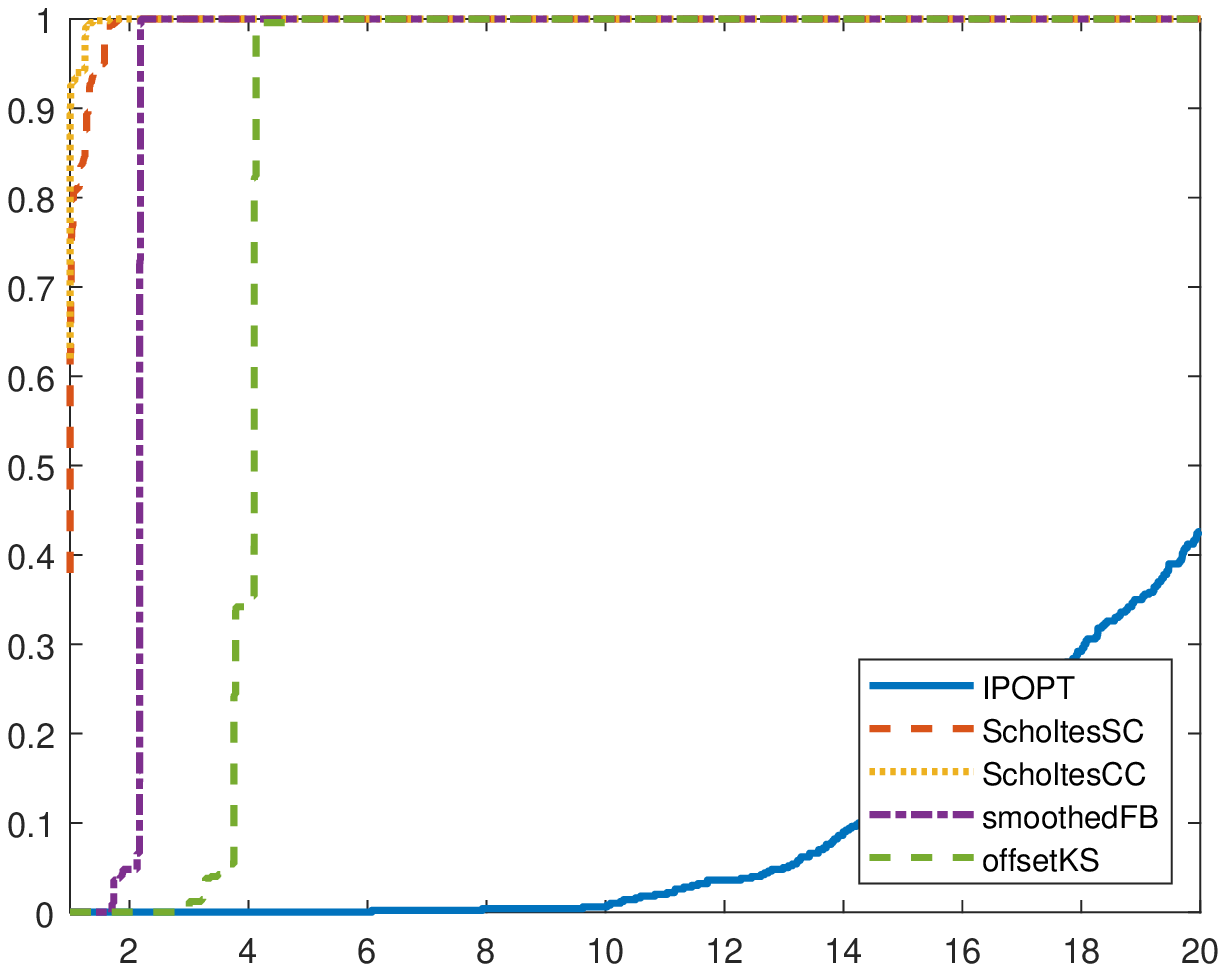}
\caption{Performance profiles for the optimization problem with gap domains from \cref{sec:ex_gap_domain}.}
\label{fig:PerfProf_gap_domain}
\end{figure}

Noting that the feasible set of \eqref{eq:ex:gap_domain} is disconnected, it is not surprising that the
relaxation methods clearly outrun \textbf{IPOPT} which generally gets stuck in the \emph{branch} of
\eqref{eq:ex:gap_domain} associated with the respective starting point. 
A direct relaxation of the program by means of
\textbf{smoothedFB} or \textbf{offsetKS} does not really solve this issue. For example, one can easily check
that the method \textbf{offsetKS} relaxes the gap-constraints to
\[
	x_1(1-x_1)\leq t\qquad l=1,\ldots,n
\]
which is equivalent to the or-constrained system
\[
	x_1\leq\tfrac12-\sqrt{\tfrac14-t}\,\lor\,x_1\geq\tfrac12+\sqrt{\tfrac14-t}
\]
for each $t\in[0,\tfrac14]$. That means that the method needs to handle highly disconnected feasible sets
for comparatively large relaxation parameters already. Similar effects can be observed for \textbf{smoothedFB}. Both direct
relaxation methods turn out to compute one particular locally optimal solution which is not the global
minimizer of \eqref{eq:ex:gap_domain} in most of the situations, respectively, and the performance
profiles underline this observation. 
It is not difficult to see that the MPSC- or MPCC-reformulations of \eqref{eq:ex:gap_domain} considered in 
\cref{sec:disjunctive_modification} still possess disconnected feasible sets.
However, the associated Scholtes-type relaxation methods \textbf{ScholtesSC} and \textbf{ScholtesCC}
seem to be much more stable in numerical practice since they
compute the actual global minimizer of \eqref{eq:ex:gap_domain} in most of the situations.
One reason for this behavior might be the presence of slack variables which allow some freedom when the nonlinear subproblems
are solved.

\subsubsection{Or-constrained optimal control}\label{sec:ex_optimal_control}

Motivated by the considerations in \cite[Section~6.2.2]{KanzowMehlitzSteck2019}, we want to 
study the optimal control of the non-stationary heat equation with the aid of two control
functions $u$ and $v$ which influence distinct parts $\Omega_u$ and $\Omega_v$ 
of the underlying domain $\Omega$ over time. Here, we 
additionally assume that at least one of the controls needs to be nonnegative at each
time instance. Such a constraint arises when due to technical restrictions, it is not
possible to cool $\Omega_u$ and $\Omega_v$ at the same time.
In terms of this paper, this means that the control functions need to satisfy an or-constraint in 
pointwise fashion. In order to guarantee that the associated optimal control problem possesses
an optimal solution, standard $L^2$-regularity of controls is generally not enough since this 
conservative regularity assumption does not guarantee the weak sequential closedness of the
underlying set of feasible controls. Following ideas from \cite{ClasonRundKunisch2017,ClasonDengMehlitzPruefert2019}
where pointwise switching  or complementarity constraints are considered, this issue can be
solved by considering controls from a first-order Sobolev space.

Fix $I:=(0,6)$, $\Omega:=(-1,1)^2$, and let $\Gamma$ be the boundary of $\Omega$. 
Furthermore, we set $\Omega_u:=(-1,0]\times(-1,1)$ and $\Omega_v:=(0,1)\times(-1,1)$. 
The non-stationary heat equation of our interest is given by
\begin{equation}\label{eq:state_equation}
	\begin{aligned}
		\partial_ty(t,\omega)-\Delta_\omega y(t,\omega)
			-\tfrac1{10}\chi_{\Omega_u}(\omega)u(t)-\tfrac1{10}\chi_{\Omega_v}(\omega)v(t)
			&\,=\,0&\qquad&\text{a.e.\ on }I\times\Omega&\\
		\vec{\mathbf n}(\omega)\cdot\nabla _\omega y(t,\omega)&\,=\,0
			&&\text{a.e.\ on }I\times\Gamma&\\
			y(0,\omega)&\,=\,0&&\text{a.e.\ on }\Omega
	\end{aligned}
\end{equation}
where $\chi_A\colon\Omega\to\R$ denotes the characteristic function of the measurable set $A\subseteq\Omega$
which equals $1$ on $A$ and vanishes on $\Omega\setminus A$. 
Following classical arguments, see \cite{Troeltzsch2009} where the Lebesgue and Sobolev spaces of interest are characterized as well, 
there exists a continuous linear mapping
$S\colon H^1(I)\times H^1(I)\to L^2(I;H^1(\Omega))$ which assigns to each pair $(u,v)$ of controls the
uniquely determined (weak) solution $y$ of \eqref{eq:state_equation}. 
Let us define the desired state $y_\textup{d}:=S(u_\textup{d},v_\textup{d})$ where 
$u_\textup{d},v_\textup{d}\in H^1(I)$ are given by
\begin{equation}\label{eq:desired_controls}
	\forall t\in I\colon\quad
	u_\textup{d}(t):=-20\,\sin(\pi t/3)\qquad
	v_\textup{d}(t):= 10\,\cos(\pi t/2).
\end{equation}
Now, we are in position to state the optimal control problem of our interest below:
\begin{equation}\label{eq:ex_optimal_control}
	\begin{aligned}
		\frac12\norm{S(u,v)-y_{\textup d}}{L^2(I;L^2(\Omega))}^2
		+\frac{\alpha}{2}\left(\norm{u}{L^2(I)}^2+\norm{v}{L^2(I)}^2\right)&&&\\
		+\frac{\beta}{2}\left(\norm{\partial_tu}{L^2(I)}^2+\norm{\partial_tv}{L^2(I)}^2\right)
			&\,\to\,\min\limits_{u,v}&&&\\
			u(t)\,\geq\,0\,\lor\,v(t)&\,\geq\,0&\qquad&\text{a.e.\ on }I.
	\end{aligned}
\end{equation}
For our experiments, we choose $\alpha:=10^{-6}$ and $\beta:=10^{-5}$.
Observe that the pair $(u_\textup{d},v_\textup{d})$ is not feasible to \eqref{eq:ex_optimal_control}
since these functions violate the pointwise or-constraint precisely for all those $t\in I$ satisfying $1< t< 3$.

In order to tackle \eqref{eq:ex_optimal_control} with the suggested algorithms, we first need to perform a
suitable discretization. Therefore, we tessellate the domain $\Omega$ with the aid of the function
\texttt{generateMesh} from MATLAB's PDE toolbox using the tolerance $h:=10^{-1}$. 
The time interval $I$ is subdivided into equidistant intervals of width $\vartheta:=5\cdot 10^{-2}$. 
Noting that state and control need to possess first-order Sobolev regularity, we use standard piecewise
affine and continuous finite elements for spatial and temporal discretization. This leads to a conforming
approximation of the $H^1$-norm in the objective functional of \eqref{eq:ex_optimal_control}.

This discretization results in a finite-dimensional program of type \eqref{eq:MPOC} which possesses $121$
simple or-constraints on the discretized control functions and a convex, quadratic objective functional.
Thus, this program can be decomposed into $2^{121}$ convex subproblems which indicates that the overall
discretized program possesses a huge amount of local minimizers. For our comparison of the suggested
numerical methods, we need to identify a reasonable candidate for a \emph{global} minimizer of the
optimal control problem. In order to do this, we use the following heuristic procedure adapted from 
\cite[Section~6.2.2]{KanzowMehlitzSteck2019} in order to
find a coarse upper bound for the globally minimal function value. First, we solve the program 
\emph{exactly} for a rough time discretization (we used $\vartheta:=0.375$) by computing the (global) minimizers
of all $2^{17}$ resulting convex subproblems and comparing the obtained solutions. Afterwards, we lift the
obtained global minimizer to the finer time grid using linear interpolation. The obtained point is used as a starting
point for our five algorithms. The best obtained outcome possesses a function value of $f_\textup{min}=0.2156$.
The resulting controls are depicted in \cref{fig:PerfProf_optimal_control}. They are closely related to
$u_\textup{d}$ and $v_\textup{d}$ from \eqref{eq:desired_controls} except for the time interval $(1,3)$ where the pointwise 
or-constraint from \eqref{eq:ex_optimal_control} leads to significant changes.
\begin{figure}[h]\centering
\includegraphics[width=6.5cm]{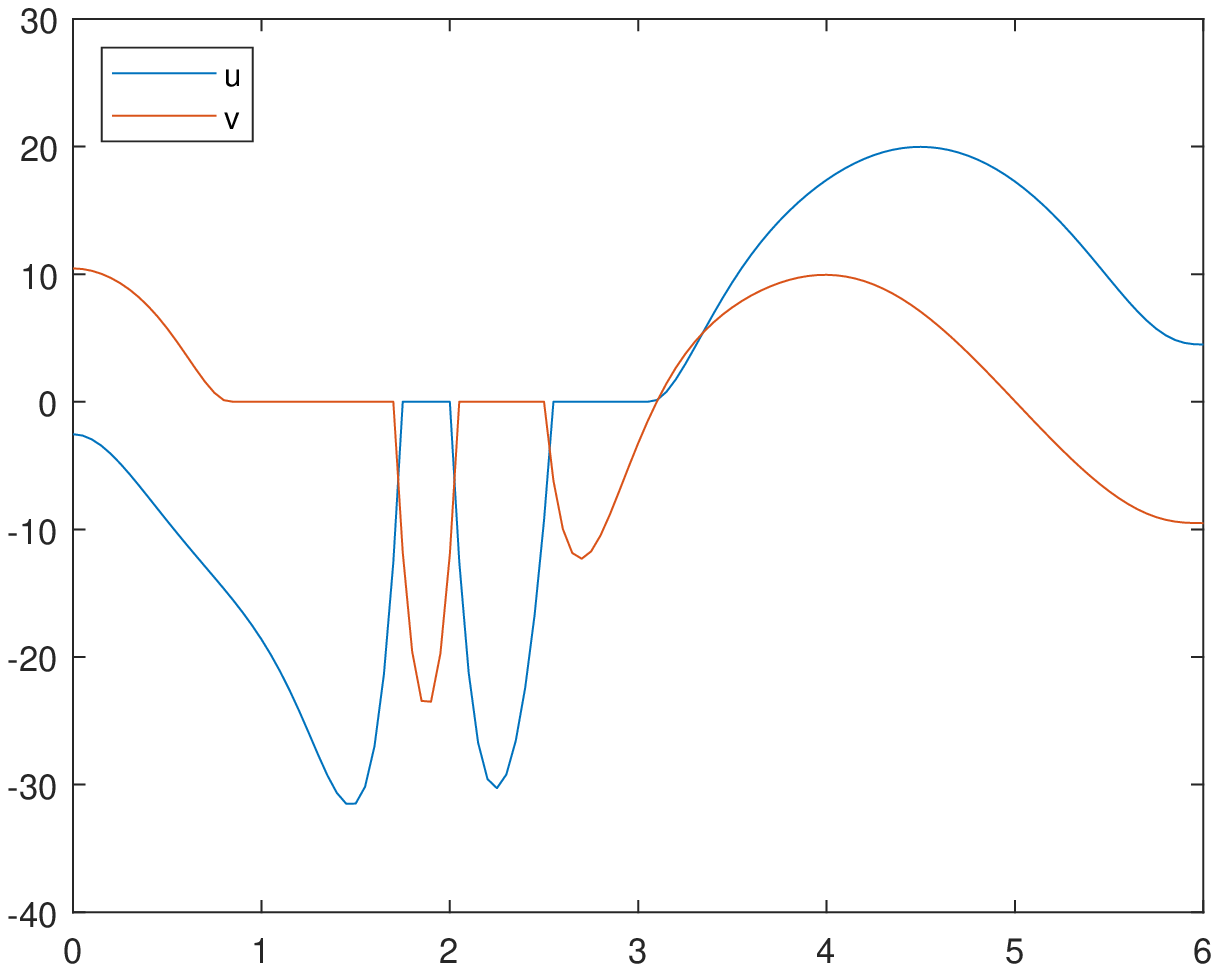}
\hspace{1em}
\includegraphics[width=6.5cm]{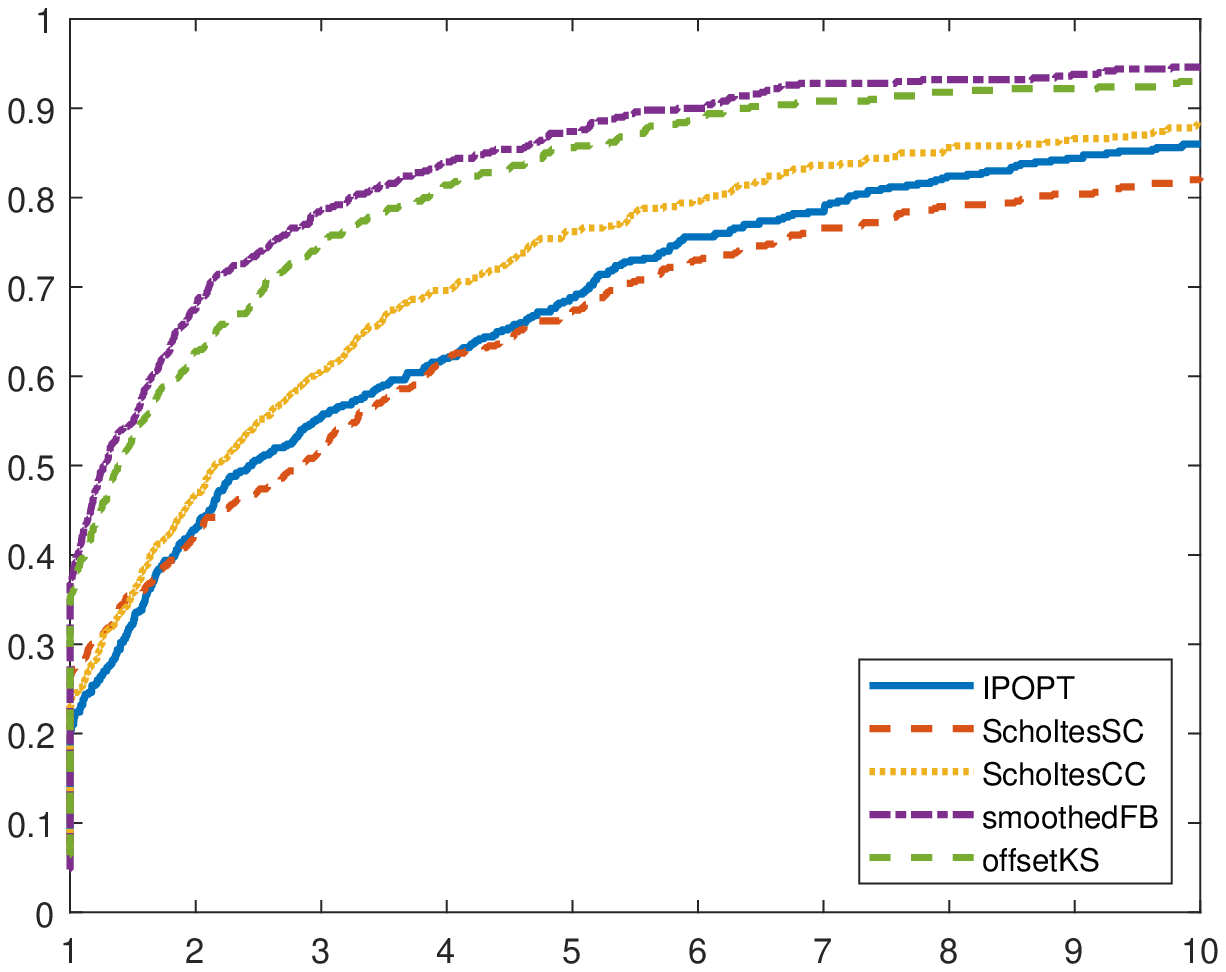}
\caption{Potential global minimizer (left) and performance profile (right) 
	for the or-constrained optimal control problem from \cref{sec:ex_optimal_control}.}
\label{fig:PerfProf_optimal_control}
\end{figure}

For our numerical experiment, we performed algorithmic runs for $500$ starting points which were
randomly chosen elementwise from $[-10,10]$. The resulting performance profile for $\delta:=0$
can be found in \cref{fig:PerfProf_optimal_control}. As it turns out, the Scholtes-type direct
relaxation methods \textbf{smoothedFB} and \textbf{offsetKS} perform much better than the other two
relaxation methods \textbf{ScholtesSC} and \textbf{ScholtesCC}. A reason for that might be that due to
the transformation to a switching- or complementarity-constrained program, the surrogate problems under
consideration in \textbf{ScholtesSC} and \textbf{ScholtesCC} possess lots of additional slack variables, 
namely $242$, and inequality constraints which makes them uncomfortably large. 
Due to the fact that the feasible set of the discretized or-constrained optimal control problem is strongly
connected, the direct method \textbf{IPOPT} keeps up at least with the latter relaxation methods. 
Another reason for that behavior might be the fact that MPOC-LICQ is valid at all feasible points of
the discretized optimal control problem which implies that GCQ holds at all feasible points of
the associated surrogate \hyperref[eq:MPOC_NCP]{\textup{MPOC}$(\varphi_\textup{KS})$}, 
see \cref{lem:MPOC_LICQ_yields_GCQ_KS}.
However, \textbf{IPOPT} cannot challenge the direct relaxation methods \textbf{smoothedFB} and \textbf{offsetKS} which
produce points with the best objective value much more frequently. Finally, it should be noted that
\textbf{smoothedFB} performs slightly better than \textbf{offsetKS}. This might be caused by the fact
that the relaxation via the smoothed Fischer--Burmeister function avoids bilinearities which appear
when the Kanzow--Schwartz function is used for that purpose.

\subsection{Summary}

Our examples indicate that the \emph{correct} choice for a numerical method which can be used to solve
or-constrained optimization problems heavily depends on the underlying problem structure.
In situations where only a few or-constraints need to be considered while the resulting feasible set is
still connected, there is no significant difference between all the suggested algorithms, 
see \cref{sec:ex_disjunctive_programming}.
On the other hand, optimization problems with gap domains should be transferred into surrogate
MPSCs or MPCCs which then should be solved by classical relaxation methods. This procedure turned out
to annihilate the disconnectedness of the underlying feasible set successfully, see \cref{sec:ex_gap_domain}. 
Finally, whenever a
huge number of simple or-constraints needs to be considered such that the underlying feasible set is
still connected, then a direct relaxation of the program seems to be the correct approach since this
approach regularizes the feasible set while not blowing up the number of variables and constraints, 
see \cref{sec:ex_optimal_control}.

\section{Concluding remarks}\label{sec:concluding_remarks}

In this paper, we discussed three different approaches for the numerical handling of or-constrained 
optimization problems with the aid of first-order methods from continuous optimization.
First, we investigated the reformulation of or-constraints as (smooth or nonsmooth) inequality
constraints using suitable NCP-functions. Second, we transferred the or-constrained optimization
problem into a switching- or complementarity-constrained surrogate problem which can be 
solved numerically with the aid of relaxation methods. The qualitative
properties of these transformations were discussed in detail. Third, a direct Scholtes-type relaxation of
optimization problems with or-constraints based on the smoothed Fischer--Burmeister function or the
offset Kanzow--Schwartz function was suggested and the convergence properties of this approach were
investigated. A numerical comparison of all these methods based on different models from or-constrained
optimization has been carried out. It turned out that the precise choice of the method heavily depends
on the structural properties of the underlying problem's feasible set. Generally, relaxation methods
perform much better than algorithms based on a simple replacement of the or-constraints using NCP-functions.

\end{document}